\title{High-dimensional analysis of ridge regression for non-identically distributed data with a variance profile}
\author{J\'er\'emie Bigot \& Issa-Mbenard Dabo \& Camille Male\\
Institut de math\'ematiques de Bordeaux \& CNRS (UMR 5251)\\
Universit\'e de Bordeaux, France}
\date{January 23, 2025}
\numberwithin{equation}{section}
\theoremstyle{plain}
\newtheorem{theorem}{Theorem}[section]
\newtheorem{proposition}{Proposition}[section]
\newtheorem{lemma}{Lemma}[section]
\newtheorem{corollary}{Corollary}[section]
\theoremstyle{definition}
\newtheorem{definition}{Definition}[section]
\newtheorem{hypothesis}{Assumption}[section]
\crefname{hypothesis}{Assumption}{Assumptions}
\Crefname{hypothesis}{Assumption}{Assumptions}
\crefname{theorem}{Theorem}{Theorems}
\Crefname{theorem}{Theorem}{Theorems}
\crefname{proposition}{Proposition}{Propositions}
\Crefname{proposition}{Proposition}{Propositions}
\crefname{lemma}{Lemma}{Lemmas}
\Crefname{lemma}{Lemma}{Lemmas}
\crefname{corollary}{Corollary}{Corollaries}
\Crefname{corollary}{Corollary}{Corollaries}
\crefname{definition}{Definition}{Definitions}
\Crefname{definition}{Definition}{Definitions}
\DeclareMathOperator{\Tr}{Tr}
\DeclareMathOperator{\diag}{diag}
\DeclareMathOperator*{\argmin}{arg\,min}
\newcommand{\RR}{\mathbb{R}}
\newcommand{\CC}{\mathbb{C}}
\newcommand{\EE}{\mathbb{E}}
\newcommand{\one}{\mathbf{1}}
\newcommand{\JB}[1]{{\color{black}#1}}
\newenvironment{keywords}{\par\medskip\noindent\textbf{Keywords:}\ }{\par\medskip}
\begin{document}

\maketitle

\begin{abstract}
High-dimensional linear regression has been thoroughly  studied in the context of independent and identically distributed data. We propose to investigate high-dimensional regression models for independent  but non-identically  distributed data. To this end, we suppose that the set of observed predictors (or features) is a random matrix with a variance profile and with dimensions growing at a proportional rate. Assuming a random effect model, we study  the predictive risk of the ridge estimator for linear regression with such a variance profile. In this setting, we provide deterministic equivalents of this risk and of the degree of freedom of the ridge estimator. For certain class of variance profiles, our work  highlights the emergence of the well-known double descent phenomenon in high-dimensional  regression for the minimum norm least-squares estimator when the  ridge regularization parameter goes to zero. We also exhibit variance profiles for which the  shape of this predictive risk   differs from double descent. The proofs of our results are based on tools from random matrix theory in the presence of a variance profile that have not been considered so far to study regression models. Numerical experiments are provided to show the accuracy  of the aforementioned deterministic equivalents on the computation of  the predictive risk of ridge regression.  We also investigate the similarities and differences that exist with the standard setting of independent and identically distributed data.
\end{abstract}

% REQUIRED
\begin{keywords}
High-dimensional linear ridge regression, Non-identically distributed data, Degrees of freedom, Double descent, Variance profile, Heteroscedasticity, Random Matrices, Deterministic equivalents
\end{keywords}

\section{Introduction}

High-dimensionality is a subject of interest in the field of statistics, especially in regression problems, driven by the advent of massive data sets. This context gives rise to unexpected phenomena and contradictions with established statistical heuristics when the dimension $p$ of the predictors is fixed and the number $n$ of observations tends to infinity. These phenomena particularly appear  in the context of linear regression. Indeed, as the sample size and dimension of acquired data increase, the study of this model is different from the classical framework. In the asymptotic regime where $\min(n,p) \to + \infty$ and $\frac{p}{n} \to c > 0$, one can notably mention the occurrence of the double descent phenomenon corresponding to estimators that both interpolate the data and show good generalization performances \cite{Belkin19}.  This phenomenon contradicts the consensus heuristic that, when a model becomes over-parameterized, then the predictive risk increases due to overfitting of the training data and the model is no longer capable of generalizing. This double descent phenomenon has been thoroughly studied in the case of high-dimensional linear regression with independent and identically distributed (iid) data using tools from RMT, see e.g.\ \cite{21-AOS2133,bach2023highdimensional,belkin2020two} and references therein. For instance, when the entries of $X_n$ are independent centered random
variables with variance 1, the empirical prediction risk $\hat{r}_0^{\text{test}}(X_n)$ (which we will define later) admits the following expression \cite{21-AOS2133} when $n \to \infty$ and $p/n \to c$:
\begin{equation}
\lim_{n \to \infty, \; p/n \to c} \hat{r}_0^{\text{test}}(X_n)
=
\begin{cases}
\sigma^2 \dfrac{c}{1-c} + \sigma^2 \mbox{ if }  c < 1,\\
\alpha^2 \left(1 - \dfrac{1}{c} \right) + \sigma^2 \dfrac{1}{c-1} \mbox{ if }  c > 1,
\end{cases}
\quad \text{almost surely}.
\label{eq:quasibistochastic}
\end{equation}
This result shows that the prediction risk increases with $p$ as long as $p/n < 1$, reaches a peak around the interpolation threshold $p = n$, and then decreases when $p > n$. This non-monotonic behavior is classically referred to as the double descent phenomenon and is illustrated in \cref{fig:intro}(a).

 In this asymptotic setting, using tools from random matrix theory (RMT), many authors have therefore focused on the consequences of high-dimensionality on linear regression, see e.g.\ \cite{17-AOS1549,bach2023highdimensional,21-AOS2133,liao2018dynamics} and references therein. In this paper, we focus on the linear regression model
\begin{equation}\label{eq:linmod}
y = x^\top \beta_\ast + \varepsilon, 
\end{equation}
where $x\in\mathbb{R}^p$ is a vector of random predictors, $\varepsilon \in \RR$ is a noise vector independent of $x$ with $\mathbb E [\varepsilon] = 0$ and $\mathbb E [\varepsilon^2 ] = \sigma^2 >0$, $\beta_\ast \in \RR^p$ is a vector of unknown parameters, and $y \in \RR$ is the observed response. Let us consider a training sample $(x_1,y_1),...,(x_n,y_n)$ following this linear model, that is for $i = 1,...,n$, one has that $y_i = x_i^\top \beta_\ast + \varepsilon_i$ where the noise term $\varepsilon_i$ is defined as above. Then one obtains from \Cref{eq:linmod} that 
$
Y_n = X_n \beta_\ast + \varepsilon_n,
$
where $X_n = (x_1 | \dots | x_n)^\top \in \mathbb{R}^{n\times p}$, $Y_n = (y_1 | \dots | y_n)^\top \in \mathbb{R}^n$ and $\varepsilon_n = (\varepsilon_1 | \dots | \varepsilon_n)^\top \in \mathbb{R}^n$.

Classically, the predictors are assumed to be iid data, meaning that the rows of the matrix $X_n$ are independent vectors sampled from the same probability distribution. In this paper, we propose to depart from this assumption by considering the setting where the rows of  $X_n$ are  independent but non-identically distributed. To this end, we suppose that $X_n$ is expressed in the following form 
$
X_n = \Upsilon_n  \circ Z_n,
$
where $\circ$ denotes the Hadamard product between two matrices, $Z_n = (Z_{ij})$ has iid centered entries with variance one, and $\Upsilon_n = (\gamma_{ij}^{(n)})$ is a deterministic matrix. To simplify the notation, we shall sometimes write $\gamma_{ij}^{(n)} = \gamma_{ij}$ and thus drop the (possibly) dependence of $\gamma_{ij}$ on $n$. The matrix $\Gamma_{n} = (\gamma_{ij}^{2}) \in \RR^{n \times p}$ governs the variance of the entries of $X_n$, and it is called a {\it variance profile}. 

In the RMT literature, there  exist various works on the  analysis of the spectrum of large random matrices with a variance profile \cite{SHL96,HLJ07,ACDGM17,bigotmale,erdos2012,Ajanki2017}. In particular, we rely on results from \cite{HLJ07} to obtain a  deterministic equivalent of the spectral distribution of a data matrix $X_n = \Upsilon_n  \circ Z_n$ with a variance profile matrix $\Gamma_n$. The motivation for studying linear regression using such a variance profile is to consider the setting  where one has $n$ independent  pairs of observations $(Y_i,X_i)_{1 \leq i \leq n}$ (with $X_i = (X_{ij})_{1 \leq j \leq p}$) that are not necessarily identically distributed. Note that in the standard setting of iid data, one has that
$
\gamma_{ij}  = \gamma_{j} \quad \mbox{for all} \quad 1 \leq i \leq n, \mbox{ and } 1 \leq j \leq p.
$

The main goal of this paper is then to understand how assuming such a variance profile for  $X_n$ influences the statistical properties of ridge regression in the linear model \cref{eq:linmod} when compared to the standard assumption of iid observations. In this setting, our approach also allows to analyze the performances of the minimum norm least-squares estimator when the  ridge regularization parameter goes to zero.

The deterministic equivalents derived in this paper are not only of interest in the RMT context, but they also provide approximations of key quantities in high dimensions, namely the prediction error and the effective model complexity measured by its degrees of freedom. 
In the classical iid setting, such equivalents are typically used as a means to an end, enabling one to highlight regimes where ridge regression generalizes well or poorly,  characterize optimal regularization parameter, and  study ridge regression across the under and over-parameterized regimes. 
A main objective of the present work is to provide analogous consequences in the non-iid setting induced by a variance profile, and to identify which phenomena remain universal (e.g., the derivation of an optimal regularisation parameter in ridge regression) from those that depend on the variance profile  (e.g., the emergence of a multiple-descent phenomenon beyond the classical double descent one).

 As an application, our  methodology also offers a novel tool for analyzing data that arises from  a mixture model that is a classical framework in machine learning, when the data  are sampled from multiple underlying subpopulations or classes.  Indeed, consider latent class variables $C_1, \ldots, C_K$, which determine the class membership of each  feature vector $x_i$. Formally, for each $i$ and each class $k \in \{1, \ldots, K\}$, we assume that the latent class variable $C_i$ follows a categorical distribution:
$
\forall 1 \leq i \leq n, \quad \mathbb{P}(C_i = k) = \pi_k.
$
Within each class, the random predictor $x_i$  is then assumed to follow a specific covariance structure. Conditional on $\{C_i = k\}$, we model the predictors as
$
x_i = S_k^{1/2} x_i',
$
where $S_k = \text{diag}(s_{k,1}^2, \ldots, s_{k,p}^2)$ is a diagonal matrix that characterizes the covariance structure of the predictors within class $k$. The variances $s_{k,j}^2$ of the predictors vary across classes, reflecting potential heterogeneity in the data.  Then, given the class labels $C_1, \ldots, C_n$, the resulting matrix of predictors $X_n$ exhibits a variance profile governed by the matrix
$
\Gamma_{n} = (s_{C_i,j}^2) \in \mathbb{R}^{n \times p},
$
where each entry $s_{C_i,j}^2$ corresponds to the variance of the $j$-th feature for the $i$-th  feature vector, determined by its class membership $C_i$.  Hence, our approach allows to investigate the double descent phenomenon in the context of  high-dimensional ridge regression when the data follow a mixture model, an aspect that has not been previously explored in the literature,  to the best of our knowledge.

We consider the  high-dimensional context (with $p$ growing to infinity at a rate proportional to $n$) for which the   least squares estimator is possibly not uniquely defined.  Thus, we focus our analysis on the ridge regression estimator that is the minimizer of the following loss function
$
\hat{\theta}_{\lambda} = \argmin_{\theta \in \RR^p} \frac{1}{n} \| Y_n - X_n \theta\| ^2 + \lambda \| \theta \|^2,
$
for some regularization parameter $\lambda > 0$.  Regardless of the ratio between $n$ and $p$, this estimator has the following explicit expression
\begin{eqnarray}\label{eq:theta}
 \hat{\theta}_{\lambda} = (X^\top_n X_n + n \lambda I_p)^{-1} X_n^\top Y_n = X_n^\top(X_n X_n^\top + n\lambda I_n)^{-1} Y_n.   
\end{eqnarray}
Our analysis also includes the study of the minimum least-square estimator defined as
$$
\hat{\theta} = \argmin_{\theta \in \RR^p} \left\{ \| \theta\| \; : \; \theta \mbox { minimizes } \frac{1}{n} \|Y_n - X_n \theta\|^2 \right\},
$$
to which the  ridge regression estimator converges when $\lambda$ tends to zero. The estimator $\hat{\theta}$ is also known to be the solution found by gradient descent when initialized to zero, see e.g.\ \cite{21-AOS2133}[Proposition 1].

To study the statistical performances of the ridge regression estimator, we analyze its empirical predictive risk, denoted $\hat{r}_\lambda^{test} (X_n)$, and its train risk, denoted $\hat{r}_\lambda^{train} (X_n)$, defined as 
\begin{equation}
 \hat{r}_\lambda^{test} (X_n) = \EE[(\tilde{y} - \tilde{x}^\top \hat{\theta}_\lambda)^2 | X_n], \label{eq:defrisk}
\quad \mbox{ and } \quad
 \hat{r}_\lambda^{train} (X_n) = \frac{1}{n}\mathbb{E} [\parallel Y_n -  X_n \hat{\theta}_{\lambda} \parallel^2|X_n],
\end{equation}
 where  $(\tilde{y},\tilde{x}) \in \RR \times \RR^p $ is independent from $(Y_n, X_n)$ and satisfies
 $
\tilde{y}= \tilde{x}^\top \beta_\ast + \tilde{\varepsilon}, \mbox{ with } \EE[ \tilde{\varepsilon}] = 0, \;\EE[ \tilde{\varepsilon}^2] = \sigma^2 \ \mathrm{and} \ \widetilde x ,\  \tilde \varepsilon \  \mathrm{independent}.\label{eq:linmodtilde} 
 $
In the above formula, $\tilde{x} = \widetilde{S}_p^{1/2}  \widetilde{z}$ with $ \widetilde{z} \in \RR^p$  a random vector with iid centered entries and variance one, and $\widetilde{S}_p = \EE [\tilde{x} \tilde{x}^\top  ]= \diag (\tilde{\gamma}_1^2,\ldots,\tilde{\gamma}_p^2)$
denotes the variance profile of $\tilde{x}$. Note that the risk $\hat{r}_\lambda^{test} (X_n)$ is conditioned on the predictors $X_n$, and it is thus a random variable.

Following \cite{17-AOS1549}, we  focus on a random-effect hypothesis by assuming that the components of the vector $\beta_\ast$ are drawn independently at random. As argued in \cite{17-AOS1549}, this assumption corresponds to an average case analysis over a set of dense regression coefficients as opposed to the ``sparsity hypothesis'' \cite{booksparsity} or the ``manifold hypothesis'' \cite{liu2023linear} that are other popular assumptions in high-dimensional linear regression. The following assumptions are made throughout the paper, and they are used to derive deterministic equivalents of the training and predictive risks.
\begin{hypothesis} \label[hypothesis]{hyp:theta}
The vector $ \beta_\ast$ of regression coefficients is random, independent from $X_n$, $ \widetilde x$, $\varepsilon_n$ and $\tilde \varepsilon$, with $\EE[\beta_\ast] = 0$ and $\EE[\beta_\ast \beta_\ast^\top] = \frac{\alpha^2}{p} I_p.$
\end{hypothesis}

\begin{hypothesis}\label[hypothesis]{hyp:Z}
$\exists \ \delta > 0$ s.t.
$
\mathbb{E}[|Z_{ij}|^{4+\delta}],\mathbb{E}[| \widetilde{z}_{j}|^{4+\delta}] <+\infty,
$
$\forall \ 1 \leq i \leq n$ and $1 \leq j \leq p$.
\end{hypothesis}
\begin{hypothesis}\label[hypothesis]{hyp:var1}
$\exists \ \gamma_{\mathrm{max}} > 0$ s.t.
$
\underset{n\geq 1}{\mathrm{sup}}\underset{\substack{1\leq i \leq n \\ 1 \leq j \leq p}}{\mathrm{max}}\lbrace|\gamma_{ij}^{(n)}|,| \tilde\gamma_{j}^{(n)}|\rbrace <\gamma_{\mathrm{max}}.
$
\end{hypothesis}
 The coefficient $\alpha > 0$ represents the average amount of signal strength in model \cref{eq:linmod}. Suppose that \Cref{hyp:theta} holds true, then the expectation in \Cref{eq:defrisk} used to define the predictive risk is thus taken with respect to both the randomness of the vector of coefficients $\beta_\ast$, the vector $\tilde{x}$ and the additive noise $\varepsilon_n$ and $\tilde{\varepsilon}$. Assuming random regression coefficients  $\beta_\ast $ allows for a tractable theoretical characterization of the training and predictive risks and it captures essential aspects of high-dimensional behavior that are otherwise more difficult to analyze.  \cref{hyp:Z} with the supplementary \cref{hyp:var2} and \cref{hyp:dim_rec} (to be given later on) ensure that the spectrum of $\frac{1}{n}X_n^\top X_n$ behaves well in the high-dimensional regime, that is whenever $\min(n,p) \to + \infty$ and $\frac{p}{n} \to c > 0$. Indeed, \cite{HLJ07} proves that, under \cref{hyp:Z,hyp:var1}, the empirical singular value distribution of $X_n$ converges, in the high-dimensional regime, to a distribution that solves a fixed-point equation. Note that these assumptions are not limited to the case of random Gaussian data.

\subsection{Main contributions} \label{sec:maincontrib}

Recall that the estimation of $Y_n$ by ridge regression is 
$
\hat{Y}_{\lambda} = X_n \hat{\theta}_{\lambda} = A_{\lambda} Y_n, \quad \mbox{where} \quad A_{\lambda} = X_n (X^\top_n X_n + n \lambda I_p)^{-1} X_n^\top.
$
Then, the degrees of freedom (DOF) of the estimator $\hat{\theta}_{\lambda} $, that is defined as
$$
\widehat{df}_{1}(\lambda) =  \frac{1}{p}\Tr [A_{\lambda}] = \frac{1}{p}\Tr [\widehat{\Sigma}_n (\widehat{\Sigma}_n + \lambda I_p)^{-1}], \mbox{ where } \widehat{\Sigma}_n = \frac{1}{n}X_n^\top X_n,
$$
represents  the so-called effective dimension of the linear estimator $\hat{Y}_{\lambda}$. The DOF is widely used in statistics to define various criteria for model selection among a collection of estimators, see e.g.\ \cite{Efron04}. Inspired by recent results from \cite{bach2023highdimensional} in the setting of iid data, a first contribution of this work is to prove the following deterministic equivalence of the DOF 
\begin{equation}
\widehat{df}_{1}(\lambda) \sim  df_{1}(\lambda), \quad \mbox{where} \quad df_{1}(\lambda) =  \frac{1}{p}\Tr[ \Sigma_n ( \Sigma_n + \kappa(\lambda))^{-1}], \label{eq:equivDOF}
\end{equation}
where
$
\Sigma_n = \EE [ \widehat{\Sigma}_n ] =  \frac{1}{n}  \diag\left( \sum_{i=1}^{n}  \gamma^2_{i1} ,\ldots, \sum_{i=1}^{n}  \gamma^2_{ip}\right),
$
and $\kappa(\lambda)$ is diagonal matrix that depends upon the regularization parameter $\lambda$ and the variance profile matrix $ \Gamma_n$. Throughout the paper, the meaning of the equivalence notation $A_n \sim B_n$ between two random variables is
$
\lim_{n \to \infty, \; p/n \to c} |A_n - B_n| = 0, \; \mbox{almost surely}.
$
Hence, the equivalence relation \cref{eq:equivDOF} indicates that the DOF of the ridge regression estimator for the empirical covariance matrix $\widehat{\Sigma}_n$ corresponds to the DOF computed with its expected version $\Sigma_n$ (the usual population covariance matrix for iid data), and  another additive regularization  structure than $\lambda I_p$  that is  given by the diagonal matrix  $\kappa(\lambda)$ whose explicit expression is given in \Cref{sec:main}.

The notion of degrees of freedom (DOF) plays a central role in the selection of the regularization parameter $\lambda$. 
In principle, $\lambda$ can be optimally selected by minimizing the predictive risk, as done in Section \ref{sec:main}. 
However, this approach typically requires access to independent test data or assumes the use of underlying parameters such as the signal strength $\alpha$ which is typically unknown. To circumvent these limitations, a widely used data-driven alternative is the  Generalized Cross-Validation (GCV) criterion \cite{golub1979generalized}. 
The GCV provides an approximation of the out-of-sample prediction error using only training data, without requiring an explicit data-splitting procedure. 
 Following \cite{golub1979generalized}, we define in this paper the GCV criterion as
\begin{equation}
 \hat G(\lambda) = \frac{\hat{r}_\lambda^{\text{train}}(X_n)}{\left(1 - \widehat{df}_1(\lambda)\right)^2}. \label{eq:hatGCV}
\end{equation}
Minimizing the GCV thus amounts to selecting the value of $\lambda$ that achieves an optimal balance between goodness-of-fit, as captured by the training risk, and model complexity, as measured by the degrees of freedom. In Section \ref{sec:main}, we derive a deterministic equivalent of $ \hat G(\lambda)$ and we discuss its minimizers through numerical experiments.

Then, the second and main contribution of the paper is to derive deterministic equivalents for both the predictive risk $\hat{r}_\lambda^{\text{test}}(X_n)$ and the training risk $\hat{r}_\lambda^{\text{train}}(X_n)$ in the high-dimensional regime. These deterministic equivalents allow to quantify the influence of the aspect ratio $c = \lim_{n \to +\infty} p/n$ on generalization performance, as well as to analyze the impact of the signal strength $\alpha$. In particular, we show the existence of an optimal value $\lambda_\ast = \frac{\sigma^2p}{\alpha^2n}$ that minimizes both the empirical predictive risk and its deterministic equivalent for any variance profile (see \cref{cor:lambda_opt}), and that equals the one obtained for iid data in \cite{17-AOS1549}. This result reveals the robustness of optimal shrinkage with respect to heteroscedasticity in the predictors, and shows that the optimal scaling of ridge regularization is universal across a broad class of variance profiles.

The derivation of these deterministic equivalents relies on a precise control of resolvent quantities associated with the empirical covariance matrix $\widehat{\Sigma}_n$. 
More precisely, the expressions of both the training and predictive risks involve the diagonal entries of the resolvent $Q_p(z) = (\widehat{\Sigma}_n - z I_p)^{-1}$ and of its derivative. 
As a consequence, the analysis requires establishing a deterministic equivalent for the diagonal of the resolvent $\Delta[Q_p(z)]$, rather than its normalized trace.
We obtain such a result by extending and strengthening the resolvent analysis developed in \cite{HLJ07} for random matrices with a variance profile, thereby overcoming the lack of unitary invariance inherent to the non-identically distributed setting. In addition, we study the behavior of the predictive risk in the ridgeless limit $\lambda \to 0$ in order to analyze the statistical properties of the minimum-norm least-squares estimator. 
This regime is particularly delicate, as it is governed by the behavior of the spectrum of the population and sample covariance matrices near zero. 
A sharp local analysis of the spectrum around the origin is therefore required to control the limiting risk and to capture interpolation phenomena. Indeed, the behavior of the smallest non-zero eigenvalue of $\widehat\Sigma_n$ is closely linked to risk blow-ups in the double descent phenomenon.
To this end, we rely on the precise spectral results established in \cite{alt2017local}, which provide the necessary information on the low-end behavior of the spectrum and allow us to rigorously characterize the asymptotic predictive risk in the interpolating regime.

As a consequence of these results, we investigate the similarities and differences that exist between the standard setting of iid data and the one of non-identically distributed data with a variance profile. For example, if the variance profile is assumed to be quasi doubly stochastic in the sense that
\begin{equation}\label{eq:db_sto}
\frac{1}{n}\sum_{j=1}^{p} \gamma_{ij}^2 = \frac{p}{n}, \mbox{ for all } 1 \leq i \leq n,  \quad \mbox{and} \quad \frac{1}{n} \sum_{i=1}^{n} \gamma_{ij}^2 = 1, \mbox{ for all } 1 \leq j \leq p,
\end{equation}
then, we prove in this paper that $\hat{r}_0^{test}(X_n)$ satisfies Equation \cref{eq:quasibistochastic}
That is, the quasi doubly stochastic case corresponds to the known asymptotic limit of the predictive risk  of the minimum norm least squares estimator for iid data  \cite{21-AOS2133} when the entries of $X_n$ are independent centered random variables with variance $1$ which is referred to as a constant variance profile in this paper (that is $\gamma_{ij} = 1$). Consequently, the double descent phenomenon still arises for non-iid data as illustrated in \cref{fig:intro}(a).

As a third contribution by going beyond the quasi doubly stochastic assumption \cref{eq:quasibistochastic}, the results of this paper also allow to exhibit variance profiles for which the predictive risk has a shape that differs from double descent. This is illustrated in \cref{fig:intro}(b), where the deterministic equivalent of the predictive risk has a triple descent behavior (as a function of $p/n$) for the following piecewise constant variance profile
$$
\Gamma_n  = \left[\begin{array}{cc} \gamma_1^2  \one_{n/4}\one_{p/4}^\top  & \gamma_2^2   \one_{n/4}\one_{3p/4}^\top \\ \gamma_2^2   \one_{3n/4}\one_{p/4}^\top & \gamma_1^2   \one_{3n/4}\one_{3p/4}^\top  \end{array}\right], \label{eq:piecewise}
$$
where $\one_{q}$ denotes the vector of length $q$ with all entries equal to one, and $\gamma_1,\gamma_2$ are positive constant such that $\gamma_2 \gg \gamma_1$. Note that \cref{fig:intro}(b) also illustrates the accuracy of the deterministic equivalent of the predictive risk that is proposed in this paper.

\begin{figure}
\begin{center}
{\subfigure[]{\includegraphics[width = 0.49\textwidth]{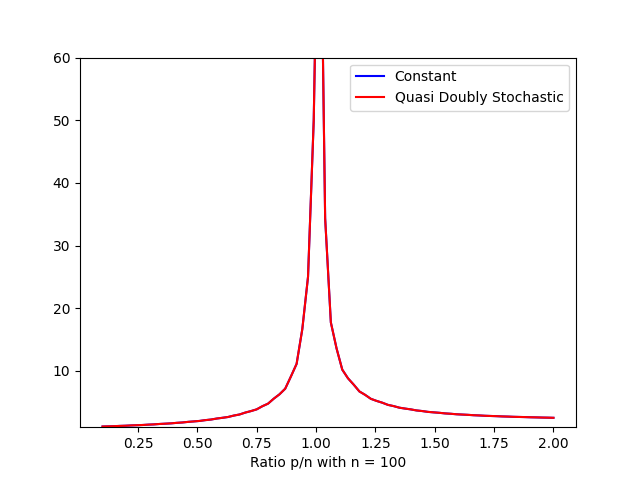}}
\label{fig:db_comparaison_const_db}}
\hfill
{\subfigure[]{\includegraphics[width =0.49\textwidth]{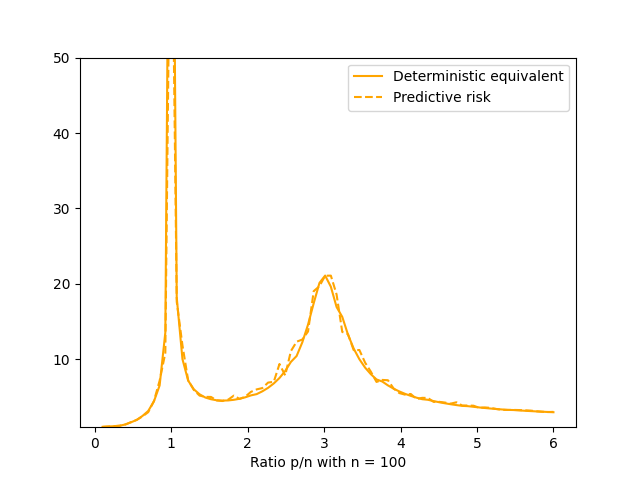}}
\label{fig:db_pw_Q_0.0005}}
\end{center}
\caption{Predictive risk for several variance profiles with $\lambda = 0$.  (a) Comparison of constant and quasi doubly stochastic variance profiles with $\alpha = \sigma = 1$, $n = 100$ and $p$ varying from $10$ to $200$. (b) Piecewise constant variance profile with $\gamma_1^2 = 0.0005$, $\gamma_2^2 = 1$, $\alpha = \sigma = 1$, $n = 100$ and $p$ varying from $10$ to $600$.} \label{fig:intro}
\end{figure}

\subsection{Organisation of the paper}

In \Cref{sec:related}, we review various works related to the analysis of high-dimensional linear regression and the study of random matrices with a variance profile. The main results are presented and discussed in \Cref{sec:main}. Numerical experiments are then reported in \Cref{sec:num}. A conclusion and some perspectives are proposed in \Cref{sec:conc}. All proofs using tools from the theory of random matrices and operator-valued Stieltjes transforms are deferred to an Appendix where we also discuss the use of random matrices with a variance profile in free probability.

\subsection{Publicly available source code}

For the sake of reproducible research,
a Python code is available at the following address:

\url{https://github.com/Issoudab/RidgeRegressionVarianceProfile} to implement  the experiments carried out in this paper.
\section{Related works} \label{sec:related}
In this section, we review the literature on the analysis of high-dimensional regression using tools from RMT. We also discuss existing works on linear regression for non-iid data, and the use of random matrices with a variance profile in statistics and RMT. 

\subsection{High-dimensional linear regression from the random matrix perspective}
When the sample size is comparable to the dimensionality of the observations,  recent advances in RMT have been successfully applied to various inference problems in high-dimensional  multivariate statistics, see e.g.\ \cite{reviewRMT} for a recent overview. Many works have considered the high-dimensional analysis of the linear model using tools from RMT for iid data with a general covariance structure $\Sigma \in \RR^{p \times p}$ (assumed to be a positive semi-definite matrix) that is for
$
X_n = Z_n \Sigma^{1/2}$,  for an $n \times p$ matrix $Z_n$ with iid centered entries having variance one.

In particular, for such data, the study of the minimum norm least-square estimator  and the double descent behavior of the predictive risk has been considered in \cite{21-AOS2133,bach2023highdimensional,belkin2020two,RichardsMR21}. The analysis of the predictive risk of ridge regression using iid data with a general covariance structure  has been studied in \cite{17-AOS1549,bach2023highdimensional}. More recently, the analysis of the minimum-norm least-squares estimator has enabled a substantial extension of asymptotic risk characterizations to  various learning paradigms. 
 In particular, \cite{song2024generalization,patil2024optimal} study the generalization properties of minimum-norm solutions in high-dimensional regimes, providing precise risk characterization {\JB in the settings of} out-of-distribution prediction and transfer learning,  building upon a rich line of earlier research that investigated ridge regression from a RMT perspective including \cite{elkaroui,dicker,li2023spectrum}. 
In parallel, closely related RMT techniques were developed in the wireless communications literature, where ridge-type estimators naturally arise in signal detection and channel estimation problems; see, for example, \cite{Couillet_Debbah_2011,TulinoV04}.

\subsection{Linear regression for independent but non-identically distributed data}

While the statistical analysis of linear regression for iid data with a general covariance structure is very well understood, the literature on the study of the linear model for non-identically distributed predictors appears to be  scarcer. A first analysis of  maximum likelihood estimation in standard models (including linear regression) for independent but non-identically distributed data dates back to  \cite{beran}. More recent works \cite{18-STS694,19-AOS1917}, on statistical inference in linear regression in the so-called model-free framework, allow to consider the setting on non-identically distributed predictors. The assumption of independent and identically distributed (iid) data often fails in real-world scenarios. This limitation notably appears in fields such as epidemiology \cite{eyre2022effect}, finance \cite{grobys2021we}, neuroscience \cite{tagliazucchi2013breakdown}, and climatology \cite{zscheischler2021evaluating}. The recognition of this issue has motivated significant research into the analysis of non-iid data. For example, \cite{zhang2022causal} highlights the non-iid nature of data in studies regarding the effectiveness of COVID-19 vaccines. They demonstrate that interactions between data points can introduce bias into estimation results. Beyond the risk of bias, the non-iid structure of data can also invalidate certain statistical methods. 
In response to the limitations of the iid setting, \cite{luo2024roti,atanasov2024risk} have recently proposed robust cross-validation schemes specifically tailored to settings with dependent observations, explicitly accounting for serial correlation and structural instability. Here, structural instability refers to features such as regime shifts, structural breaks, time-varying parameters, and changes in the underlying data-generating mechanism, all of which induce non-stationarities that violate the assumptions underlying standard validation procedures.
More broadly, the problem of cross-validation under dependence has attracted growing attention in the statistical and econometric literature. In particular, \cite{rabinowicz2022cross} provide a theoretical analysis of cross-validation procedures for dependent data, highlighting the failure modes of standard random-split approaches and establishing conditions under which modified validation schemes remain consistent, even in the presence of temporal dependence and mild non-stationarity. Similarly, \cite{yuval2025cross} study cross-validation in dependent settings, proposing dependence-aware resampling strategies that preserve the temporal structure of the data and yield more reliable risk estimates.
Together, these contributions underscore the necessity of adapting validation methodologies when data exhibit non-negligible dependence and structural instability, as is typically the case for financial time series such as realized volatility curves. 
The study of dependent data has also garnered growing attention in other domains. In control theory, researchers have made strides in addressing dependencies \cite{nagaraj2020least,tsiamis2023statistical,ziemann2024noise}. Similarly, the field of signal denoising has seen advancements with methods that explicitly account for data correlations \cite{sonthalia2023training,kausik2023double}, and the study of correlated data is a growing topic in high-dimensional statistics \cite{zhang2024spectral,zhang2024matrix}.

\subsection{The use of variance profile in RMT} \label{sec:varprofile}

RMT allows to describe the asymptotic distribution of the eigenvalues of large matrices with random entries, see e.g.\ \cite{MR2567175}. In particular, the well-known Marchenko-Pastur theorem characterizes the limiting spectral distribution of the covariance matrix $\widehat{\Sigma}_n = \frac{1}{n} X_n^\top X_n$ for a data matrix $X_n = Z_n \Sigma^{1/2}$ with iid rows in the asymptotic setting $\lim\limits_{n\to+\infty} \frac{p}{n} = c > 0$. A large class of random matrix models considered in RMT falls within the unitarily invariant framework \cite{meckes2014concentration}, in which the distribution of the random matrix is invariant under unitary (or orthogonal, in the real-valued case) conjugation. This structural property
has played a central role in the modern theoretical analysis of high-dimensional statistical estimators.
In particular, unitarily invariant random matrix models have been extensively studied in the context of linear ridge regression  \cite{li2023spectrum,17-AOS1549,bach2023highdimensional}.

However, in many applications (such as photon imaging \cite{SalmonHDW14}, network traffic analysis \cite{Bazerque2013}, ecology \cite{allesina2015predicting,allesina2015stability}, neuroscience \cite{aljadeff2015transition,aljadeff2015eigenvalues}, or genomics and microbiome studies \cite{Cao17}) one must go beyond this framework. Indeed, in these contexts, the amount of variance in the observed data matrix may vary substantially across samples, that is, across the rows of $X_n$. 
As a consequence, the assumption of homoscedasticity is no longer appropriate. The literature on statistical inference for high-dimensional random matrices with heteroscedastic structures has grown rapidly in recent years \cite{bigotmale,Bigot17,liu2018,MAL2016,Zhang18,Fan19}. One common approach to modeling heteroscedasticity is through the use of variance profiles. However, applying a variance profile to a matrix with i.i.d. entries breaks unitary invariance. This observation has led to the study of non–unitarily invariant models, often referred to as variance-profiled matrices.
In this setting, the dataset is modeled as a random matrix of the form
$
X_n = \Upsilon_n \circ Z_n,
$
with a variance profile to handle the setting of non-iid data has also found applications in the analysis of the performances of wireless digital communication channels  \cite{HLJ07}. Such matrices provide a flexible modeling framework for capturing heterogeneity in the data, in particular in the presence of missing or sparse observations by setting selected entries of the variance profile matrix to zero.

In the RMT literature, Hermitian random matrices with centered entries but non-equal distribution are referred to as generalized Wigner matrices for which many asymptotic properties are now well understood. For example, for Hermitian random matrices with a variance profile that is doubly stochastic (namely its rows and columns elements sump up to one), bulk universality at optimal spectral resolution for local spectral statistics have been established in \cite{erdos2012} and they are shown to converge to those of a standard Wigner matrix (that is with iid sub-diagonal entries).  The case of a generalized Wigner matrix with a variance profile that is not necessarily doubly stochastic has been studied in \cite{Ajanki2017}, and non-hermitian random matrices with a variance profile have been considered in \cite{cook2018,HLJ07,HACHEM2006649} using the notion of deterministic equivalent that consists in approximating the spectral distribution of a  random matrix by a deterministic function. A recent work \cite{bao2023leaveoneout} considers the analysis of approximate message passing for statistical estimation problems involving a data matrix with a variance profile, and an application to ridge regression to characterize the non-asymptotic distribution of the ridge estimator is proposed.

Let us now recall the key notion of Stieltjes transform.

\begin{definition}\label{def:Stieltjes}
Let $\mu$ be a probability measure supported on $\RR$. Then, its Stieltjes transform is defined as
$
g_{\mu}(z) = \int_{\RR} \frac{1}{t -z} d \mu(t), \quad \mbox{for} \quad z \in \CC^{+},
$
where $\CC^{+} = \{z \in \CC, \Im(z) > 0 \}$ and $\Im(\cdot)$ denotes the imaginary part of a complex number.
\end{definition}
Then, we build upon results from \cite{HLJ07} to construct deterministic equivalents  of the Stieltjes transforms (when $\lim_{n \to + \infty} p/n \to c >0$) of the empirical eigenvalue distribution $\hat{\mu}_n$ of $\widehat{\Sigma}_n$, and the empirical eigenvalue distribution $\tilde{\mu}_n$ of $\widetilde{\Sigma}_n = \frac{1}{n} X_n X_n^\top$ respectively.
$$
g_{\hat{\mu}_n}(z) = \frac{1}{p} \Tr[  (\widehat{\Sigma}_n -z I_p)^{-1}  ], \quad \mbox{and} \quad g_{\tilde{\mu}_n}(z) = \frac{1}{n} \Tr[  (\widetilde{\Sigma}_n -z I_n)^{-1}  ]
$$
for $z \in \CC \setminus \RR^+$.  Then, \cite{HLJ07}[Theorem 2.5] states that the following result holds.

\begin{theorem}\label{prop:Stieltjes} Suppose that \cref{hyp:Z} to \cref{hyp:dim_rec} hold true, then the following limit holds true almost surely
$$
\lim_{n \to \infty, \; p/n \to c}  \left( g_{\hat{\mu}_n}(z) - \frac{1}{p} \Tr [T_p(z)] \right) = 0, \quad \mbox{and} \quad \lim_{n \to \infty,\; p/n \to c}  \left( g_{\tilde{\mu}_n}(z) - \frac{1}{n} \Tr [\widetilde{T}_n(z)] \right) = 0, 
$$
for all $z \in \CC \setminus \RR^+$ with
$
T_p(z) = \diag(T_{p}^{(1)}(z),...,T_{p}^{(p)}(z))$ and $
\widetilde{T}_n(z) = \diag(\widetilde{T}_{n}^{(1)}(z),...,\widetilde{T}_{n}^{(n)}(z))
$
diagonal matrices of size $p \times p$ and $n \times n$ respectively, whose diagonal elements are the unique solutions of the deterministic system of $p + n$ equations
\begin{equation}\label{eq:T}
T_{p}^{(j)}(z)  =  \frac{-1}{z(1 + (1/n) \Tr[\widetilde{D}_j \widetilde{T}_n(z)])} \mbox{, } \widetilde{T}_{n}^{(i)}(z) =  \frac{-1}{z(1 + (1/n) \Tr[D_i T_p(z)])}, 
\end{equation}
with $1 \leq j \leq p$, $ 1 \leq i \leq n$,
$
\widetilde{D}_j = \diag( \gamma^2_{1j},\ldots,\gamma^2_{nj}) \mbox{ and }
D_i = \diag( \gamma^2_{i1},\ldots,\gamma^2_{ip}). 
$
Moreover, $ \frac{1}{p} \Tr [T_p(z)]$ and $\frac{1}{n} \Tr [\widetilde{T}_n(z)]$ are the Stieltjes transforms of probability measures denoted as $\nu_p$ and $\tilde{\nu}_n$ respectively.
\end{theorem}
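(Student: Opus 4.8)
This statement is the deterministic--equivalent theorem of \cite{HLJ07}[Theorem 2.5] rephrased in our notation, so the plan is to check that Assumptions \ref{hyp:Z}--\ref{hyp:dim_rec} imply the hypotheses of \cite{HLJ07} and then invoke that result; for completeness I outline the resolvent argument behind it. Write $Q_p(z) = (\widehat{\Sigma}_n - zI_p)^{-1}$ and $\widetilde{Q}_n(z) = (\widetilde{\Sigma}_n - zI_n)^{-1}$, so that $g_{\hat{\mu}_n}(z) = \frac1p\Tr[Q_p(z)]$ and $g_{\tilde{\mu}_n}(z) = \frac1n\Tr[\widetilde{Q}_n(z)]$, and recall $\widehat{\Sigma}_n = \frac1n\sum_{i=1}^n x_i x_i^\top$ with the rows $x_i = (\gamma_{i1}Z_{i1},\dots,\gamma_{ip}Z_{ip})^\top$ having independent centered entries and $\EE[x_i x_i^\top] = D_i = \diag(\gamma_{i1}^2,\dots,\gamma_{ip}^2)$, while the columns $\xi_j$ of $X_n$ satisfy $\EE[\xi_j\xi_j^\top] = \widetilde{D}_j = \diag(\gamma_{1j}^2,\dots,\gamma_{nj}^2)$. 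The first step is a truncation and recentering of the entries of $Z_n$ and $\widetilde{z}$, which is admissible under the $(4+\delta)$-moment bound of Assumption \ref{hyp:Z}, does not affect the limit, and makes the quadratic-form concentration estimates below applicable; Assumption \ref{hyp:var1} keeps all variances, and hence $\|D_i\|$ and $\|\widetilde{D}_j\|$, uniformly bounded.

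The core is a leave-one-out (Schur complement) analysis. Letting $Q_{p,i}(z)$ be the resolvent of $\widehat{\Sigma}_n - \frac1n x_i x_i^\top$, the Sherman--Morrison identity gives
\begin{equation*}
Q_p(z)\,x_i = \frac{Q_{p,i}(z)\,x_i}{1 + \tfrac1n x_i^\top Q_{p,i}(z)\, x_i}.
\end{equation*}
Since $Q_{p,i}(z)$ is independent of $x_i$ and $\|Q_{p,i}(z)\| \le 1/\mathrm{dist}(z,\RR^+)$, a Bai--Silverstein-type concentration lemma yields $\frac1n x_i^\top Q_{p,i}(z)x_i \approx \frac1n\Tr[D_i Q_{p,i}(z)] \approx \frac1n\Tr[D_i Q_p(z)]$, the last step by the rank-one perturbation bound on resolvent traces, while $\EE[x_{ij}(Q_{p,i}(z)x_i)_k] = \gamma_{ij}^2 (Q_{p,i}(z))_{jk}$. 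Substituting these into the $(j,k)$ entry of the resolvent identity $-z Q_p(z) = I_p - \frac1n\sum_{i} Q_p(z)x_i x_i^\top$, one finds that the off-diagonal entries of $Q_p(z)$ are negligible (this uses the entrywise, not merely rowwise, independence of $X_n$) and that its diagonal entries satisfy, up to vanishing error, the equation defining $T_p^{(j)}(z)$ in \eqref{eq:T}; the same computation for the columns $\xi_j$ and $\widetilde{\Sigma}_n$ yields the coupled equation for $\widetilde{T}_n^{(i)}(z)$. Averaging the diagonal entries gives $\frac1p\Tr[Q_p(z)] \approx \frac1p\Tr[T_p(z)]$ in expectation.

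To make this rigorous one needs existence and uniqueness of the solution of the system \eqref{eq:T} in the class of analytic maps $z\mapsto(T_p(z),\widetilde{T}_n(z))$ on $\CC\setminus\RR^+$ whose diagonal entries are Herglotz functions normalized by $-zT_p^{(j)}(z)\to 1$ as $|z|\to\infty$. This is the standard fixed-point step for variance-profiled models: one shows the right-hand side of \eqref{eq:T} defines an analytic self-map of an appropriate set of such tuples, with differential a sub-stochastic operator away from the support, so that uniqueness follows from a contraction estimate (or from the Earle--Hamilton theorem), and the stability of this fixed point then transfers the approximate equation satisfied by $\EE[\Delta[Q_p(z)]]$ to $T_p(z)$. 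The Herglotz property of each $T_p^{(j)}$ and $\widetilde{T}_n^{(i)}$, together with the normalization and analyticity on $\CC\setminus\RR^+$, shows via the Nevanlinna representation that $\frac1p\Tr[T_p(z)]$ and $\frac1n\Tr[\widetilde{T}_n(z)]$ are Stieltjes transforms of probability measures $\nu_p$ and $\tilde{\nu}_n$, which is the last assertion. Finally, to upgrade convergence of $\EE[\frac1p\Tr Q_p(z)]$ to the almost-sure statement, I would use a martingale (Azuma--McDiarmid) concentration inequality --- valid after truncation --- giving a tail bound in $n$ that is summable, then Borel--Cantelli over a countable dense subset of $\CC\setminus\RR^+$, and the local Lipschitz continuity of $z\mapsto\frac1p\Tr Q_p(z)$ to remove the exceptional null set uniformly in $z$.

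The main obstacle is precisely the absence of unitary invariance: unlike the Marchenko--Pastur case, the limit is not a scalar but the pair of diagonal matrices solving the coupled $(n+p)$-dimensional fixed-point system \eqref{eq:T}, so one must control the full diagonal of the resolvent, show that its off-diagonal entries and the bias $\EE[Q_p(z)]-T_p(z)$ vanish at the rate required by the concentration estimates using only the finite $(4+\delta)$ moments of Assumption \ref{hyp:Z}, and establish uniqueness and stability for that system.
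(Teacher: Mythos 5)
Your proposal takes essentially the same route as the paper: the paper offers no independent proof of this statement and simply invokes \cite{HLJ07}[Theorem 2.5], noting that Assumptions \ref{hyp:Z} and \ref{hyp:var1} guarantee its hypotheses, which is exactly your first step. Your additional outline of the leave-one-out resolvent analysis, the fixed-point existence/uniqueness for the coupled system \eqref{eq:T}, the Herglotz/Nevanlinna argument for the Stieltjes-transform claim, and the concentration plus Borel--Cantelli upgrade to almost-sure convergence is a faithful sketch of the argument in \cite{HLJ07}, so nothing essential is missing.
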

The measures  $\nu_p$ and $\tilde{\nu}_n$, defined in \cref{prop:Stieltjes}, are called the deterministic equivalents of the  empirical eigenvalue distributions $\hat{\mu}_n$  and $\tilde{\mu}_n$ respectively. By a slight abuse of notation, we may also denote by $T_p(z)$  the vector of size $p$ whose entries are the coefficients $T_{p}^{(j)}(z)$ for $1 \leq j \leq p$ solutions of the fixed point Equations \cref{eq:T}. Currently, a classical method to numerically approximate the value of  $T_p(z)$  is to solve the nonlinear system of deterministic Equations \cref{eq:T} written in a vector form that is referred to as the Dyson equation in \cite{Ajanki2017,Ajanki2019,alt2017local,alt2018}.  Indeed, as stated in \cite{alt2017local}[Theorem 2.1], the vector $T_p(z)$ is known to be the unique solution of the Dyson equation
\begin{equation}
\frac{1}{T_p(z)} = -z I_p +  \frac{1}{n}\Gamma_n^\top \frac{1}{1+ \frac{1}{n}\Gamma_n T_p(z)}, \label{eq:Dyson}
\end{equation}
that corresponds to Equation \cref{eq:T} written in a vector  form, where $1/v$ has to be understood as taking the inverse of the elements of the vector $v$  entrywise. Similarly, the vector $\widetilde{T}_n(z) = (\widetilde{T}_n^{(i)}(z))_{1 \leq i \leq n}$ satisfies the Dyson equation
$
\frac{1}{\widetilde{T}_n(z)} = -z I_n +  \frac{1}{n}\Gamma_n \frac{1}{1+ \frac{1}{n}\Gamma_n^\top \widetilde{T}_n(z)}.
$
In this paper, we sometimes consider, as an illustrative example, the specific class of {\it quasi doubly stochastic} variance profiles defined in \Cref{eq:db_sto}. The fixed point equation \cref{eq:Dyson} typically does not have an explicit expression, and one has  to rely on numerical methods to solve it as done in \Cref{sec:num}. Nevertheless, when the variance profile is quasi doubly stochastic, the solution of the Dyson equation is a  vector $T_p(z)$ having constant entries equal to $m_p(z) \in \CC$ (or equivalently $T_p(z) = m_p(z) I_p $ is a scalar matrix) that satisfies
\begin{equation} \label{eq:m}
\frac{1}{m_p(z)} = -z +  \frac{1}{1 + \frac{p}{n} m_p(z)}, \quad \mbox{for all}  \quad z \in \CC \setminus \RR^+.
\end{equation}
The above equality corresponds to the well-known fixed point equation satisfied by the Stieltjes transform $m_p(z)$ of the  Marchenko-Pastur distribution \cite{MR2567175}. One also has that $\widetilde{T}_n(z) = \tilde{m}_n(z) I_n $ with $ \tilde{m}_n(z)$ satisfying
\begin{equation} \label{eq:tildem}
\frac{1}{\tilde{m}_n(z)} = -z +  \frac{p/n}{1 +   \tilde{m}_n(z)} , \quad \mbox{for all}  \quad z \in \CC \setminus \RR^+.
\end{equation}

\section{Main results} \label{sec:main}

In this section, we derive deterministic equivalents for the DOF, the GCV criterion and the predictive risk of ridge regression. We also obtain a deterministic equivalent of the predictive risk of minimum norm least square estimation when the ridge regularization parameter tends to zero. We compare these results to those that are already known in the standard setting of iid data, and we highlight the emergence of the double descent phenomenon for non-iid data. In the  literature dealing with iid data, these equivalents are typically used to identify regimes of good and bad prediction, and to design  tuning rules for $\lambda$.
In the variance-profile setting, a similar analysis can be carried out but the conclusions naturally split  into:
\begin{itemize}
\item[(a)]  Universal consequences that hold across all variance profiles, such as the optimal choice of $\lambda_\ast$ in \cref{cor:lambda_opt};
\item[(b)] Profile-dependent consequences, such as the shape of the ridgeless risk as a function of $c=p/n$, which can significantly deviate from the classical double descent behavior.
\end{itemize}
The remainder of \Cref{sec:main} establishes the deterministic equivalents needed to reach such conclusions.

\subsection{Degrees of freedom}\label{sec:dof}
In this section, we prove that the quantity
$
df_{1}(\lambda)
$
introduced in \Cref{sec:maincontrib} is a deterministic  equivalent of the DOF $\widehat{df}_1(\lambda)$ .

\begin{proposition} \label{prop:DOF}
Suppose that \cref{hyp:Z,hyp:var1} hold true, then for any $\lambda > 0$, one has that : 
\begin{eqnarray}
    \lim_{n \to \infty, \; p/n \to c} |\widehat{df}_1(\lambda) - df_1(\lambda)| = 0, \; \mbox{almost surely}, \label{eq:DOFprop}
\end{eqnarray}
with $df_1(\lambda) = \frac{1}{p}\Tr[ \Sigma_n ( \Sigma_n + \kappa(\lambda))^{-1}]$, and
\begin{eqnarray}
\kappa(\lambda) & = & \diag(\kappa_{1}(\lambda),\ldots, \kappa_{p}(\lambda) ),\mbox{ where } \kappa_{j}(\lambda) =  \frac{ \Tr[\widetilde{D}_j]}{  \Tr[\widetilde{D}_j \widetilde{T}_n(- \lambda)]}. \label{eq:kappa}   
\end{eqnarray}
\end{proposition}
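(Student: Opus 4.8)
The plan is to express $\widehat{df}_1(\lambda)$ in terms of the resolvent $Q_p(-\lambda) = (\widehat\Sigma_n + \lambda I_p)^{-1}$ and then transfer the deterministic equivalent of that resolvent from Theorem~\ref{prop:Stieltjes}. First I would write
$$
\widehat{df}_1(\lambda) = \frac{1}{p}\Tr[\widehat\Sigma_n(\widehat\Sigma_n + \lambda I_p)^{-1}] = 1 - \frac{\lambda}{p}\Tr[(\widehat\Sigma_n + \lambda I_p)^{-1}] = 1 + \frac{\lambda}{p}\Tr[Q_p(-\lambda)],
$$
using $z = -\lambda$, so that $\widehat{df}_1(\lambda) = 1 + \lambda\, g_{\hat\mu_n}(-\lambda)$. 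By Theorem~\ref{prop:Stieltjes}, $g_{\hat\mu_n}(-\lambda) \sim \frac{1}{p}\Tr[T_p(-\lambda)]$ almost surely (the theorem is stated for $z \in \CC\setminus\RR^+$, and $-\lambda$ is a negative real, so one either invokes the statement directly on the negative real axis or extends it there by a standard analyticity/continuity argument, e.g.\ Vitali's theorem on the convergence of the analytic resolvent functions, using that both sides are uniformly bounded on compact subsets of $\CC\setminus\RR^+$). Hence $\widehat{df}_1(\lambda) \sim 1 + \frac{\lambda}{p}\Tr[T_p(-\lambda)]$, and it remains to show that the right-hand side equals $df_1(\lambda) = \frac{1}{p}\Tr[\Sigma_n(\Sigma_n + \kappa(\lambda))^{-1}]$, which is a purely deterministic algebraic identity.

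The core of the argument is therefore the deterministic identity: I would use the fixed-point equations~\eqref{eq:T} to rewrite each $T_p^{(j)}(-\lambda)$. From~\eqref{eq:T} with $z = -\lambda$,
$$
T_p^{(j)}(-\lambda) = \frac{1}{\lambda\bigl(1 + \tfrac{1}{n}\Tr[\widetilde D_j \widetilde T_n(-\lambda)]\bigr)}.
$$
Writing $(\Sigma_n)_{jj} = \frac{1}{n}\sum_{i=1}^n \gamma_{ij}^2 = \frac{1}{n}\Tr[\widetilde D_j]$ and $\kappa_j(\lambda) = \Tr[\widetilde D_j]/\Tr[\widetilde D_j \widetilde T_n(-\lambda)]$, I want to verify that
$$
1 + \lambda\, T_p^{(j)}(-\lambda) = \frac{(\Sigma_n)_{jj}}{(\Sigma_n)_{jj} + \kappa_j(\lambda)}.
$$
Indeed the left side equals $1 + \frac{1}{1 + \frac{1}{n}\Tr[\widetilde D_j\widetilde T_n(-\lambda)]} \cdot$ — here I need to be careful: actually $1 + \lambda T_p^{(j)} = \frac{1 + \frac{1}{n}\Tr[\widetilde D_j \widetilde T_n] + 1}{1 + \frac{1}{n}\Tr[\widetilde D_j\widetilde T_n]}$? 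No — let me instead compute $\frac{1}{p}\Tr[\Sigma_n(\Sigma_n+\kappa(\lambda))^{-1}] = \frac{1}{p}\sum_j \frac{(\Sigma_n)_{jj}}{(\Sigma_n)_{jj}+\kappa_j(\lambda)}$ and plug in $\kappa_j = \frac{(1/n)\Tr[\widetilde D_j]}{(1/n)\Tr[\widetilde D_j \widetilde T_n(-\lambda)]}$, which gives $\frac{(\Sigma_n)_{jj}}{(\Sigma_n)_{jj}+\kappa_j} = \frac{(1/n)\Tr[\widetilde D_j\widetilde T_n(-\lambda)]}{(1/n)\Tr[\widetilde D_j\widetilde T_n(-\lambda)] + 1}$, and then compare with $1 + \lambda T_p^{(j)}(-\lambda) = 1 - \frac{1}{1 + (1/n)\Tr[\widetilde D_j\widetilde T_n(-\lambda)]} = \frac{(1/n)\Tr[\widetilde D_j\widetilde T_n(-\lambda)]}{1+(1/n)\Tr[\widetilde D_j\widetilde T_n(-\lambda)]}$. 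These two match, so $df_1(\lambda) = \frac{1}{p}\sum_j (1 + \lambda T_p^{(j)}(-\lambda)) = 1 + \frac{\lambda}{p}\Tr[T_p(-\lambda)]$, as required. One should also note $\kappa_j(\lambda) > 0$ for $\lambda > 0$ (since $\widetilde T_n(-\lambda)$ has positive entries, as $-\lambda$ lies to the left of the support of $\tilde\nu_n$), so all the inverses are well-defined and $df_1(\lambda) \in (0,1)$.

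The main obstacle, I expect, is the justification of evaluating the deterministic equivalent of Theorem~\ref{prop:Stieltjes} at the negative real point $z = -\lambda$ rather than at $z \in \CC^+$: the convergence in the cited theorem is pointwise on $\CC\setminus\RR^+$, and one needs either that the statement already covers negative reals (which it does, as $\RR^- \subset \CC\setminus\RR^+$) or, if one only has it on $\CC^+$, a normal-families argument to extend it, together with control showing no exceptional null set issue arises when passing to a fixed real point. A secondary technical point is confirming that $\widehat\Sigma_n + \lambda I_p$ is invertible with operator norm of the resolvent bounded by $1/\lambda$ uniformly, which is immediate since $\widehat\Sigma_n \succeq 0$; this guarantees $\widehat{df}_1(\lambda)$ is well-defined and bounded, and lets one pass from almost-sure convergence of $g_{\hat\mu_n}(-\lambda)$ to that of $\widehat{df}_1(\lambda)$ without integrability concerns. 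Everything else is the elementary algebra above.
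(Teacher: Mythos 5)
Your proposal is correct and follows essentially the same route as the paper's proof: write $\widehat{df}_1(\lambda) = 1 - \lambda g_{\hat\mu_n}(-\lambda)$, apply Theorem \ref{prop:Stieltjes} directly at $z=-\lambda$ (no extension argument is needed since $\RR^-\subset\CC\setminus\RR^+$), and verify through the fixed-point equations \eqref{eq:T} the purely deterministic identity $1-\frac{\lambda}{p}\Tr[T_p(-\lambda)] = \frac{1}{p}\Tr[\Sigma_n(\Sigma_n+\kappa(\lambda))^{-1}]$. The only blemish is a pair of compensating sign slips: with the paper's convention $Q_p(-\lambda)=(\widehat\Sigma_n+\lambda I_p)^{-1}$ one has $\widehat{df}_1(\lambda)=1-\lambda g_{\hat\mu_n}(-\lambda)$ (not $1+\lambda g_{\hat\mu_n}(-\lambda)$), and since $T_p^{(j)}(-\lambda)=\frac{1}{\lambda(1+(1/n)\Tr[\widetilde D_j\widetilde T_n(-\lambda)])}>0$ the per-coordinate identity you actually verify is $1-\lambda T_p^{(j)}(-\lambda)=\frac{(\Sigma_n)_{jj}}{(\Sigma_n)_{jj}+\kappa_j(\lambda)}$, so the argument goes through unchanged once the signs are written consistently.
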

When the variance profile is quasi doubly stochastic, one has that  $ \Tr[ \widetilde{D}_j ] = n$. Moreover, as remarked in \Cref{sec:varprofile},  the matrix $\widetilde{T}_n(- \lambda) = \tilde{m}_n(- \lambda) I_n $ is scalar, implying that for $1 \leq j \leq p$, $\Tr[\widetilde{D}_j \widetilde{T}_n(- \lambda)] = n  \tilde{m}_n(- \lambda)$. Moreover, in this setting, $\Sigma_n = I_p$. Consequently, if $\Gamma_n$ is quasi doubly stochastic, one has that
$
\kappa(\lambda) = \frac{1}{ \tilde{m}_n(- \lambda)} I_p,
$
and the deterministic equivalent of the DOF is
$
df_1(\lambda) = \frac{1}{p}\Tr\left[ \Sigma_n \left( \Sigma_n + \kappa(\lambda) \right)^{-1}\right] =   \left( 1+\frac{1}{ \tilde{m}_n(- \lambda)}\right)^{-1}.
$
The above equality corresponds to the formula of a deterministic equivalent of the DOF derived in \cite{bach2023highdimensional} for iid data when the entries of the features matrix $X_n$ are made of iid centered random variables with variances equal to $1$.

\subsection{Deterministic equivalents of the diagonal of the resolvent}

 Let $Q_p(z) = (\widehat \Sigma_n - zI_p)^{-1}$ for $z \in \CC \setminus \RR^+,
$
be the resolvent of the matrix  $\widehat{\Sigma}_n = \frac{1}{n}X_n^\top X_n$, and for any square matrix $A$,  we denote by $\Delta[A]$ the diagonal matrix whose diagonal entries are those of $A$. Then, as shown in the next subsection, a key argument to derive   deterministic equivalents of the predictive and training risks is to prove that the matrix $T_p(z)$,  defined by \Cref{eq:Dyson}, is a relevant deterministic equivalent of the diagonal matrix $\Delta[Q_p(z)]$ for an appropriate notion of asymptotic equivalence between matrices of growing size. This is the purpose of \cref{theoremRMT}  below that derives a stronger convergence result than the one stated in \cref{prop:Stieltjes} which only shows that $ \frac{1}{p} \Tr [T_p(z)]$ is a deterministic equivalent of $g_{\hat{\mu}_n}(z) = \frac{1}{p} \Tr [Q_p(z)]$.
 To this end, we define the following equivalence relation in order to specify an appropriate notion of deterministic equivalent for the matrix $\Delta[Q_p(-\lambda)]$.
\begin{definition}\label{def:equiv}
Let $\mathbf{A} = (A_p)_{p\geq 1}$ and $\mathbf{B} = (B_p)_{p\geq 1}$ be a family of  square complex random matrices such that for all $p\geq1$, $A_p, B_p \in \mathbb{C}^{p\times p}$. Then, the two families of matrices $\mathbf{A}$ and $\mathbf{B}$ are said to be equivalent, denoted by $\mathbf{A} \sim \mathbf{B}$, if 
\begin{eqnarray*}
\lim_{n \to \infty, \; p/n \to c} \left|\frac{1}{p} \mathrm{Tr}[A_p U_p] - \frac{1}{p} \mathrm{Tr}[B_p U_p]\right| = 0, \; \mbox{almost surely},
\end{eqnarray*}
for all family of deterministic matrices $(U_p)_{p\geq 1}$ satisfying:
\begin{eqnarray}\label{eq:diag_family}
\forall p\geq 1, \ \ \   U_p\in \mathbb{R}^{p\times p} & \mbox{ and } & \exists K >0, \ \sup_{p\geq 1}\max_{i\in [p]} |U_p^{(i)}| \leq K,
\end{eqnarray}
where $U_p^{(i)}$ denotes the i-th diagonal entry of $U_p$.
\end{definition}
Then using this definition, we extend \cref{prop:Stieltjes} from \cite{HLJ07} by proving that $T_p(z)$ is a relevant deterministic equivalent of $\Delta[Q_p(z)]$ in the following sense.
\begin{theorem}\label{theoremRMT}

For $z \in \mathbb{C} \setminus \mathbb{R}^+$, define the matrix families
\begin{align*}
\Delta[\mathbf{Q}(z)] &= \big( \Delta[Q_p(z)] \big)_{p \geq 1}, 
& \Delta[\widetilde{\mathbf{Q}}(z)] &= \big( \Delta[\widetilde Q_n(z)] \big)_{n \geq 1}, \\
\mathbf{T}(z) &= \big( T_p(z) \big)_{p \geq 1}, 
& \widetilde{\mathbf{T}}(z) &= \big( \widetilde T_n(z) \big)_{n \geq 1},
\end{align*}
and denote by $\mathbf{Q}'(z), \widetilde{\mathbf{Q}}'(z), \mathbf{T}'(z), \widetilde{\mathbf{T}}'(z)$ the corresponding families of derivatives with respect to $z$.

Assume that \cref{hyp:Z,hyp:var1} hold. Then, as $p,n \to \infty$, one has
\begin{align*}
\Delta[\mathbf{Q}(z)] &\sim \mathbf{T}(z), 
& \Delta[\mathbf{Q}'(z)] &\sim \mathbf{T}'(z), \\
\Delta[\widetilde{\mathbf{Q}}(z)] &\sim \widetilde{\mathbf{T}}(z), 
& \Delta[\widetilde{\mathbf{Q}}'(z)] &\sim \widetilde{\mathbf{T}}'(z).
\end{align*}

\end{theorem}
We postpone the proof of this theorem to \Cref{sec:proofs}  that is inspired from the proof of \cref{prop:Stieltjes} in \cite{HLJ07}.
This theorem is a cornerstone in the derivation of deterministic equivalents for $\hat{r}_\lambda^{test} (X_n)$ and $\hat{r}_\lambda^{train} (X_n)$  that leads to the main results of this paper provided in the next subsection.

\subsection{Deterministic equivalents of the  training  and predictive risks  for $\lambda > 0$} \label{sec:predrisk}

We first express  the training and predictive     risks in a more convenient way.

\begin{lemma} \label{lem:decomp}
For any $\lambda > 0$, the training risk $\hat{r}_\lambda^{train} (X_n) = \frac{1}{n}\mathbb{E} [\parallel Y_n -  X_n \hat{\theta}_{\lambda} \parallel^2|X_n]$ and the  predictive risk $\hat{r}_\lambda^{test} (X_n) = \EE[(\tilde{y} - \tilde{x}^\top \hat{\theta}_\lambda)^2 | X_n]$ have the following expressions
\begin{eqnarray}\label{eq:train_Chap1}
    \hat{r}_\lambda^{train} (X_n) & = &\frac{\lambda^2\alpha^2}{p}\mathrm{Tr}\Bigg[\widetilde Q_n(-\lambda) - \widetilde Q_n'(-\lambda)\Bigg] + \frac{\lambda^2\sigma^2}{n}\mathrm{Tr}[\widetilde Q_n'(-\lambda)],
\end{eqnarray}
\begin{eqnarray}\label{eq:risk}
    \hat{r}_\lambda^{test} (X_n) = \sigma^2 + \frac{\sigma^2}{n}\mathrm{Tr}\big[\widetilde{S}_p \Delta[Q_p(-\lambda)]\big] + \lambda\left( \frac{\lambda\alpha^2}{p} -  \frac{\sigma^2}{n} \right) \mathrm{Tr}\big[\widetilde{S}_p  \Delta[Q_p'(-\lambda)]\big],
\end{eqnarray}
where
$Q_p(z) = (\widehat \Sigma_n - zI_p)^{-1}$,respectively $\widetilde Q_n(z) = (\widetilde \Sigma_n - zI_p)^{-1}$, for $z \in \CC \setminus \RR^+,
$
is the resolvent of the matrix $\widehat{\Sigma}_n = \frac{1}{n}X_n^\top X_n$, respectively $\widetilde{\Sigma}_n = \frac{1}{n}X_n X_n^\top$ and $Q_p'(z)$, respectively $\widetilde Q_n'(z)$, denotes the derivative of $Q_p(z)$, respectively $\widetilde Q_n(z)$, with respect to $z$.
Moreover, we can exhibit the following Bias-Variance decomposition for $\hat{r}_\lambda^{test} (X_n)$: 
\begin{eqnarray*}
    \hat{r}_\lambda^{test} (X_n) & = & \sigma^2 + \mathrm{Bias}(\hat{\theta}_{\lambda}) + \mathrm{Var}(\hat{\theta}_{\lambda}),
\end{eqnarray*}
with 
\begin{eqnarray*}
\mathop{\rm Bias}\nolimits(\hat{\theta}_{\lambda}) = \EE\big[ (\mathbb{E}[\hat{\theta}_{\lambda}|X_n, \beta_\ast]  - \beta_\ast)^\top\widetilde{S}_p (\mathbb{E}[\hat{\theta}_{\lambda}|X_n, \beta_\ast] - \beta_\ast) \big|X_n\big]=\frac{\lambda^2\alpha^2}{p} \mathrm{Tr} \big[\widetilde{S}_p \Delta[Q_p'(-\lambda)]\big],
\end{eqnarray*}
\begin{eqnarray*}
\mathop{\rm Var}(\hat{\theta}_{\lambda})   =  \mathbb{E}\big[ (\hat{\theta}_{\lambda} - \mathbb{E}[\hat{\theta}_{\lambda}|X_n, \beta_\ast] )^\top\widetilde{S}_p (\hat{\theta}_{\lambda} - \mathbb{E}[\hat{\theta}_{\lambda}|X_n, \beta_\ast] ) \big|X_n\big]=\frac{\sigma^2}{n} \mathrm{Tr}\big[\widetilde{S}_p\Delta[Q_p(-\lambda)] -\lambda \widetilde{S}_p\Delta[Q_p'(-\lambda)] \big].
\end{eqnarray*}
\end{lemma}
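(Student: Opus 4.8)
The plan is to start from the closed-form expression $\hat\theta_\lambda = (X_n^\top X_n + n\lambda I_p)^{-1} X_n^\top Y_n$ and substitute $Y_n = X_n\beta_\ast + \varepsilon_n$, so that $\hat\theta_\lambda = M_\lambda (X_n\beta_\ast + \varepsilon_n)$ with $M_\lambda = (X_n^\top X_n + n\lambda I_p)^{-1} X_n^\top$. First I would record the identity $M_\lambda X_n = I_p - \lambda(\widehat\Sigma_n + \lambda I_p)^{-1} = I_p + \lambda Q_p(-\lambda)$ (using $Q_p(-\lambda) = (\widehat\Sigma_n+\lambda I_p)^{-1}$), which immediately gives $\hat\theta_\lambda - \beta_\ast = -\lambda Q_p(-\lambda)\beta_\ast + M_\lambda\varepsilon_n$ and $\mathbb{E}[\hat\theta_\lambda \mid X_n,\beta_\ast] = \beta_\ast + \lambda Q_p(-\lambda)\beta_\ast$. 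The residual for the training risk is $Y_n - X_n\hat\theta_\lambda = (I_n - X_n M_\lambda) Y_n = n\lambda(X_n X_n^\top + n\lambda I_n)^{-1} Y_n$, and I would use the resolvent symmetry between $\widehat\Sigma_n = \frac1n X_n^\top X_n$ and $\frac1n X_n X_n^\top$ to push everything to $p\times p$ traces involving $Q_p$.

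For the training risk, expand $\frac1n\|Y_n - X_n\hat\theta_\lambda\|^2$, take $\mathbb{E}[\,\cdot\mid X_n]$ using Assumption 1.1 ($\mathbb{E}[\beta_\ast\beta_\ast^\top] = \frac{\alpha^2}{p}I_p$) and $\mathbb{E}[\varepsilon_n\varepsilon_n^\top] = \sigma^2 I_n$, and convert the resulting $n\times n$ traces of $(\frac1n X_nX_n^\top + \lambda I_n)^{-2}$ and related matrices into $p\times p$ traces of $Q_p(-\lambda)$ and $Q_p(-\lambda)^2 = -Q_p'(-\lambda)$. The $\alpha^2$ term will carry a factor $\mathrm{Tr}[\widehat\Sigma_n Q_p(-\lambda)^2 \cdot(\text{something})]$ that, after algebraic simplification using $\widehat\Sigma_n Q_p(-\lambda) = I_p + \lambda Q_p(-\lambda)$, collapses to $\frac{\lambda^2\alpha^2}{n}\mathrm{Tr}[Q_p(-\lambda) - \lambda Q_p'(-\lambda)]$; the $\sigma^2$ term similarly yields $\frac{\lambda^2\sigma^2}{n}\mathrm{Tr}[Q_p'(-\lambda)]$. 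For the predictive risk, write $\tilde y - \tilde x^\top\hat\theta_\lambda = \tilde\varepsilon + \tilde x^\top(\beta_\ast - \hat\theta_\lambda)$; independence of $\tilde\varepsilon$ gives the $\sigma^2$ term, and $\mathbb{E}[\tilde x\tilde x^\top] = \widetilde S_p$ turns the remaining square into $\mathbb{E}[(\beta_\ast - \hat\theta_\lambda)^\top \widetilde S_p (\beta_\ast - \hat\theta_\lambda)\mid X_n]$. Splitting $\beta_\ast - \hat\theta_\lambda$ into its conditional mean part $-\lambda Q_p(-\lambda)\beta_\ast$ and fluctuation $-M_\lambda\varepsilon_n$ gives exactly the Bias/Var decomposition; taking expectations over $\beta_\ast$ and $\varepsilon_n$ produces $\frac{\lambda^2\alpha^2}{p}\mathrm{Tr}[\widetilde S_p Q_p(-\lambda)^2]$ for the bias and $\frac{\sigma^2}{n}\mathrm{Tr}[\widetilde S_p M_\lambda M_\lambda^\top]$ for the variance, and $M_\lambda M_\lambda^\top = Q_p(-\lambda)\widehat\Sigma_n Q_p(-\lambda) = Q_p(-\lambda) + \lambda Q_p'(-\lambda)$ (since $\widehat\Sigma_n Q_p = I_p + \lambda Q_p$ and $Q_p^2 = -Q_p'$).

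The only subtle point is the appearance of $\Delta[Q_p(-\lambda)]$ rather than $Q_p(-\lambda)$ itself in the predictive-risk formula: this is because $\widetilde S_p = \diag(\tilde\gamma_1^2,\dots,\tilde\gamma_p^2)$ is diagonal, so $\mathrm{Tr}[\widetilde S_p A] = \mathrm{Tr}[\widetilde S_p \Delta[A]]$ for any matrix $A$; I would state this as an elementary observation and use it to rewrite $\mathrm{Tr}[\widetilde S_p Q_p(-\lambda)]$ as $\mathrm{Tr}[\widetilde S_p\Delta[Q_p(-\lambda)]]$ and likewise for $Q_p'(-\lambda)$. I do not anticipate a genuine obstacle here — the lemma is a deterministic algebraic identity conditional on $X_n$ — but the main bookkeeping effort is the careful use of the two resolvent identities $\widehat\Sigma_n Q_p(-\lambda) = I_p + \lambda Q_p(-\lambda)$ and $Q_p'(-\lambda) = -Q_p(-\lambda)^2$, applied repeatedly to reduce every trace to the stated combinations of $\mathrm{Tr}[Q_p(-\lambda)]$ and $\mathrm{Tr}[Q_p'(-\lambda)]$; getting the coefficients $\lambda^2$, $\frac{\lambda\alpha^2}{p}$ and $\frac{\sigma^2}{n}$ exactly right is where one must be most careful.
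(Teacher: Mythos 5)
Your plan follows essentially the same route as the paper's proof: substitute the closed form of $\hat\theta_\lambda$ and the linear model into both risks, take conditional expectations using $\mathbb{E}[\beta_\ast\beta_\ast^\top]=\tfrac{\alpha^2}{p}I_p$ and $\mathbb{E}[\varepsilon_n\varepsilon_n^\top]=\sigma^2 I_n$, reduce everything to traces of $Q_p(-\lambda)$ and $Q_p'(-\lambda)$ via the resolvent identities, split into bias and variance through the conditional mean of $\hat\theta_\lambda$ given $(X_n,\beta_\ast)$, and insert $\Delta[\cdot]$ using the diagonality of $\widetilde S_p$ — all of which is exactly what the paper does. The only defects are sign slips in your auxiliary identities which, taken literally, would not reproduce the stated variance term: with the paper's convention $Q_p(-\lambda)=(\widehat\Sigma_n+\lambda I_p)^{-1}$ one has $M_\lambda X_n=\widehat\Sigma_n Q_p(-\lambda)=I_p-\lambda Q_p(-\lambda)$ (not $I_p+\lambda Q_p(-\lambda)$) and $Q_p'(-\lambda)=+\,Q_p(-\lambda)^2$ (not $-Q_p(-\lambda)^2$), hence $M_\lambda M_\lambda^\top=\tfrac{1}{n}\,Q_p(-\lambda)\widehat\Sigma_n Q_p(-\lambda)=\tfrac{1}{n}\bigl(Q_p(-\lambda)-\lambda Q_p'(-\lambda)\bigr)$, which yields $\mathrm{Var}(\hat{\theta}_{\lambda})=\tfrac{\sigma^2}{n}\mathrm{Tr}\bigl[\widetilde S_p\bigl(\Delta[Q_p(-\lambda)]-\lambda \Delta[Q_p'(-\lambda)]\bigr)\bigr]$, rather than the $Q_p(-\lambda)+\lambda Q_p'(-\lambda)$ (and the dropped factor $\tfrac1n$ in $M_\lambda M_\lambda^\top$) appearing in your sketch. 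These are purely computational slips, easily repaired, and do not affect the soundness of the approach, which coincides with the paper's.
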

We can now give a deterministic equivalent of the predictive risk $\hat{r}_\lambda^{test} (X_n)$ that is  obtained in a simple way by replacing the diagonal matrix $\Delta[Q_p(z)]$ with the deterministic matrix $T_p(z)$ in the expression \cref{eq:risk} of the predictive risk. All proofs of the following results are given in the Appendix.

\begin{theorem}\label{theoremStat}
Suppose that \cref{hyp:theta} to \cref{hyp:var1} hold true, then one can provide a deterministic equivalent for the predictive risk (respectively for the training risk), denoted by $r_\lambda^{test}$ (respectively $r_\lambda^{train}$) and defined as follows
\begin{eqnarray}\label{eq:equiv_det}
r_\lambda^{test} (X_n) &=& \sigma^2 + \frac{\sigma^2}{n}\mathrm{Tr}[\widetilde{S}_p T_p(-\lambda)] + \lambda\left( \frac{\lambda\alpha^2}{p} -  \frac{\sigma^2}{n} \right) \mathrm{Tr}[\widetilde{S}_p  T_p'(-\lambda)  ],\\
r_\lambda^{train} (X_n) &=& \frac{\lambda^2\alpha^2}{p}\mathrm{Tr}\Bigg[\widetilde T_n(-\lambda) - \lambda \widetilde T_n'(-\lambda)\Bigg] + \frac{\lambda^2\sigma^2}{n}\mathrm{Tr}[\widetilde T_n'(-\lambda)],
\end{eqnarray}
in the sense that it satisfies
$
\lim_{n \to \infty, \; p/n \to c} |\hat{r}_\lambda^{\bullet} (X_n) - r_\lambda^{\bullet} (X_n)| = 0, \; \mbox{almost surely},
$
where $\bullet \in \lbrace train, \ test \rbrace$ and $T_p'(z)$, respectively $\widetilde T_n'(z)$, denotes the  derivative of $T_p(z)$, respectively $\widetilde T_n(z)$,  with respect to $z$. 
\end{theorem}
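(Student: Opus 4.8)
The plan is to combine the exact expressions for the risks in Lemma \ref{lem:decomp} with the matrix deterministic equivalence established in Theorem \ref{theoremRMT}. Looking at Equation \eqref{eq:risk}, the empirical predictive risk is an affine combination of $\frac{1}{n}\mathrm{Tr}[\widetilde{S}_p \Delta[Q_p(-\lambda)]]$ and $\frac{1}{n}\mathrm{Tr}[\widetilde{S}_p \Delta[Q_p'(-\lambda)]]$, with coefficients depending only on $\lambda, \alpha^2, \sigma^2, p/n$, all of which are bounded in the asymptotic regime. The candidate deterministic equivalent $r_\lambda^{test}(X_n)$ in Equation \eqref{eq:equiv_det} is the \emph{same} affine combination with $\Delta[Q_p(-\lambda)]$ and $\Delta[Q_p'(-\lambda)]$ replaced by $T_p(-\lambda)$ and $T_p'(-\lambda)$. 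So the entire statement reduces to showing that
\begin{eqnarray*}
\frac{1}{n}\mathrm{Tr}[\widetilde{S}_p \Delta[Q_p(-\lambda)]] \sim \frac{1}{n}\mathrm{Tr}[\widetilde{S}_p T_p(-\lambda)] \quad \text{and} \quad \frac{1}{n}\mathrm{Tr}[\widetilde{S}_p \Delta[Q_p'(-\lambda)]] \sim \frac{1}{n}\mathrm{Tr}[\widetilde{S}_p T_p'(-\lambda)],
\end{eqnarray*}
and similarly for the training risk, where now the relevant traces are $\frac{1}{n}\mathrm{Tr}[Q_p(-\lambda)]$, $\frac{1}{n}\mathrm{Tr}[Q_p'(-\lambda)]$, i.e.\ the case $U_p = I_p$.

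First I would observe that $\widetilde{S}_p = \diag(\tilde\gamma_1^2,\ldots,\tilde\gamma_p^2)$ is a deterministic diagonal matrix whose entries are uniformly bounded by $\gamma_{\max}^2$ by Assumption \ref{hyp:var1}; hence, up to the harmless rescaling $\frac{p}{n}\cdot\frac{1}{p}$ (note $p/n \to c$), the quantity $U_p = \widetilde{S}_p$ is precisely an admissible test matrix in the sense of Equation \eqref{eq:diag_family} in Definition \ref{def:equiv}. Therefore, by Theorem \ref{theoremRMT}, which states $\Delta[\mathbf{Q}(-\lambda)] \sim \mathbf{T}(-\lambda)$ and $\Delta[\mathbf{Q'}(-\lambda)] \sim \mathbf{T'}(-\lambda)$ (valid for $-\lambda \in \mathbb{C}\setminus\mathbb{R}^+$ since $\lambda>0$), we get
\begin{eqnarray*}
\left|\frac{1}{p}\mathrm{Tr}[\Delta[Q_p(-\lambda)]\,\widetilde{S}_p] - \frac{1}{p}\mathrm{Tr}[T_p(-\lambda)\,\widetilde{S}_p]\right| \to 0 \quad \text{almost surely},
\end{eqnarray*}
and likewise for the derivatives. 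Since $\mathrm{Tr}[\Delta[Q_p(-\lambda)]\,\widetilde{S}_p] = \mathrm{Tr}[Q_p(-\lambda)\,\widetilde{S}_p]$ because $\widetilde{S}_p$ is diagonal, and multiplying by the bounded factor $p/n$ preserves almost sure convergence to zero, this yields the two displayed equivalences above. Then I would simply substitute into Equations \eqref{eq:risk} and \eqref{eq:train_Chap1}, using that a bounded deterministic (or almost-surely bounded) coefficient times a quantity tending to zero still tends to zero, and that a finite sum of such terms tends to zero, to conclude $|\hat r_\lambda^{test}(X_n) - r_\lambda^{test}(X_n)| \to 0$ and $|\hat r_\lambda^{train}(X_n) - r_\lambda^{train}(X_n)| \to 0$ almost surely.

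The only genuine subtlety — and hence the step I would treat most carefully — is the handling of the coefficient $\frac{\lambda\alpha^2}{p}$ multiplying $\mathrm{Tr}[\widetilde{S}_p\Delta[Q_p'(-\lambda)]]$: here the trace is an order-$p$ quantity, so one must be sure the combination is correctly normalized, i.e.\ that $\frac{1}{p}\mathrm{Tr}[\widetilde{S}_p\Delta[Q_p'(-\lambda)]]$ is the $O(1)$ object to which Theorem \ref{theoremRMT} applies, and the prefactors $\lambda\alpha^2$, $\lambda\sigma^2 p/n$, etc., are the bounded scalars carried along. A second, more minor point is that Theorem \ref{theoremRMT} is stated for $z \in \mathbb{C}\setminus\mathbb{R}^+$, and we use it at the real point $z=-\lambda<0$, which is indeed in that set, so no boundary/continuity argument is needed. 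Everything else is bookkeeping: one writes $\hat r_\lambda^{test}(X_n) - r_\lambda^{test}(X_n)$ as an explicit linear combination of the three differences $\frac{p}{n}\big(\frac{1}{p}\mathrm{Tr}[\widetilde{S}_p(\Delta[Q_p(-\lambda)]-T_p(-\lambda))]\big)$ and its $Q'$ analogue, each multiplied by a bounded coefficient, and invokes the almost-sure convergence from Theorem \ref{theoremRMT} term by term.
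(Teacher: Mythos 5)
Your proposal is correct and follows essentially the same route as the paper: the paper's proof likewise notes that $\widetilde{S}_p$ (by Assumption \ref{hyp:var1}) and $I_p$ satisfy the admissibility condition Equation \eqref{eq:diag_family}, applies Theorem \ref{theoremRMT} at $z=-\lambda$ to get $\frac{1}{p}\Tr[Q_p(-\lambda)\widetilde{S}_p]\sim\frac{1}{p}\Tr[T_p(-\lambda)\widetilde{S}_p]$ and the analogous equivalences for the derivatives and for $U_p=I_p$, and then substitutes these into the expressions of Lemma \ref{lem:decomp}, carrying along the bounded scalar coefficients exactly as you describe.
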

From \cref{theoremStat}, one can deduce that, in the high-dimensional regime,  the training and predictive risk crystallize around deterministic values that  depend on the data only through their variance profile. These deterministic equivalents can be expressed explicitly whenever the variance profile is quasi doubly-stochastic since $T_p(z)$ is equal to the Marchenko-Pastur Stieltjes transform in this case (see \Cref{sec:varprofile}). However, the equivalents are not explicit in the general case since one does not  have an explicit formula for $T_p(z)$. Nevertheless, $T_p(z)$ being the solution of a fixed-point equation, it can be approximated through a fixed-point algorithm. This alternative allowed us to perform numerical experiments that testify the accuracy of  these deterministic equivalents (see \Cref{sec:num}).

We now study the choice of an optimal regularization parameter $\lambda$ using either the empirical predictive risk or its deterministic equivalent.

\begin{corollary}\label{cor:lambda_opt}
Suppose that \cref{hyp:theta} to \cref{hyp:var1} hold true. Then, both functions $\lambda \mapsto \hat r_\lambda^{test} (X_n)$ and $\lambda \mapsto r_\lambda^{test} (X_n)$ reach their minimum at $\lambda_{\ast} = \frac{\sigma^2p}{\alpha^2n}$ meaning that $\hat r_{\lambda_{\ast}}^{test}(X_n) \leq \hat r_\lambda^{test} (X_n)$ and
$r_{\lambda_{\ast}}^{test}(X_n) \leq r_\lambda^{test} (X_n)$, for any $\lambda > 0$.
\end{corollary}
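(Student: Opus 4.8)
The plan is to reduce each of the two risk functions to a single scalar function of $\lambda$ and then differentiate. First I would use that $\widetilde{S}_p$ is diagonal, so that $\Tr[\widetilde{S}_p\Delta[A]] = \Tr[\widetilde{S}_p A]$ for every square matrix $A$, together with the identities $Q_p(-\lambda) = (\widehat{\Sigma}_n + \lambda I_p)^{-1}$ and $Q_p'(-\lambda) = (\widehat{\Sigma}_n + \lambda I_p)^{-2}$. Setting $g(\lambda) := \Tr[\widetilde{S}_p(\widehat{\Sigma}_n + \lambda I_p)^{-1}]$, one has $g'(\lambda) = -\Tr[\widetilde{S}_p\Delta[Q_p'(-\lambda)]]$, so that the expression \eqref{eq:risk} for the predictive risk in Lemma \ref{lem:decomp} becomes $\hat{r}_\lambda^{test}(X_n) = \sigma^2 + \frac{\sigma^2}{n}g(\lambda) - \lambda\big(\frac{\alpha^2}{p}\lambda - \frac{\sigma^2}{n}\big)g'(\lambda)$. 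An analogous rewriting holds for $r_\lambda^{test}(X_n)$ with $g$ replaced by $G(\lambda) := \Tr[\widetilde{S}_p T_p(-\lambda)]$.

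Next I would differentiate in $\lambda$. Writing $a = \alpha^2/p$ and $b = \sigma^2/n$, the terms should collapse into the factorized form $\frac{d}{d\lambda}\hat{r}_\lambda^{test}(X_n) = (b - a\lambda)\big(2g'(\lambda) + \lambda g''(\lambda)\big)$, and likewise $\frac{d}{d\lambda}r_\lambda^{test}(X_n) = (b - a\lambda)\big(2G'(\lambda) + \lambda G''(\lambda)\big)$. The affine factor $b - a\lambda$ vanishes exactly at $\lambda_{\ast} = b/a = \frac{\sigma^2 p}{\alpha^2 n}$, is positive for $\lambda < \lambda_{\ast}$, and is negative for $\lambda > \lambda_{\ast}$; hence everything reduces to showing that the second factor is nonpositive on $(0,+\infty)$. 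For $g$ this is transparent: diagonalizing $\widehat{\Sigma}_n = \sum_{k=1}^p \mu_k u_k u_k^\top$ with $\mu_k \ge 0$ and $\{u_k\}$ orthonormal, one gets $g(\lambda) = \sum_k c_k/(\mu_k + \lambda)$ with $c_k = u_k^\top \widetilde{S}_p u_k \ge 0$, whence $2g'(\lambda) + \lambda g''(\lambda) = -2\sum_k c_k\mu_k/(\mu_k + \lambda)^3 \le 0$. Combining these facts, $\lambda \mapsto \hat{r}_\lambda^{test}(X_n)$ is nonincreasing on $(0,\lambda_{\ast}]$ and nondecreasing on $[\lambda_{\ast},+\infty)$, so it attains its global minimum at $\lambda_{\ast}$; this settles the first half of the statement.

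For $r_\lambda^{test}(X_n)$ I see two ways to conclude. The quickest is a limiting argument: since the inequality $\hat{r}_{\lambda_{\ast}}^{test}(X_n) \le \hat{r}_\lambda^{test}(X_n)$ holds deterministically for every realization and every $\lambda$, and Theorem \ref{theoremStat} gives $\hat{r}_\lambda^{test}(X_n) \to r_\lambda^{test}(X_n)$ as well as $\hat{r}_{\lambda_{\ast}}^{test}(X_n) \to r_{\lambda_{\ast}}^{test}(X_n)$ almost surely, intersecting the two probability-one events and passing to the limit yields $r_{\lambda_{\ast}}^{test}(X_n) \le r_\lambda^{test}(X_n)$ for every fixed $\lambda$. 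The alternative, which gives a self-contained derivation, is to repeat the argument of the previous paragraph with $G$ in place of $g$: here I would use that, by the Dyson equation \eqref{eq:Dyson}, each diagonal entry $T_p^{(j)}(-\lambda)$ is the Stieltjes transform of a probability measure $\rho_j$ supported on $\RR^+$, so that $G(\lambda) = \int_{\RR^+} (t + \lambda)^{-1}\, d\rho(t)$ with $\rho = \sum_{j=1}^p \tilde{\gamma}_j^2\, \rho_j$ a finite positive measure on $\RR^+$, which gives $2G'(\lambda) + \lambda G''(\lambda) = -2\int_{\RR^+} t(t + \lambda)^{-3}\, d\rho(t) \le 0$; the sign analysis then applies verbatim.

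The computation is essentially routine once the reformulation of the first paragraph is in place, and the technical core is the factorization of the derivative, which is a short algebraic manipulation. The only point requiring an external input is the nonpositivity of $2G'(\lambda) + \lambda G''(\lambda)$ in the self-contained route, which is not purely algebraic: it relies on the fact that the diagonal of the solution of the Dyson equation \eqref{eq:Dyson} consists of Stieltjes transforms of measures on $\RR^+$, a structural property coming from the matrix Dyson equation theory of \cite{alt2017local}. The limiting route avoids this at the price of not producing an independent proof for $r_\lambda^{test}(X_n)$.
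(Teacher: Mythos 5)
Your self-contained argument is correct and is essentially the paper's own proof: the paper likewise differentiates, obtains the factorization $2\bigl(\tfrac{\lambda\alpha^2}{p}-\tfrac{\sigma^2}{n}\bigr)$ times a nonnegative trace term, using $Q_p'=Q_p^2$, $Q_p''=2Q_p^3$ for the empirical risk and the representation $T_p(z)=\int_0^{+\infty}\frac{\mu(dw)}{w-z}$ with $\mu$ a positive matrix-valued measure (from \cite{HLJ07}, Theorem 2.4, rather than \cite{alt2017local}) for the deterministic equivalent, which matches your $2G'(\lambda)+\lambda G''(\lambda)=-2\int_{\RR^+} t(t+\lambda)^{-3}d\rho(t)\le 0$. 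One caution: your ``quickest'' limiting route does not actually prove the stated inequality for $r_\lambda^{test}(X_n)$, since both $\hat r_\lambda^{test}(X_n)$ and $r_\lambda^{test}(X_n)$ depend on $n$ and neither converges individually in general; passing to the limit in $\hat r_{\lambda_\ast}^{test}(X_n)\le \hat r_\lambda^{test}(X_n)$ only yields $\limsup_{n}\bigl(r_{\lambda_\ast}^{test}(X_n)-r_\lambda^{test}(X_n)\bigr)\le 0$, an asymptotic statement, whereas the corollary asserts the exact inequality for each fixed $n,p$. So the second, self-contained route is the one that establishes the result as written, and it is the paper's argument.
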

Note that although the expression of $r_\lambda^{test} (X_n)$ depends on the variance profile, this is not the case for the optimal value of $\lambda_\ast$ that minimizes the predictive risk as shown by \cref{cor:lambda_opt}. Moreover, the optimal value $\lambda_\ast$ is also the one minimizing the predictive risk in the framework of \cite{17-AOS1549} with a matrix $X_n$ of features made of iid rows.  Hence, \cref{cor:lambda_opt} shows that an optimal regularization parameter derived in the standard iid setting remains valid under the more general non-iid framework using a variance profile, thereby reinforcing the universality of the optimal value $\lambda_{\ast}$ for predictive risk minimization.  Interestingly, both empirical and deterministic predictive risk achieve their minimum at the same optimal regularization parameter. Nevertheless, this strategy leads to the selection of $\lambda$ that requires to have access to test data or prior knowledge of the signal strength $\alpha$. Our GCV approach is closely related in spirit to \cite{luo2024roti}, despite the different assumptions considered in the two papers. Both works use high-dimensional random-matrix asymptotics to adapt GCV to ridge regression beyond the standard i.i.d. framework. While we study variance-profiled designs, \cite{luo2024roti} focuses on right-rotationally invariant data, allowing for dependence and heavy-tailed covariates. In both cases, the criterion is built from a spectral asymptotic analysis of the risk; the main difference is that, in their setting, standard GCV is biased, which leads to the corrected ROTI-GCV criterion.

Building upon the deterministic equivalents derived above and the use of the GCV criterion \eqref{eq:hatGCV}, one may also adopt the following alternative strategy for selecting the regularization parameter $\lambda$. Introducing the following deterministic equivalent
$$
 G(\lambda) = \frac{r_\lambda^{\text{train}}(X_n)}{\left(1 - df_1(\lambda)\right)^2},
$$
one obtains that
$$
\lim_{n \to \infty, \; p/n \to c} \big| \hat G(\lambda) - G(\lambda) \big| = 0
\qquad \text{almost surely}.
$$
Hence, in the proportional asymptotic regime, the empirical GCV criterion concentrates around a deterministic limit that depends on the variance profile only through the deterministic equivalents of the training risk and the degrees of freedom.
\begin{figure}[htbp]
\begin{center}
{\subfigure[]{\includegraphics[width = 0.49\textwidth]{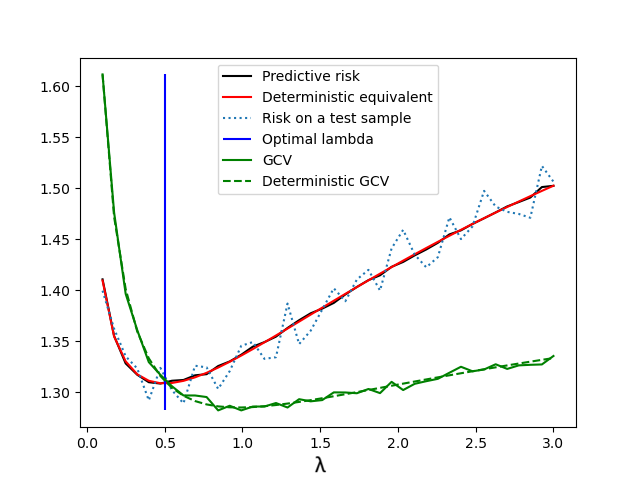}}}
\hfill
{\subfigure[]{\includegraphics[width = 0.49\textwidth]{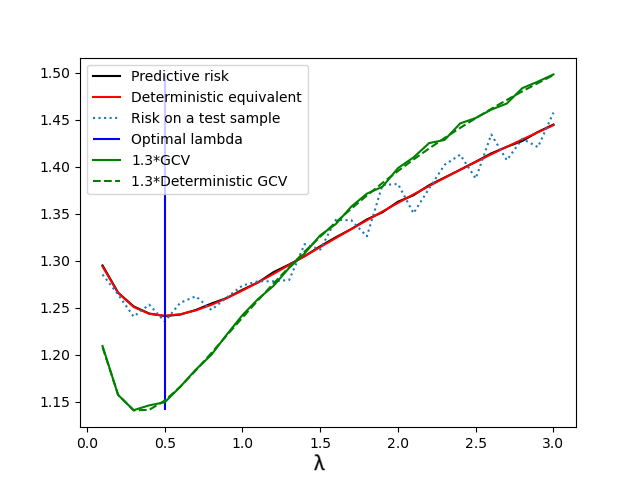}}}

{\subfigure[]{\includegraphics[width = 0.49\textwidth]{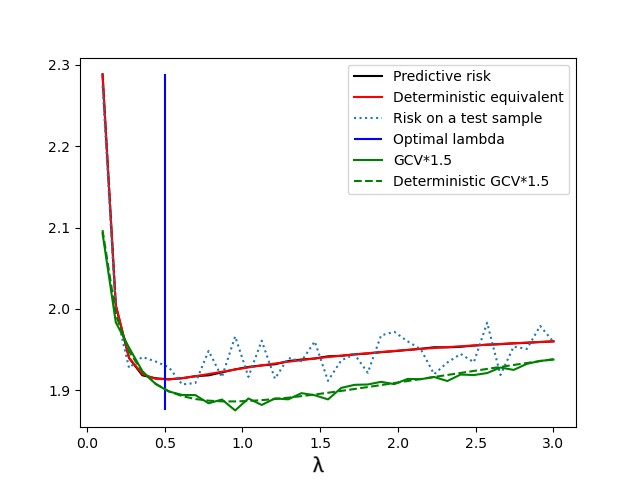}}}
\end{center}
\caption{ Comparison of the predictive risk, the GCV and their deterministic equivalents for several variances profiles: (a) block variance  profile, (b) alternated columns variance profile, (c) polynomial variance profile, as described in Section \ref{sec:num} on numerical experiments, with $\lambda $ ranging from $0.1$ to $3$, $\alpha = 1$, $\sigma = 1$, $n = 1000$ and $p = 500$.}  
\label{fig:GCV}
\end{figure}
 This naturally leads to the selection of a regularization parameter defined as
\begin{equation}
\lambda^{\mathrm{GCV}} \in \arg\min_{\lambda > 0} G(\lambda).
\end{equation}
 However, this estimator is not guaranteed to recover the oracle parameter $\lambda_\ast$ derived in \Cref{cor:lambda_opt} as illustrated in \Cref{fig:GCV}, where the predictive risk and the GCV criterion attain their minima at distinct, albeit nearby, values of $\lambda$. Hence, while the oracle parameter $\lambda_\ast$ is universal and does not depend on the variance profile, the GCV-selected parameter $\lambda^{\mathrm{GCV}}$ depends on it.

Since $G(\lambda)$ is defined in terms of the deterministic equivalent $\widetilde T(-\lambda)$, no explicit closed-form expression for $G(\lambda)$ is available in the general case. When the variance profile is quasi doubly stochastic, one has that $T_p(z) = m_p(z) I_p $ and
$
df_1(\lambda) =   \left( 1+\frac{1}{ \tilde{m}_n(- \lambda)}\right)^{-1},
$
where $m_p(\cdot)$ and $\tilde{m}_n(\cdot)$ are  Stieltjes transforms related to the  Marchenko-Pastur distribution satisfying \eqref{eq:m} and \eqref{eq:tildem}. Consequently, under the assumption of a  quasi doubly stochastic variance profile, we thus obtain that
$$
G(\lambda) =   \frac{1+\tilde{m}_n(- \lambda)}{\tilde{m}_n(- \lambda)}  \left( \frac{\lambda^2\alpha^2p}{n}\left(m_p(-\lambda) - \lambda m_p'(-\lambda)\right) + \frac{\lambda^2\sigma^2p}{n} m_p'(-\lambda) \right),
$$
but finding a closed-form minimizer of this functional remains unclear.

 \subsection{Deterministic equivalents of the  training  and predictive risks for $\lambda = 0$} \label{sec:predrisk2}
 
Note that the previous results hold for $\lambda > 0$, however one can extends the study for $\lambda = 0$, by considering the following additional hypotheses : 
\begin{hypothesis}\label[hypothesis]{hyp:var2}
$\exists \ \gamma_{\mathrm{min}} > 0$ s.t.
$
\forall n\geq 1,\underset{\substack{1\leq i \leq n \\ 1 \leq j \leq n}}{\mathrm{min}}\lbrace|\gamma_{ij}^{(n)}|,| \tilde\gamma_{j}^{(n)} |\rbrace \geq \gamma_{\mathrm{min}}.
$
\end{hypothesis}
\begin{hypothesis}\label[hypothesis]{hyp:dim_rec}
$\exists \ d_\ast > 0$ s.t. 
$
|\frac{p}{n} - 1| \geq d_\ast 
$
for any values of $n$ and $p$.
\end{hypothesis}
It is proved in \cite{alt2017local} that \cref{hyp:var2,hyp:dim_rec} ensure that the spectrum of $\frac{1}{n}X_n^\top X_n$ is bounded away from $0$, which is a necessary condition to study the $\lambda = 0$ case. Moreover, this case is of interest since it gives birth to the double descent phenomenon described in \Cref{sec:num}.

\begin{lemma}\label{lem:limzero}
Suppose that \cref{hyp:Z} to \cref{hyp:dim_rec} hold true, if $\frac{p}{n} < 1$ then $T_p(-\lambda)$ and its derivative, $T_p'(-\lambda)$ admit a limit when $\lambda$ tends to $0$. Indeed, there exists a constant $ \tau > 0$ such that 
$$
\lim_{\lambda \rightarrow 0, \; \lambda > 0} T_p (-\lambda) = \int_{ \tau}^{+\infty} \frac{\mu (dw)}{w} \quad \mathrm{and} \quad \lim_{\lambda \rightarrow 0, \; \lambda > 0} T_p'(-\lambda) = \int_{\tau}^{+\infty} \frac{\mu (dw)}{w^2},
$$
where $\mu = (\mu_{ij})$ is a positive $p\times p$ matrix valued-measure such that $\mu_{ii}$ is a probability measure and $\mu_{ij}$ is a null measure if $i\neq j$. We denote these limits by $T_p(0^-) = \lim\limits_{\lambda \rightarrow 0} T_p(-\lambda)$ and $T_p'(0^-) = \lim\limits_{\lambda \rightarrow 0} T_p'(-\lambda)$.

On the other hand, if $\frac{p}{n} > 1$ then $\kappa (\lambda)$ and its derivative $\kappa '(\lambda)$ also admit a limit when $\lambda$ tends to $0$ and we denote these limits by 
$$
 \kappa(0^+) = \lim_{\lambda \rightarrow 0, \; \lambda > 0}  \kappa(\lambda)  \quad \mathrm{and} \quad \kappa '(0^+) = \lim_{\lambda \rightarrow 0, \; \lambda > 0}\kappa '(\lambda),
$$
were $\kappa(\lambda)$ is the diagonal matrix defined in \Cref{eq:kappa}.
\end{lemma}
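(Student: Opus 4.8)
The plan is to exploit the fact that $T_p(z)$ (respectively $\widetilde{T}_n(z)$), being the unique solution of the Dyson equation \eqref{eq:Dyson}, is the Stieltjes transform of a matrix-valued measure whose support is, under Assumptions \ref{hyp:var2} and \ref{hyp:dim_rec}, bounded away from the origin uniformly in $n$, so that $\lambda \mapsto T_p(-\lambda)$ extends analytically across $\lambda = 0$. First I would record, from the general theory of the Dyson equation \cite{alt2017local,Ajanki2017}, that there is a positive $p \times p$ matrix-valued measure $\mu = (\mu_{ij})$ supported on $\RR^+$ with $T_p(z) = \int_{\RR^+}(w - z)^{-1}\,\mu(dw)$ entrywise for $z \in \CC \setminus \RR^+$; since $T_p(z)$ is diagonal for every $z$, the measures $\mu_{ij}$ with $i \neq j$ vanish, and since $-z\,T_p^{(j)}(z) \to 1$ as $|z| \to \infty$ (read off the Dyson equation) each $\mu_{jj}$ is a probability measure. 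The same holds for $\widetilde{T}_n(z)$ with an $n \times n$ diagonal matrix-valued measure $\tilde{\mu} = (\tilde{\mu}_{ii})$ with $\tilde{\mu}_{ii}$ probability measures.

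The key analytic input I would then invoke is the spectral-edge estimate of \cite{alt2017local}: under Assumptions \ref{hyp:var2} and \ref{hyp:dim_rec}, the self-consistent spectrum attached to whichever of the two matrices $\widehat{\Sigma}_n$, $\widetilde{\Sigma}_n$ is the smaller one is bounded away from $0$, uniformly in $n$. Concretely, when $\frac{p}{n} < 1$ the matrix $\widehat{\Sigma}_n$ is the smaller one and there exist constants $0 < \tau \le \tau'$, independent of $n$, with $\mathrm{supp}(\mu_{jj}) \subset [\tau, \tau']$ for all $1 \le j \le p$; when $\frac{p}{n} > 1$, $\widehat{\Sigma}_n$ has a nontrivial kernel (so $\mu$ carries an atom at $0$) but $\widetilde{\Sigma}_n$ is invertible and there exist $0 < \tilde{\tau} \le \tilde{\tau}'$, independent of $n$, with $\mathrm{supp}(\tilde{\mu}_{ii}) \subset [\tilde{\tau}, \tilde{\tau}']$ for all $1 \le i \le n$. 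I would quote these bounds rather than re-derive them.

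Granting the two facts above, the conclusions follow by elementary manipulations. In the case $\frac{p}{n} < 1$: since $\mathrm{supp}(\mu) \subset [\tau, \infty)$, the map $z \mapsto T_p(z) = \int_{\tau}^{\infty}(w - z)^{-1}\,\mu(dw)$ is analytic on $\CC \setminus [\tau, \infty)$, in particular near $0$; differentiating under the integral (legitimate since $w \mapsto (w + \lambda)^{-1}$ and $w \mapsto (w + \lambda)^{-2}$ are bounded by $\tau^{-1}$ and $\tau^{-2}$ on $[\tau, \infty)$, uniformly in $\lambda \ge 0$) and letting $\lambda \downarrow 0$ yields $T_p(-\lambda) \to \int_{\tau}^{\infty} w^{-1}\,\mu(dw)$ and $T_p'(-\lambda) \to \int_{\tau}^{\infty} w^{-2}\,\mu(dw)$, both finite, which are the asserted formulas for $T_p(0^-)$ and $T_p'(0^-)$. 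In the case $\frac{p}{n} > 1$: the same argument applied to $\widetilde{T}_n$ gives finite limits $\widetilde{T}_n(0^-) = \lim_{\lambda \downarrow 0}\widetilde{T}_n(-\lambda)$ and $\widetilde{T}_n'(0^-) = \lim_{\lambda \downarrow 0}\widetilde{T}_n'(-\lambda)$, with $\widetilde{T}_n^{(i)}(0^-) = \int_{\tilde{\tau}}^{\infty} w^{-1}\,\tilde{\mu}_{ii}(dw) \ge 1/\tilde{\tau}' > 0$. Writing $\kappa_j(\lambda) = \Tr[\widetilde{D}_j]\big/ \Tr[\widetilde{D}_j\widetilde{T}_n(-\lambda)]$ with numerator independent of $\lambda$ and denominator $\Tr[\widetilde{D}_j\widetilde{T}_n(-\lambda)] = \sum_{i=1}^{n}\gamma_{ij}^2\,\widetilde{T}_n^{(i)}(-\lambda) \to \sum_{i=1}^{n}\gamma_{ij}^2\,\widetilde{T}_n^{(i)}(0^-)$, which is strictly positive since $\gamma_{ij}^2 \ge \gamma_{\mathrm{min}}^2$ by Assumption \ref{hyp:var2}, gives convergence of $\kappa_j(\lambda)$; and since $\frac{d}{d\lambda}\widetilde{T}_n(-\lambda) = -\widetilde{T}_n'(-\lambda)$, differentiating the quotient expresses $\kappa_j'(\lambda)$ as $\Tr[\widetilde{D}_j]\,\Tr[\widetilde{D}_j\widetilde{T}_n'(-\lambda)]\big/ \Tr[\widetilde{D}_j\widetilde{T}_n(-\lambda)]^2$, a ratio of convergent quantities with nonvanishing limiting denominator, so $\kappa_j'(\lambda)$ converges too; taking $\kappa(0^+)$ and $\kappa'(0^+)$ to be the resulting diagonal matrices closes the argument.

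The one genuinely non-routine ingredient is the uniform-in-$n$ lower bound on the support of the self-consistent measure; everything else is dominated convergence and the arithmetic of ratios. I expect the care there to be twofold: applying the edge estimates of \cite{alt2017local} to the correct matrix according to the sign of $\frac{p}{n} - 1$ (only the smaller of $\widehat{\Sigma}_n$, $\widetilde{\Sigma}_n$ has its spectrum repelled from $0$), and verifying that the resulting constant $\tau$ (respectively $\tilde{\tau}$) does not depend on $n$ — which is exactly what is needed for these limits to be usable in the subsequent risk analysis as $n \to \infty$ with $p/n \to c$.
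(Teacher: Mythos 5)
Your argument is correct and follows essentially the same route as the paper for the case $p/n<1$: represent $T_p(-\lambda)=\int_{\RR^+}(w+\lambda)^{-1}\mu(dw)$ via the matrix-valued Stieltjes representation (the paper quotes \cite{HLJ07}[Theorem 2.4] for this), invoke the spectral-gap results of \cite{alt2017local} under Assumptions \ref{hyp:var2}--\ref{hyp:dim_rec} to get a $\tau>0$ with no mass on $(0,\tau]$ and no atom at $0$ when $p<n$, and conclude by dominated convergence for both $T_p(-\lambda)$ and $T_p'(-\lambda)$. One small difference in execution: the paper obtains the per-entry support gap $\mu_{ii}((0,\tau])=0$ by a short positivity argument from the averaged measure $\nu_p=\frac1p\Tr\mu$ (whose gap and vanishing atom are what \cite{alt2017local}[Theorems 2.1 and 2.9] directly provide), whereas you quote entrywise support bounds outright; your version is fine but, if the cited results are stated only for the self-consistent density of states, the paper's one-line deduction from $\nu_p((0,\tau])=0$ and positivity of each $\mu_{ii}$ is the cleaner way to justify it. For $p/n>1$ you go further than the paper's written proof, which stops after the $T_p$ case and leaves the $\kappa$ part implicit (it is only addressed "symmetrically" in the proof of Corollary \ref{CorSto}): your treatment — apply the same gap argument to $\widetilde T_n$ attached to the invertible matrix $\widetilde\Sigma_n$, note that $\Tr[\widetilde D_j\widetilde T_n(-\lambda)]=\sum_i\gamma_{ij}^2\widetilde T_n^{(i)}(-\lambda)$ converges to a strictly positive limit (using Assumption \ref{hyp:var2}, or simply that each $\tilde\mu_{ii}$ is a probability measure supported in $[\tilde\tau,\infty)$ with bounded support under Assumption \ref{hyp:var1}), and conclude for $\kappa_j(\lambda)$ and $\kappa_j'(\lambda)=\Tr[\widetilde D_j]\,\Tr[\widetilde D_j\widetilde T_n'(-\lambda)]/\Tr[\widetilde D_j\widetilde T_n(-\lambda)]^2$ by the quotient rule — is a complete and correct way to fill in that half of the statement.
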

This lemma asserts that if $p<n$ (respectively $p>n$) then  $T_p(-\lambda)$ and $T'_p(-\lambda)$ (respectively $\kappa (\lambda)$) admit limits whenever $\lambda$ goes to $0$. These limits ensures that the deterministic equivalents provided by \cref{theoremStat} exist in the ridge (less) case, that is $\lambda = 0$ (See \cref{CorStat}).

\begin{corollary}\label{CorSto}
Suppose that \cref{hyp:Z} to \cref{hyp:dim_rec} hold true and that the variance profile $\Gamma_n$ is quasi doubly stochastic. Then, 
\begin{itemize}
\item[-] if $\frac{p}{n} < 1$, one has that
$
T_p(0^-) = m_p(0)I_p,
$
\item[-] if $\frac{p}{n} > 1$, 
$
\kappa (0^+) = \frac{1}{\tilde{m}_n(0)}I_p\quad \mathrm{and} \quad \kappa '(0^+) = \frac{\tilde{m}_n'(0)}{\tilde{m}_n^2(0)}I_p,
$
\end{itemize} 
where $m_p(.)$, resp.\ $\tilde{m}_n(.)$, is the Stieltjes transform of the Marchenko-Pastur distribution with parameter $p/n$, resp.\ $n/p$.
\end{corollary}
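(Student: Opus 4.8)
The plan is to specialize the general fixed-point description of $T_p(z)$ and $\kappa(\lambda)$ to the quasi doubly stochastic case, where everything collapses to the scalar Marchenko–Pastur equations, and then pass to the limit $\lambda \to 0^+$ using Lemma \ref{lem:limzero}. Recall that when $\Gamma_n$ is quasi doubly stochastic, the discussion following Theorem \ref{prop:Stieltjes} and Equation \eqref{eq:m}--\eqref{eq:tildem} already tell us that $T_p(z) = m_p(z) I_p$ and $\widetilde T_n(z) = \tilde m_n(z) I_n$ for every $z \in \CC \setminus \RR^+$, where $m_p$ is the Stieltjes transform of the Marchenko–Pastur law with parameter $p/n$ and $\tilde m_n$ the one with parameter $n/p$. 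So the corollary amounts to two things: (i) showing $m_p$ extends continuously to $z=0$ when $p/n < 1$ with a finite nonnegative value $m_p(0)$, and (ii) showing $\tilde m_n$ and $\tilde m_n'$ extend continuously to $z=0$ when $p/n > 1$, then reading off $\kappa(0^+)$ and $\kappa'(0^+)$ from Equation \eqref{eq:kappa}.

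For the first bullet ($p/n<1$): by Lemma \ref{lem:limzero} we already know $T_p(-\lambda)$ converges as $\lambda \to 0^+$; by the identity $T_p(-\lambda) = m_p(-\lambda) I_p$ valid for all $\lambda>0$, the limit must be a scalar matrix, and its scalar is $m_p(0^-) := \lim_{\lambda \to 0^+} m_p(-\lambda)$. It remains to identify this limit with the standard value $m_p(0)$ of the Marchenko–Pastur Stieltjes transform at the origin; since the support of the Marchenko–Pastur law with ratio $<1$ is bounded away from $0$ (its lower edge is $(1-\sqrt{p/n})^2 > 0$), the defining integral $\int \frac{d\mu_{MP}(t)}{t-z}$ is analytic in a neighborhood of $0$, so $m_p(0) = \int \frac{d\mu_{MP}(t)}{t}$ is finite and equals the limit. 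This gives $T_p(0^-) = m_p(0) I_p$.

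For the second bullet ($p/n>1$): here the companion law $\tilde m_n$ has ratio $n/p < 1$, so by the same edge argument its support is bounded away from $0$ and both $\tilde m_n$ and $\tilde m_n'$ extend analytically through $z=0$, with $\tilde m_n(0) = \int \frac{d\tilde\mu_{MP}(t)}{t}$ and $\tilde m_n'(0) = \int \frac{d\tilde\mu_{MP}(t)}{t^2}$ finite and positive. Now plug into Equation \eqref{eq:kappa}: in the quasi doubly stochastic case $\Tr[\widetilde D_j] = n$ and $\Tr[\widetilde D_j \widetilde T_n(-\lambda)] = n\,\tilde m_n(-\lambda)$, so $\kappa_j(\lambda) = 1/\tilde m_n(-\lambda)$ for every $j$, i.e.\ $\kappa(\lambda) = \tilde m_n(-\lambda)^{-1} I_p$. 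Letting $\lambda \to 0^+$ and using $\tilde m_n(0) > 0$ gives $\kappa(0^+) = \tilde m_n(0)^{-1} I_p$. Differentiating $\kappa(\lambda) = \tilde m_n(-\lambda)^{-1} I_p$ in $\lambda$ gives $\kappa'(\lambda) = \tilde m_n'(-\lambda)\,\tilde m_n(-\lambda)^{-2} I_p$ (the two sign flips from $\frac{d}{d\lambda}\tilde m_n(-\lambda) = -\tilde m_n'(-\lambda)$ and the chain rule on the reciprocal cancel), and taking $\lambda \to 0^+$ yields $\kappa'(0^+) = \tilde m_n'(0)\,\tilde m_n(0)^{-2} I_p$, which is the claimed expression.

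The only genuinely substantive point — and the step I would be most careful about — is justifying that the limit obtained from Lemma \ref{lem:limzero} (defined there through the matrix-valued measure $\mu$ restricted to $[\tau,\infty)$) really coincides with the naive boundary value $m_p(0)$ of the Marchenko–Pastur transform, and similarly for $\tilde m_n$; this is where Assumptions \ref{hyp:var2}--\ref{hyp:dim_rec}, which force the spectrum away from the origin and hence $\tau > 0$, do the work, allowing one to move the limit inside the integral by dominated convergence (the integrand $1/(t+\lambda)$ is bounded by $1/\tau$ uniformly in $\lambda \ge 0$ on the support). Everything else is bookkeeping with the already-established scalar reduction of the Dyson equation.
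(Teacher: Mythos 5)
Your proof is correct and follows essentially the same route as the paper: reduce to the scalar Marchenko--Pastur quantities $m_p$ and $\tilde m_n$ via the quasi doubly stochastic structure, use that the relevant spectral measure puts no mass near the origin to pass to the limit $\lambda \to 0^+$ by dominated convergence, and read off $\kappa(0^+)$ and $\kappa'(0^+)$ from the identity $\kappa(\lambda) = \tilde m_n(-\lambda)^{-1} I_p$. The only minor difference is that you justify the spectral gap at zero through the explicit Marchenko--Pastur support edges, whereas the paper reuses the decomposition of $\nu_p$ and the gap $\tau>0$ established in the proof of Lemma \ref{lem:limzero} via \cite{alt2017local}; both justifications are valid.
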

The deterministic equivalents provided in \cref{theoremStat} heavily depend on $T_p(-\lambda)$ and $\kappa(\lambda)$ and \cref{CorSto} ensures that these values can be evaluated for $\lambda = 0$ as long as $p\neq 0$. Hence, one can understand the behavior of $r_\lambda^{test} (X_n)$ when $\lambda \rightarrow 0$, that depends on the ratio $\frac{p}{n}$. The following corollary exploits \cref{CorSto} in order to describe this behavior that gives birth to the double descent phenomenon as we will see in \Cref{sec:num}.

\begin{corollary}\label{CorStat}
Suppose that \cref{hyp:theta} to \cref{hyp:var1} hold true, then the limit of the deterministic equivalent $r_\lambda^{test} (X_n)$ of the predictive risk as $\lambda \rightarrow +\infty$ is 
$$
\lim_{\lambda \rightarrow +\infty} r_\lambda^{test}(X_n) = \frac{\alpha^2}{p}\mathrm{Tr}[\widetilde{S}_p ] + \sigma^2,
$$
Moreover, supposing that \cref{hyp:theta} to \cref{hyp:dim_rec} hold true, then  the limit of the deterministic equivalent  $r_\lambda^{test} (X_n)$ , as $\lambda \rightarrow 0$,  is as follows
\begin{itemize}
\item[-] if $\frac{p}{n} < 1$ then 
$
\lim_{\lambda \rightarrow 0^+} r_\lambda^{test}(X_n) =  \frac{\sigma^2}{n}\mathrm{Tr}[\widetilde{S}_p  T_p(0^-)] + \sigma^2,
$
\item[-] if $\frac{p}{n} > 1$ then 
$
\lim_{\lambda \rightarrow 0^+} r_\lambda^{test}= \frac{\alpha^2}{p}\mathrm{Tr}[\widetilde{S}_p \kappa (0^+) (\Sigma_n + \kappa (0^+))^{-1}] + \frac{\sigma^2}{n}\mathrm{Tr}[\kappa '(0^+)\widetilde{S}_p \Sigma_n (\Sigma_n + \kappa (0^+))^{-2}] + \sigma^2,
$

where  $\kappa'(\lambda)$ denotes the derivative with respect to $\lambda$ of the diagonal matrix $\kappa(\lambda)$ defined in \Cref{eq:kappa}.
\end{itemize}
\end{corollary}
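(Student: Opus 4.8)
The plan is to derive all three limits directly from the closed-form deterministic equivalent \eqref{eq:equiv_det} of Theorem \ref{theoremStat}, treating it as a purely analytic object in $\lambda$ (for fixed dimensions $n,p$ and fixed variance profile). First I would regroup \eqref{eq:equiv_det} by collecting the two terms that carry the factor $T_p'(-\lambda)$:
\begin{equation*}
r_\lambda^{test}(X_n) = \sigma^2 + \frac{\sigma^2}{n}\Tr\!\Big[\widetilde{S}_p\big(T_p(-\lambda) - \lambda T_p'(-\lambda)\big)\Big] + \frac{\alpha^2}{p}\Tr\!\Big[\widetilde{S}_p\,\lambda^2 T_p'(-\lambda)\Big].
\end{equation*}
The cornerstone of the argument is the algebraic identity $\lambda\,T_p(-\lambda) = \kappa(\lambda)\big(\Sigma_n+\kappa(\lambda)\big)^{-1}$, which I would obtain at once from the fixed-point relation $T_p^{(j)}(z) = -\big(z(1+\tfrac1n\Tr[\widetilde{D}_j\widetilde{T}_n(z)])\big)^{-1}$ of Theorem \ref{prop:Stieltjes}, the elementary identity $(\Sigma_n)_{jj} = \tfrac1n\Tr[\widetilde{D}_j]$, and the definition \eqref{eq:kappa} of $\kappa_j(\lambda)$; it is the entrywise refinement of the relation already visible (after taking traces) in Proposition \ref{prop:DOF}. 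Since $\kappa(\lambda)$ and $\Sigma_n$ are diagonal, hence commute, differentiating this identity in $\lambda$ — recalling that $T_p'$ denotes $\tfrac{d}{dz}$, so $\tfrac{d}{d\lambda}T_p(-\lambda) = -T_p'(-\lambda)$ — yields the two further identities
\begin{equation*}
T_p(-\lambda) - \lambda T_p'(-\lambda) = \kappa'(\lambda)\,\Sigma_n\big(\Sigma_n+\kappa(\lambda)\big)^{-2}, \qquad \lambda^2 T_p'(-\lambda) = \kappa(\lambda)\big(\Sigma_n+\kappa(\lambda)\big)^{-1} - \lambda\,\kappa'(\lambda)\,\Sigma_n\big(\Sigma_n+\kappa(\lambda)\big)^{-2}.
\end{equation*}

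For the limit $\lambda\to+\infty$ I would read off from the Dyson equation \eqref{eq:Dyson} that the correction term $\tfrac1n\Gamma_n^\top\big(1+\tfrac1n\Gamma_n T_p(-\lambda)\big)^{-1}$ has nonnegative entries bounded by $\gamma_{\mathrm{max}}^2$ (using $T_p(-\lambda)>0$ and Assumption \ref{hyp:var1}), hence $\tfrac{1}{\lambda+\gamma_{\mathrm{max}}^2}\le T_p^{(j)}(-\lambda)\le\tfrac1\lambda$, so $\lambda T_p(-\lambda)\to I_p$; differentiating \eqref{eq:Dyson} and using the unconditional bound $\tfrac1p\Tr[T_p'(-\lambda)] = \int (w+\lambda)^{-2}\,\nu_p(dw) \le \lambda^{-2}$ gives in the same way $\lambda^2 T_p'(-\lambda)\to I_p$ and $\lambda T_p'(-\lambda)\to 0$. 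Substituting these into the regrouped formula, the middle term and the $\lambda T_p'$ contribution vanish while $\tfrac{\alpha^2}{p}\Tr[\widetilde{S}_p\,\lambda^2 T_p'(-\lambda)]\to\tfrac{\alpha^2}{p}\Tr[\widetilde{S}_p]$, giving the stated limit. For $\lambda\to 0^+$ in the under-parametrised regime $p/n<1$, Lemma \ref{lem:limzero} supplies the finite limits $T_p(0^-)$ and $T_p'(0^-)$; the third term of \eqref{eq:equiv_det} carries the prefactor $\lambda\big(\tfrac{\lambda\alpha^2}{p}-\tfrac{\sigma^2}{n}\big)\to 0$ and therefore drops out, leaving $\sigma^2+\tfrac{\sigma^2}{n}\Tr[\widetilde{S}_p T_p(0^-)]$.

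The hard part will be the over-parametrised regime $p/n>1$ as $\lambda\to 0^+$: there $\widehat\Sigma_n$ is rank deficient, so $T_p(-\lambda)$ diverges like $\lambda^{-1}$ and $T_p'(-\lambda)$ like $\lambda^{-2}$, and one cannot pass to the limit term by term in \eqref{eq:equiv_det}. This is exactly what the $\kappa$-identities are designed for: substituting them recasts the deterministic risk entirely in terms of $\kappa(\lambda)$ and $\kappa'(\lambda)$,
\begin{equation*}
r_\lambda^{test}(X_n) = \sigma^2 + \frac{\sigma^2}{n}\Tr\!\Big[\widetilde{S}_p\,\kappa'(\lambda)\Sigma_n(\Sigma_n+\kappa(\lambda))^{-2}\Big] + \frac{\alpha^2}{p}\Tr\!\Big[\widetilde{S}_p\,\kappa(\lambda)(\Sigma_n+\kappa(\lambda))^{-1}\Big] - \frac{\alpha^2\lambda}{p}\Tr\!\Big[\widetilde{S}_p\,\kappa'(\lambda)\Sigma_n(\Sigma_n+\kappa(\lambda))^{-2}\Big],
\end{equation*}
and by Lemma \ref{lem:limzero} the quantities $\kappa(\lambda),\kappa'(\lambda)$ admit the finite limits $\kappa(0^+),\kappa'(0^+)$. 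It then remains to justify the interchange of limit and trace, for which I would check that $\Sigma_n+\kappa(0^+)$ is invertible; this follows since $\kappa(0^+)$ is positive definite, itself a consequence of $\widetilde{T}_n(-\lambda)$ remaining bounded as $\lambda\to 0$ because the spectrum of $\widehat\Sigma_n$, equivalently of $\widetilde\Sigma_n$, is bounded away from $0$ under Assumptions \ref{hyp:var2} and \ref{hyp:dim_rec} (see \cite{alt2017local}). Letting $\lambda\to 0^+$ then annihilates the last term and produces exactly the claimed expression. The one genuinely delicate point is thus the derivation and differentiation of the $\kappa$-identity that absorbs the $\lambda\to 0$ blow-up in the over-parametrised case, together with the verification that the mere existence of limits provided by Lemma \ref{lem:limzero}, combined with the lower spectral bound, is enough to pass to the limit.
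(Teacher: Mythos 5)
Your proposal is correct and follows essentially the same route as the paper: the heart of the argument in the over-parameterized regime is the identity $T_p(-\lambda)=\tfrac{1}{\lambda}\,\kappa(\lambda)\big(\Sigma_n+\kappa(\lambda)\big)^{-1}$ obtained from Equation \eqref{eq:T} and Equation \eqref{eq:kappa}, differentiated in $\lambda$ to rewrite the risk in terms of $\kappa(\lambda),\kappa'(\lambda)$ and then combined with Lemma \ref{lem:limzero}, exactly as in the paper, and the $p/n<1$ case is handled identically. The only deviation is the $\lambda\to+\infty$ limit, where the paper applies dominated convergence to the matrix-valued representation $T_p(-\lambda)=\int\frac{\mu(dw)}{w+\lambda}$ with $\mu(\mathbb R^+)=I_p$ while you use Dyson-equation bounds; your entrywise claim $\lambda^2 T_p'(-\lambda)\to I_p$ is not fully justified by the trace-level bound alone, but it follows immediately from that same integral representation (or from the entrywise bounds $\lambda^{-2}\ge T_p'^{(j)}(-\lambda)\ge (T_p^{(j)}(-\lambda))^2$), so this is a small, easily patched detail.
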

In the case $\lambda \rightarrow 0$, \cref{CorStat} yields a deterministic equivalent of the predictive risk of the minimum norm least-square estimator $\hat{\theta}$. Then, it can be seen that the risk  $r_0^{test}(X_n)$ exhibit two different  behaviors depending on the value of the ratio $p/n$ with respect to one. Moreover, it can be seen in \Cref{sec:num} that $r_0^{test}(X_n)$ seems to be an accurate estimation of the actual predictive risk in the ridgeless case, namely $\hat{r}_0^{test}(X_n)$.

If $\frac{p}{n} < 1$, then  $\hat{\theta}_0$ is equivalent to the ordinary least-square estimator $\hat{\theta} = (X_n^\top  X_n)^{-1}X_n^\top Y_n$ which is known to be unbiased, and the risk  $r_0(X_n)$ is thus only composed of a variance term. Moreover, if the variance profile is assumed to be quasi-bistochastic, then, given that
$$
T_p(0^-) = m_p(0)I_p \quad \mbox{and} \quad m_p(0) = \frac{1}{1-p/n},
$$
it follows that
$
r_0(X_n) =  \frac{\sigma^2}{1-\frac{p}{n}} \times \frac{1}{n}\mathrm{Tr}[\widetilde{S}_p] + \sigma^2 = \sigma^2 \frac{\frac{p}{n}}{1-\frac{p}{n}}  + \sigma^2 \rightarrow \sigma^2 \frac{c}{1-c}  + \sigma^2, 
$
as $n \to \infty, \; \frac{p}{n} \to c$, which corresponds, when $c < 1$, to the known asymptotic limit of the predictive risk of least squares estimation  for iid data 
 when the entries of the features matrix $X_n$ are iid centered random variables with variances equal to $1$, see e.g.\ \cite{21-AOS2133}[Proposition 2].
 
If $p/n > 1$, then the deterministic equivalent of the predictive risk is composed of a bias term and a variance term. If the variance profile is assumed to be quasi-bistochastic, the values of these two terms can be made more explicity as follows. In this setting, given that $\Sigma_n = I_p$, $\tilde{m}_p(0) = \frac{1}{p/n-1}$ and $\tilde{m}'_p(0) = \frac{p/n}{(p/n-1)^3}$, one has that
$$
\kappa (0^+) = (p/n-1)I_p\quad \mathrm{and} \quad \kappa '(0^+) = \frac{p/n}{p/n-1}I_p.
$$
Hence, using that $\frac{1}{n}\mathrm{Tr}[\widetilde{S}_p] = p/n$, we finally obtain that
$
r_0(X_n) = \alpha^2 \left(1 -\frac{n}{p} \right) + \sigma^2 \frac{1}{p/n-1}  \rightarrow \alpha^2 \left(1 -\frac{1}{c} \right) + \sigma^2 \frac{1}{c-1},
$
as $n \to \infty, \; \frac{p}{n} \to c$, which corresponds, when $c > 1$, to known results on the bias-variance decomposition of the asymptotic limit of the predictive risk  of the minimum norm least squares estimator  for iid data 
 when the entries of $X_n$ are iid centered random variables with variance $1$, see e.g.\ \cite{21-AOS2133}[Theorem 1].

Beyond the assumption of a quasi-stochastic variance profile, it is difficult to analytically determine the shape of the predictive risk $r_0(X_n)$ as a function of the ratio $p/n$. Indeed, this requires to at least know upper and lower bounds on the magnitude of the diagonal elements of $T_p(0^-)$ and $\kappa(0^+)$ (and its derivative). As  shown by \cref{lem:limzero} when $p/n< 1$, this issue  amounts to finding upper and lower bounds of the support of the matrix-valued measure $\mu$  satisfying
$
T_p (-0) = \int_{ \tau}^{+\infty} \frac{\mu (dw)}{w}.
$
Hence, upper and lower bounding $T_p (-0)$ is   related to understanding the value of the constant $\tau$ and  the size of the support of the limiting spectral distribution of the covariance matrix $\Sigma_n$  which remains (to the best of our knowledge) an open problem for random matrices with  an arbitrary variance profile.

 Nevertheless, in \Cref{sec:num}, we use \cref{CorStat} and computational methods to evaluate $T_p(0^-)$ and $\kappa(0^+)$ to report numerical experiments  illustrating that the double descent phenomenon for the predictive risk of $\hat{\theta}$ also holds in the high-dimensional model \cref{eq:linmod} with more general variance profile than a quasi-bistochastic one. In \Cref{sec:num}, we also exhibit variance profiles for which the predictive risk has a shape that differs from double descent.

\section{Numerical experiments}  \label{sec:num}
In this section, we illustrate the results of this paper with  numerical experiments. Numerical experiments  have been conducted  using a random matrix $Z_n = (z_{ij})_{\substack{1 \le i \le n \\ 1 \le j \le p}}$ with iid entries sampled from a centered and normalized Pareto distribution. More precisely, let $(\xi_{ij})_{\substack{1 \le i \le n \\ 1 \le j \le p}}$ be iid random variables following a Pareto distribution with parameters $(a,m) = (6,1)$, whose density is given by
$
f(x) = \frac{a m^a}{x^{a+1}} = \frac{6}{x^7}, \ x \ge 1.
$
The entries of $Z_n$ are then defined as
$
z_{ij} = \frac{\sqrt{a}\,(\xi_{ij} - a)}{\sqrt{a - 2}},
$
so that $\mathbb{E}[z_{ij}] = 0$ and $\mathbb{V}ar(z_{ij}) = 1$.
The moments of the Pareto random variables $\xi_{ij}$ are given by
$
\mathbb{E}[\xi_{ij}^k] =
\begin{cases}
\dfrac{a m^k}{a - k} = \dfrac{6}{6 - k}, & \text{if } k < a, \\
+\infty, & \text{otherwise}.
\end{cases}
$
In particular, the first five moments of $\xi_{ij}$ are finite. Consequently, the entries of $Z_n$ satisfy ~\cref{hyp:Z}.  
The test data are generated according to the same procedure. We note that we also conducted the same experiments using Gaussian random variables and obtained quantitatively identical numerical results. This observation provides additional empirical evidence in support of the universality of our theoretical findings with respect to the distribution of the design entries.

In addition, we consider several variance profiles  normalized such that 
$
\frac{1}{n}\sum_{i = 1}^n\sum_{j=1}^p \frac{\gamma^2_{ij}}{p} = 1.
$
Apart from the constant, quasi doubly stochastic, and piecewise constant variance profiles mentioned earlier, we will use the following examples of variance profiles:
\begin{description}
\item[-] The alternated columns variance profile satisfying $ \gamma_{ij} = \gamma_1 $ if $ j $ is even and $ \gamma_{ij} = \gamma_2 $ if $ j $ is odd.
\item[-] The polynomial variance profile satisfying $ \gamma_{ij}^2 = \Big|\frac{i-j}{\mbox{min}(n,p)}\Big|^6 + \tau $ for some $\tau > 0$.
\item[-] The block variance profile
$$
\Gamma_n  = \left[\begin{array}{ccc} \gamma_1^2  \one_{n/4}\one_{p/4}^\top  & \gamma_2^2   \one_{n/4}\one_{p/3}^\top  & \one_{n/4}\one_{5p/12}^\top  \\ \gamma_2^2   \one_{n/3}\one_{p/4}^\top  & \gamma_1^2\one_{n/3}\one_{p/3}^\top & \gamma_3^2   \one_{n/3}\one_{5p/12}^\top   \\    \one_{5n/12}\one_{p/4}^\top  & \gamma_3^2\one_{5n/12}\one_{p/3}^\top & \gamma_1^2   \one_{5n/12}\one_{5p/12}^\top  \end{array}\right], 
$$
for some sufficiently different constants $\gamma_1,\gamma_2,\gamma_3$.
\item[-]  A variance profile referred to as the Berlin Photo, displayed in \cref{fig:berlin_photo}, for which $\gamma_{ij}$ is equal to the value of the coordinate $(i,j)$ of the pixel of the image shown in \cref{fig:berlin_photo}.
\end{description}

\begin{figure}[htbp]
\begin{center}
\includegraphics[height=8.5cm,width=10cm]{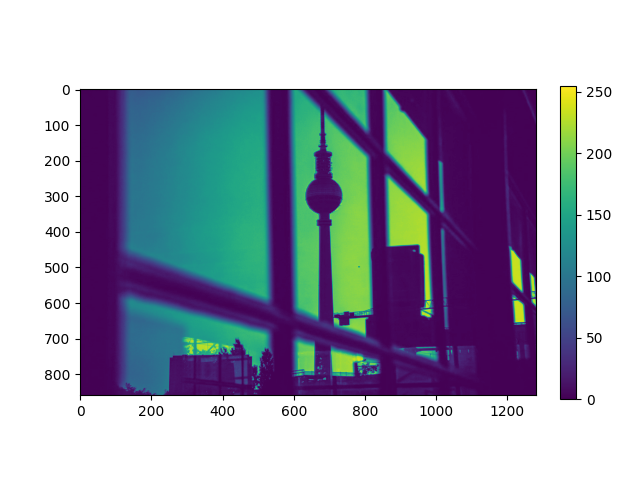}
\caption{The Berlin Photo variance profile whose entries correspond to the green channel of the pixels of a RGB picture taken in Berlin.}
\label{fig:berlin_photo}
\end{center}
\end{figure}

The values of the predictive and training risks and their deterministic equivalent are compared for various variance profiles in \cref{fig:profile_comparaisons,fig:profile_train_comparaisons} with $ \lambda $ ranging from 0.1 to 10, $n = 400$ and $p = 600$. The curves displayed in these figures  confirm that $r_\lambda^{test}(X_n)$ and $r_\lambda^{train}(X_n)$ are very accurate estimators of $\hat r_\lambda^{test}(X_n)$ and $\hat r_\lambda^{train}(X_n)$ in high-dimension since the dashed and solid lines coincide. These variance profiles also provide curves of predictive risks having similar shapes (up to a vertical translation). The curves representing the cases of constant and quasi doubly stochastic variance profile coincide, confirming the comment made in \Cref{sec:maincontrib} on the doubly stochastic profile and its similarities with the setting of iid data. Moreover, the minimum of $ r_\lambda^{test} (X_n) $ is indeed reached at the optimal value $ \lambda_\ast $ for any variance profile as shown by \cref{cor:lambda_opt}.

\begin{figure}[htbp]
\begin{center}
{\subfigure[]{\includegraphics[width = 0.49\textwidth]{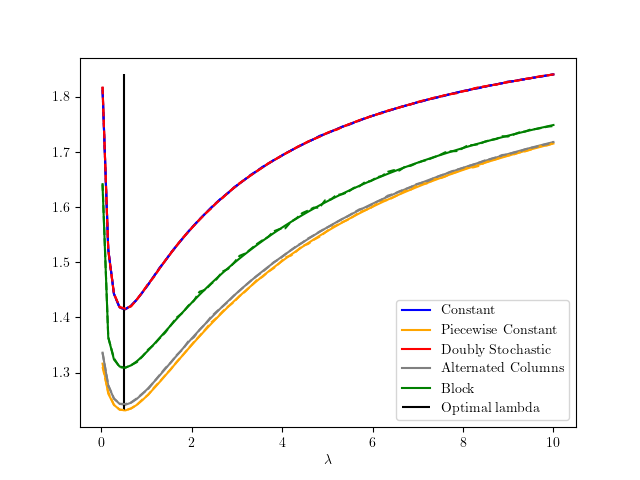}}
\label{fig:profile_comparaisons}}
\hfill
{\subfigure[]{\includegraphics[width =0.49\textwidth]{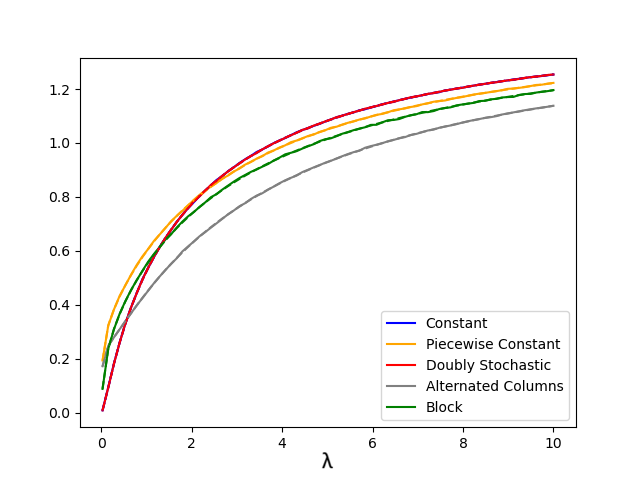}}
\label{fig:profile_train_comparaisons}}
\end{center}
\caption{Training and predictive risk for several variance profiles with $\lambda $ ranging from $0.1$ to $10$, $\alpha = 1$, $\sigma = 1$, $n = 400$ and $p = 600$.  (a) Comparison of $\hat r_\lambda^{test}(X_n)$  and $r_\lambda^{test}(X_n)$   for several variance profiles. (b) Comparison of $\hat r_\lambda^{train}(X_n)$  and $r_\lambda^{train}(X_n)$   for several variance profiles. The dashed lines correspond to the risks while the solid lines correspond to the deterministic equivalents.} \label{fig:comparaison}
\end{figure}

We illustrate the appearance of a double or triple descent phenomenon for several variance profiles in \cref{fig:descent_vp-min} (a), (c) and (e). These figures represent the predictive risk and its approximation by a deterministic equivalent for various values of the ratio $p/n$ with $\lambda = 0$. The solid lines depict $ r_0^{test}$ whereas the dashed lines represent $ \hat r_0^{test}$. For every variance profile, the solid and dashed lines coincide which confirms that $ r_0^{test}$ is a relevant deterministic equivalent of $ \hat r_0^{test}$. For the constant and quasi doubly stochastic variance profiles, we observe the well known double descent phenomenon as illustrated by \cref{fig:descent_vp-min} (a) since the curves are increasing for $p/n < 1$ and decreasing for $p/n > 1$. This is related to \cref{CorStat} which states that the expression of $\lim_{\lambda \rightarrow 0} r_\lambda^{test} (X_n)$ depends upon the value of the ratio $p/n$ with respect to one. Nevertheless, for other variance profiles, the shape of the predictive risk can be very different from one variance profile to another, and it differs from the usual double descent. For some variance profiles a phenomenon of triple descent arises, notably for the piecewise constant and block cases, as shown in \cref{fig:descent_vp-min} (c) and (e). We also remark the appearance of a quadruple descent in the case of the polynomial profile, as shown in \cref{fig:descent_vp-min} (e).

As already remarked at the end of \Cref{sec:main}, it is difficult to analytically explain this phenomenon for an arbitrary variance profile since we do not have an explicit formula for $T_p(z)$ in the general case. Nevertheless, as nicely discussed in \cite{schaeffer2023double}, the double descent phenomenon is very much related to the distance from zero of the smallest non-negative eigenvalue of  $\widehat{\Sigma}_n$. Having this eigenvalue close to zero causes double descent. In \cref{fig:descent_vp-min} (b), (d) and (f), we thus represent the smallest non-zero eigenvalue $\tau_{\min}(\widehat{\Sigma}_n)$ of $\widehat{\Sigma}_n$ for many variance profiles as a function of $p/n$. It can be observed that, for all variance profiles considered, the curves mapping $p/n \mapsto \tau_{\min}(\widehat{\Sigma}_n)$ attain their minimum at a value close to zero when $p = n$, corresponding to the interpolation threshold at which the classical double descent phenomenon occurs. This behavior is consistent with previous observations reported in \cite{schaeffer2023double}. A more detailed inspection of \Cref{fig:descent_vp-min}(b), (d) and (f) reveals a notable qualitative difference between variance profiles. In particular, for the piecewise constant and polynomial variance profiles, the smallest non-zero eigenvalue $\tau_{\min}(\widehat{\Sigma}_n)$ remains significantly closer to zero over a wide range of aspect ratios, approximately for $p/n \in [0.25,3]$, compared to the constant or quasi--doubly stochastic cases. This behavior manifests itself as an extended plateau near zero in the corresponding spectral curves. The presence of such plateaus suggests prolonged near-singularity of the empirical covariance matrix across a broad range of over- and under-parameterized regimes. This spectral degeneracy may play a key role in the emergence of more complex risk landscapes, and in particular may be closely related to the appearance of the triple and quadruple descent phenomena observed in \Cref{fig:descent_vp-min}(b). Similar trends are observed for the Alternated columns and Berlin photo variance profiles considered in our experiments. Although it is challenging to visually determine whether this eigenvalue reaches its minimum near the interpolation threshold $p = n$ in \Cref{fig:descent_vp-min}(b), we include zoomed-in versions of this figure (See Figure \ref{fig:Zoom}), which enable a more precise examination of its behavior in the neighborhood of $p/n = 1$.

\begin{figure}[htbp]
\begin{center}
{\subfigure[]{\includegraphics[width = 0.49\textwidth]{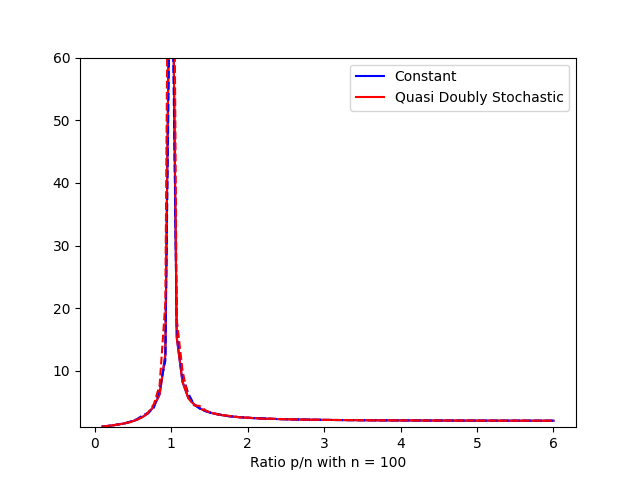}}
\label{fig:db_comparaison_const_db_subfigure}}
\hfill
{\subfigure[]{\includegraphics[width =0.49\textwidth]{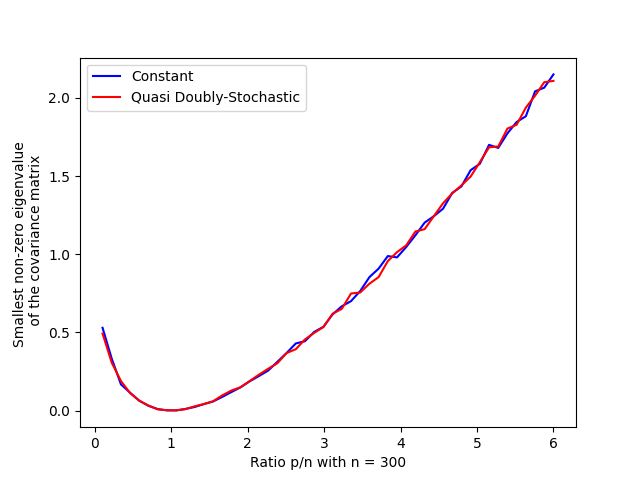}}
\label{vp_min_const_db}}
\\
{\subfigure[]{\includegraphics[width = 0.49\textwidth]{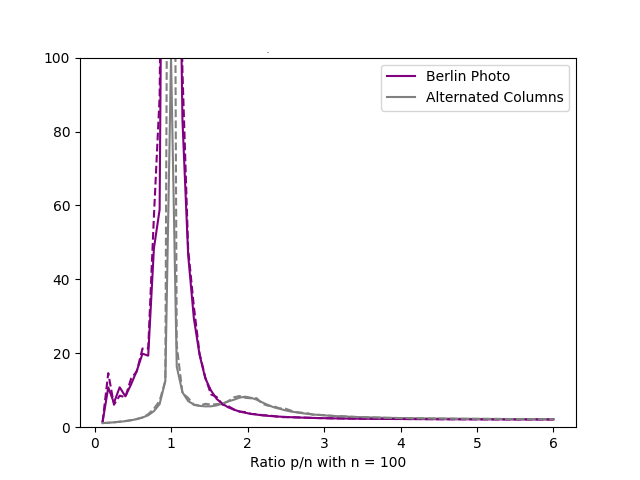}}
\label{fig:db_comparaison_col_berlin}}
\hfill
{\subfigure[]{\includegraphics[width =0.49\textwidth]{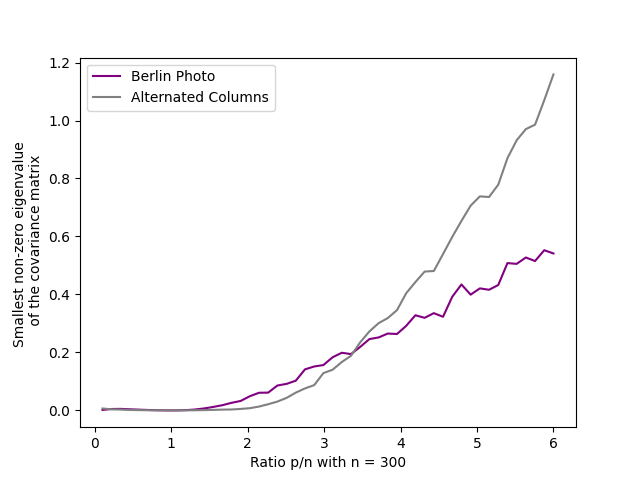}}
\label{fig:vp_min_col_berlin}}
\\
{\subfigure[]{\includegraphics[width = 0.49\textwidth]{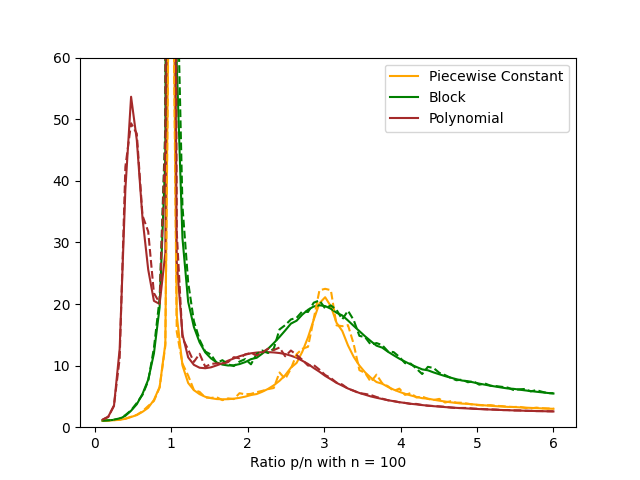}}
\label{fig:db_comparaison_pw_pw3_poly}}
\hfill
{\subfigure[]{\includegraphics[width =0.49\textwidth]{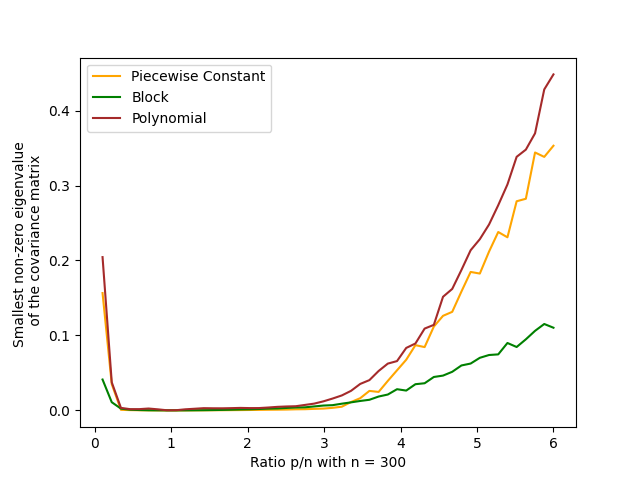}}
\label{fig:vp_min_pw_pw3_poly}}
\end{center}
\caption{Left: Double descent phenomenon for several variance profiles with $\alpha = \sigma = 1$, $n = 100$ and $p$ varying from $10$ to $600$. Right: Smallest non-zero eigenvalue of $\widehat{\Sigma}_n$ for several variance profiles with $\alpha = \sigma = 1$, $n = 300$ and $p$ varying from $30$ to $1800$.} \label{fig:descent_vp-min}
\end{figure}

\begin{figure}[htbp]
\begin{center}
{\subfigure[]{\includegraphics[width = 0.49\textwidth]{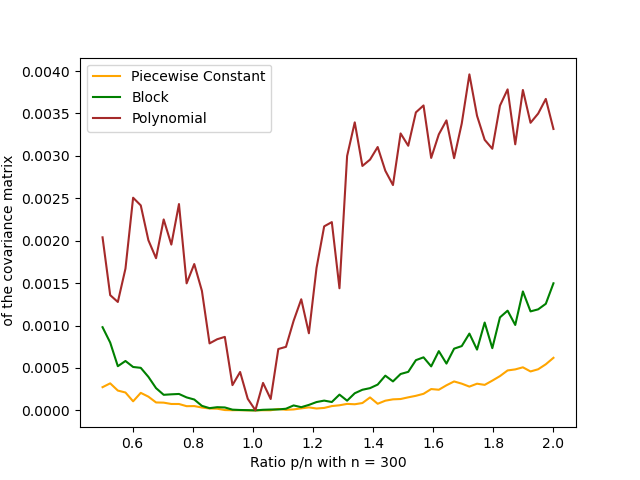}}
\label{fig:zoom1}}
\hfill
{\subfigure[]{\includegraphics[width =0.49\textwidth]{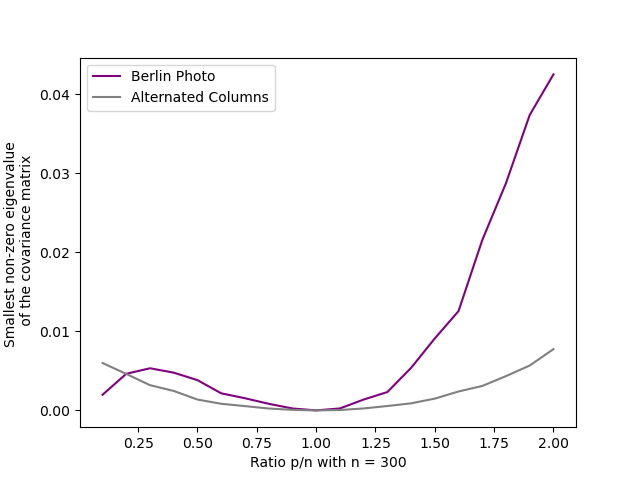}}
\label{fig:zoom2}}
\end{center}
\caption{Smallest non-zero eigenvalue of $\widehat{\Sigma}_n$ for several variance profiles with $\alpha = \sigma = 1$, $n = 300$ and $p$ varying from $30$ to $1800$. Left: Zoomed version of \Cref{fig:descent_vp-min} (f). Right: Zoomed version of \Cref{fig:descent_vp-min} (d).} \label{fig:Zoom}
\end{figure}

\begin{figure}[htbp]
\begin{center}
{\subfigure[]{\includegraphics[width = 0.49\textwidth]{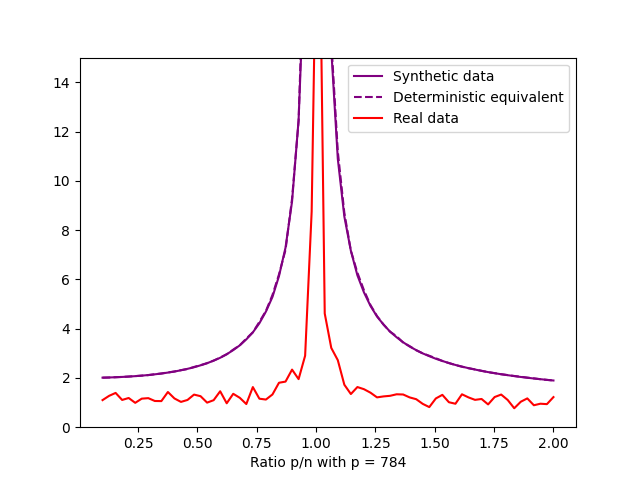}}
\label{fig:db_MNIST}}
\hfill
{\subfigure[]{\includegraphics[width =0.49\textwidth]{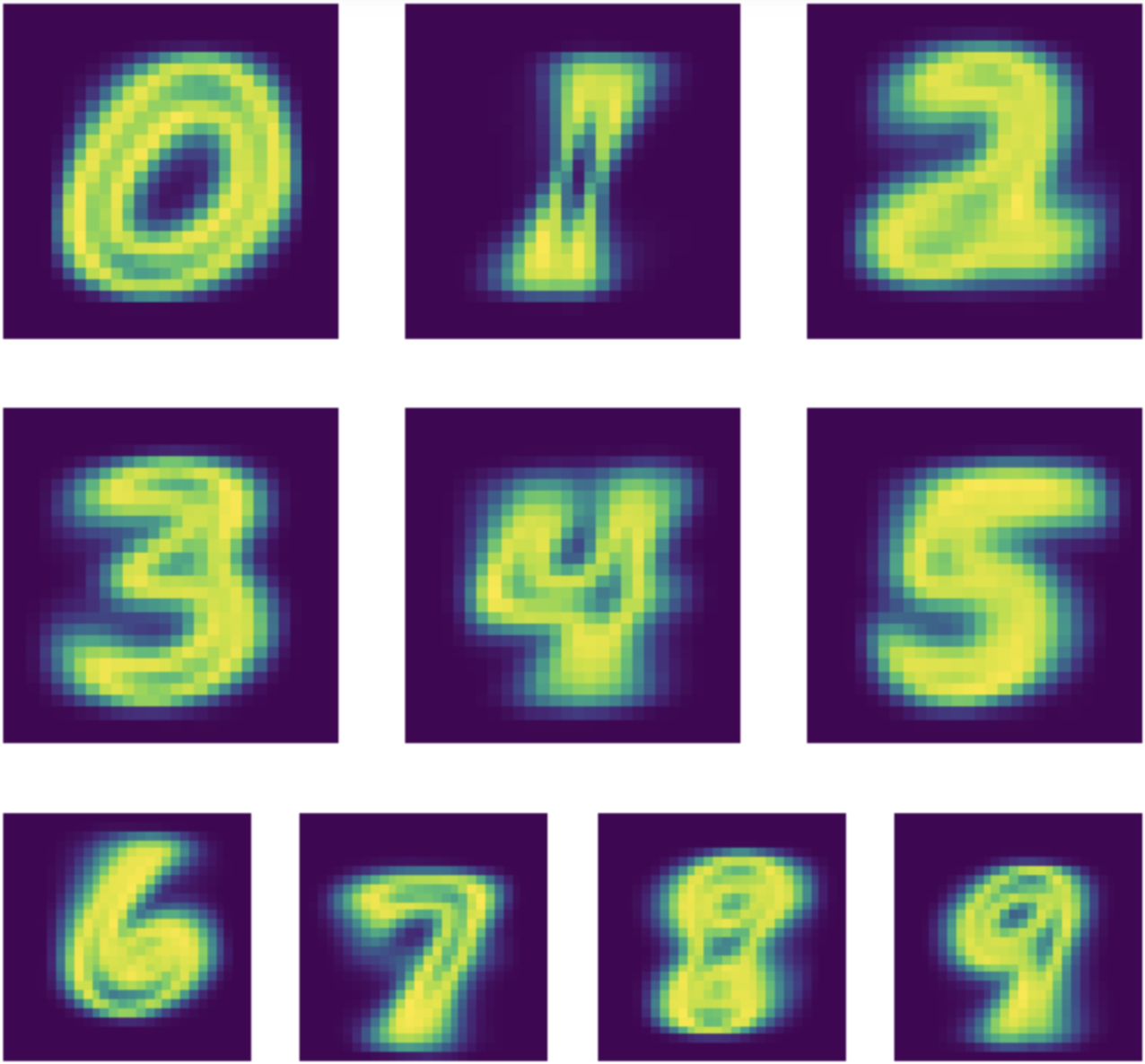}}
\label{fig:Fig_MNIST}}
\end{center}
\caption{Study of MNIST dataset whenever $\lambda = 0$.  (a) Comparison of real and synthetic data with $\alpha = \sigma = 1$, $p = 784$ and $n$ varying from $78$ to $1568$.  The solid lines correspond to the predictive risk while the dashed line corresponds to the deterministic equivalent.(b) Heatmap of the variance of pixels for each class. These heatmaps serve as variance profiles in the mixture model created from the MNIST dataset and described in the begining \Cref{sec:num}.} \label{fig:profile_MNIST}
\end{figure}

 We also conducted numerical experiments that highlight the connection between variance-profiled data and mixture models. These experiments are based on the MNIST dataset, a well-known image classification dataset comprising handwritten digits from 0 to 9. This dataset can indeed be viewed as a 10-class mixture model, as described in the introduction, where the $ i $-th class corresponds to the digit $ i $, and the probability $ \pi_i $ represents the proportion of digit $ i $ in the dataset. Each class exhibits a distinct variance profile, as illustrated in \cref{fig:profile_MNIST} (b). In this figure, we present heatmaps for each class that represent the variance of each pixel within the respective digit images. These heatmaps vary significantly from one class to another, which motivates the use of the variance profile approach. By vectorizing these heatmaps, we extract the variance profiles $ S_i $. Using these profiles, we then examine the regression model of interest through two distinct approaches. In the first approach, we use real data, where the images from the MNIST dataset serve as feature vectors. Since the images are matrices of size $ 28 \times 28 $, we vectorize them into vectors of size 784. In the second approach, we rely on synthetic data by generating random vectors that follow a variance profile $ S_i $ among those described by the heatmaps in \cref{fig:profile_MNIST} (b). For both scenarios, we consider the case where $ \lambda = 0 $ and compare their predictive risks with the deterministic equivalent derived in our theoretical framework. The results of this comparison are summarized in \cref{fig:profile_MNIST} (a). Notably, the curve corresponding to the deterministic equivalent closely matches the one for synthetic data, demonstrating the accuracy of our theoretical predictions. However, these two curves deviate significantly from the one associated with real data. This discrepancy arises because the entries of the feature vectors from real data correspond to the pixels of an image, which are inherently correlated. This violates the assumption of independence among vector entries, a fundamental premise of our study. This limitation underscores one of the primary constraints of our work: we assume independence among the entries of feature vectors, which may be restrictive. Nevertheless, we remain optimistic that our findings can be generalized to scenarios involving correlated data. This optimism is supported by research such as \cite{wagner2012large,benaych2016spectral}, which propose deterministic equivalents for the resolvent matrix in the context of variance-profiled matrices  for correlated Gaussian data. Extending our work in this direction could significantly enhance its applicability to real-world datasets.

\section{Conclusion}  \label{sec:conc}

In this paper, we derive a deterministic equivalent of the DOF and the predictive risk  of ridge (less) regression in a high-dimensional framework with a variance profile to handle the setting of non-iid data. We use RMT to determine a deterministic equivalent of the diagonal  of the resolvent matrix $Q_p(z)$.  Our work extends the study of the DOF and the predictive risk from \cite{17-AOS1549} and \cite{bach2023highdimensional} to the case of general variance profile. It appears that also the result of \cite{17-AOS1549} and \cite{bach2023highdimensional} still hold in the case of a quasi doubly stochastic profile. However, there are variance profiles that cause a behavior of the predictive risk of the minimum norm least square estimator that differs from the double descent phenomenon as classically observed in the case of a constant variance profile. Our numerical experiments confirm that our deterministic equivalent accurately estimates the predictive risk in high-dimension. Our results also allow to understand how assuming such a variance profile for the data influences the statistical properties of ridge regression when compared to the standard assumption of iid observations. 
Beyond the random-matrix analysis itself, the deterministic equivalents provided here can be viewed as a practical ``calculus'' for high-dimensional prediction under heterogeneous designs: they yield computable approximations for prediction error and degrees of freedom, enable principled tuning rules (oracle and GCV-type), and explain when interpolation phenomena resemble or depart from the iid benchmark depending on the variance profile.
These consequences are precisely the type of statistical insights that deterministic equivalents have enabled in the iid ridge literature, and our work extends this program to non-identically distributed designs.

We hope that our approach on the use of  variance profiles may lead to further research works on the statistical analysis of other estimators than ridge regression in more complex models with non-iid data.

\appendix
\section*{Appendix}  \label{sec:app}

\renewcommand{\thesection}{\Alph{section}}
\renewcommand{\theequation}{\thesection.\arabic{equation}}
\setcounter{section}{1}
\setcounter{equation}{0}

In this appendix, we give all the proofs of the results of the paper. We also discuss the link between random matrices with a variance profile and the notion of $\mathcal{R}$-transform in free probability.

\subsection{Proofs of \cref{prop:DOF} and \cref{lem:decomp}}\label{sec:additionalproofs}

\begin{proof}[Proof of \cref{prop:DOF}]
Recall that $ \widehat{df}_{1}(\lambda) = \frac{1}{p} \Tr [\widehat{\Sigma}_n (\widehat{\Sigma}_n + \lambda I_p)^{-1}]$, for $\lambda > 0$. Thus by using $\hat{\mu}_n$ the empirical eigenvalue distribution of $\widehat{\Sigma}_n$ we have this new expression of the degree of freedom $
 \widehat{df}_{1}(\lambda) = \int_{\RR} \frac{t}{t + \lambda} d \hat{\mu}_n(t).$
Moreover, $\hat{\mu}_n$ being a probability measure one can notice from \cref{def:Stieltjes} that 
$
 \widehat{df}_{1}(\lambda) = \int_{\RR} 1 - \frac{\lambda}{t + \lambda} d \hat{\mu}_n(t) = 1 - \lambda g_{\hat{\mu}_n}(-\lambda).
$
Thanks to \cref{prop:Stieltjes}, one has that $ \widehat{df}_{1}(\lambda) \sim 1 - \lambda g_{\nu_n}(-\lambda)$.
Let us now remark that, by \cref{prop:Stieltjes}, one has that
$
 g_{\nu_n}(z) = \frac{1}{p} \Tr [T_p(z)] =  - \frac{1}{p} \sum_{j=1}^{p} \frac{1}{z\left(1 + (1/n) \Tr[\widetilde{D}_j \widetilde{T}_n(z)]\right)},
$
implying that
\begin{eqnarray*}
1 - \lambda g_{\nu_n}(- \lambda)   =  1 - \frac{1}{p} \sum_{j=1}^{p} \frac{1}{\left(1 + (1/n) \Tr[\widetilde{D}_j \widetilde{T}_n(- \lambda)]\right)}=    \frac{1}{p} \sum_{j=1}^{p} \frac{ (1/n) \Tr[\widetilde{D}_j \widetilde{T}_n(- \lambda)]}{\left(1 + (1/n) \Tr[\widetilde{D}_j \widetilde{T}_n(- \lambda)]\right)} .
\end{eqnarray*}
Then, recall that
$
\Sigma_n =   \frac{1}{n}  \diag( \Tr[\widetilde{D}_1],\ldots,  \Tr[\widetilde{D}_p]),
$
which leads to
\begin{equation}
1 - \lambda g_{\nu_n}(- \lambda) =  \frac{1}{p} \sum_{j=1}^{p} \frac{ (1/n) \Tr[\widetilde{D}_j]  }{ (1/n) \Tr[\widetilde{D}_j] + \kappa_{j}(\lambda)} =   \frac{1}{p}  \Tr[ \Sigma_n ( \Sigma_n + \kappa(\lambda))^{-1} ],  \label{eq:equiv2}
\end{equation}
where
$\kappa_{j}(\lambda) =  \frac{ \Tr[\widetilde{D}_j]}{  \Tr[\widetilde{D}_j \widetilde{T}_n(- \lambda)]}$, for $1 \leq j \leq p,
$
and 
$
\kappa(\lambda) = \diag(\kappa_{1}(\lambda),\ldots, \kappa_{p}(\lambda) ).
$

Consequently, we obtain the deterministic equivalent for the DOF stated in \cref{eq:DOFprop}, which completes the proof of \cref{prop:DOF}.
\end{proof}

\begin{proof}[Proof of \cref{lem:decomp}]
Let's compute a new expression for $\hat{r}_\lambda^{train} (X_n)$
\begin{eqnarray*}
\hat{r}_\lambda^{train} (X_n) &=& \frac{1}{n}\mathbb{E} [\parallel Y_n - X_n \hat{\theta}_{\lambda} \parallel^2|X_n]= \frac{1}{n}\mathbb{E} [\parallel (I_n - \frac{1}{n}X_n X_n^\top \widetilde Q_n(-\lambda)) Y_n \parallel^2|X_n]\\
                   &=& \frac{\lambda^2}{n}\mathbb{E} [\parallel \widetilde Q_n(-\lambda) Y_n \parallel^2|X_n]\\
                   &=& \frac{\lambda^2}{n}\mathbb{E} [\Tr[\widetilde Q_n(-\lambda)X\beta_\ast\beta_\ast^\top X_n^\top \widetilde Q_n(-\lambda) +2\widetilde Q_n(-\lambda)X_n\beta_\ast\varepsilon^\top \widetilde Q_n(-\lambda) \\
                   & & + \widetilde Q_n(-\lambda)\varepsilon\varepsilon^\top \widetilde Q_n(-\lambda)]|X_n]\\ &=&\frac{\lambda^2\alpha^2}{p}\mathrm{Tr}\Bigg[\widetilde Q_n^2(-\lambda)\frac{X_nX_n^\top}{n}\Bigg] + \frac{\lambda^2\sigma^2}{n}\mathrm{Tr}[\widetilde Q_n^2(-\lambda)] \\ %&=&\frac{\lambda^2\alpha^2}{n}\mathrm{Tr}\Bigg[Q_p(-\lambda) - \lambda Q^2_p(-\lambda)\Bigg] + \frac{\lambda^2\sigma^2}{n}\mathrm{Tr}[Q^2_p(-\lambda)]\\
                  & = &\frac{\lambda^2\alpha^2}{p}\mathrm{Tr}\Bigg[\widetilde Q_n(-\lambda) - \widetilde Q_n'(-\lambda)\Bigg] + \frac{\lambda^2\sigma^2}{n}\mathrm{Tr}[\widetilde Q_n'(-\lambda)].
\end{eqnarray*}

Let us consider the following decomposition of the predictive risk. Since $\tilde{x}$ and $\widetilde{\varepsilon}$ are independent from $X_n$, we can proceed similarly to get a new expression for $\hat{r}_\lambda^{test} (X_n)$:
\begin{eqnarray*}
\hat{r}_\lambda^{test} (X_n)   &=& \mathbb{E}[( \tilde{x}^\top \hat{\theta}_\lambda - \tilde{x}^\top \beta_\ast - \tilde{\varepsilon} )^2| X_n]\\
			&=& \mathbb{E}[\mathrm{Tr}[\hat{\theta}_\lambda\hat{\theta}_\lambda^\top  \tilde{x} \tilde{x}^\top  + \beta_\ast\beta_\ast^\top  \tilde{x} \tilde{x}^\top  + \tilde{\varepsilon}^2 - 2\beta_\ast\hat{\theta}_\lambda^\top  \tilde{x} \tilde{x}^\top  - 2\tilde{\varepsilon}\hat{\theta}_\lambda^\top  \tilde{x}\\
			& &+ 2\tilde{\varepsilon}\beta_\ast^\top  \tilde{x}
			]|X_n]\\
			&=& \sigma^2 +  \mathrm{Tr}[\mathbb{E}[(\hat{\theta}_\lambda\hat{\theta}_\lambda^\top  + \beta_\ast\beta_\ast^\top  - 2\beta_\ast\hat{\theta}_\lambda^\top )|X_n]\widetilde{S}_p ]\\
			&=& \sigma^2 +  \mathbb{E}[\mathrm{Tr}[(\hat{\theta}_{\lambda} - \beta_\ast) (\hat{\theta}_{\lambda} - \beta_\ast)^\top \widetilde{S}_p ]|X_n]\\
			&=& \sigma^2 +  \mathbb{E}[(\hat{\theta}_{\lambda} - \beta_\ast)^\top \widetilde{S}_p (\hat{\theta}_{\lambda} - \beta_\ast) |X_n] = \sigma^2 + \hat R(\hat{\theta}_{\lambda},\beta_\ast),
\end{eqnarray*}
with $\hat R(\hat{\theta}_{\lambda},\beta_\ast) = \mathbb{E}[(\hat{\theta}_{\lambda} - \beta_\ast)^\top \widetilde{S}_p (\hat{\theta}_{\lambda} - \beta_\ast) |X_n]$.
Then by using \cref{eq:linmod,eq:theta}, we get $\hat{\theta}_{\lambda} - \beta_\ast =  -\lambda (\widehat{\Sigma}_n +  \lambda I_p)^{-1} \beta_\ast  + (\widehat{\Sigma}_n +  \lambda I_p)^{-1} \frac{X_n^\top  \varepsilon_n}{n}.$ Therefore, we can rewrite
\begin{eqnarray*}
\hat{R}(\hat{\theta}_{\lambda},\beta_\ast) &=& \mathbb{E}[\lambda^2 \beta_\ast^\top  Q_p(-\lambda) \widetilde{S}_p Q_p(-\lambda) \beta_\ast + \frac{\varepsilon_n^\top X_n}{n}Q_p(-\lambda) \widetilde{S}_p Q_p(-\lambda)\frac{X_n^\top  \varepsilon_n}{n} \\
                & & -2 \lambda \beta_\ast^\top  Q_p(-\lambda) \widetilde{S}_p Q_p(-\lambda)\frac{X_n^\top  \varepsilon_n}{n} |X_n ]\\
                &=& \lambda^2 \mathrm{Tr}[Q_p(-\lambda) \widetilde{S}_p Q_p(-\lambda) \mathbb E [\beta_\ast\beta_\ast^\top ]] + \frac{1}{n^2}\mathrm{Tr}[\widetilde{S}_p Q_p(-\lambda) X_n^\top \mathbb{E}[\varepsilon_n\varepsilon_n^\top ]X_nQ_p(-\lambda)]\\
            & = & \frac{\lambda^2\alpha^2}{p} \mathrm{Tr}[Q_p(-\lambda) \widetilde{S}_p Q_p(-\lambda)] + \frac{\sigma^2}{n} \Tr[\widetilde{S}_p   Q_p(-\lambda)\widehat{\Sigma}_n   Q_p(-\lambda) ] \\
                & = & \frac{\lambda^2\alpha^2}{p} \mathrm{Tr}[Q_p(-\lambda) \widetilde{S}_p Q_p(-\lambda)] + \frac{\sigma^2}{n} \Tr[\widetilde{S}_p   \widehat{\Sigma}_n   Q_p'(-\lambda) ].
\end{eqnarray*}
Note that $\widehat{\Sigma}_n$ and $Q_p(-\lambda)$ are commuting, one has that $Q_p^2(-\lambda) = Q_p'(-\lambda)$, and moreover, $\widehat{\Sigma}_n Q_p'(-\lambda) = Q_p(-\lambda) - \lambda Q_p'(\lambda)$. Then we get the following Bias-Variance decomposition of the predictive risk
$
\hat{R}(\hat{\theta}_{\lambda},\beta_\ast)=\mathrm{Bias}(\hat{\theta}_{\lambda}) + \mathrm{Var}(\hat{\theta}_{\lambda}),
$
where
$\mathop{\rm Bias}\nolimits(\hat{\theta}_{\lambda}) = \frac{\lambda^2\alpha^2}{p} \mathrm{Tr}[Q_p(-\lambda) \widetilde{S}_p Q_p(-\lambda)]  =  \frac{\lambda^2\alpha^2}{p} \mathrm{Tr} [\widetilde{S}_p Q_p'(-\lambda)]$and
$ \mathop{\rm Var}(\hat{\theta}_{\lambda}) =  \frac{\sigma^2}{n} \Tr[\widetilde{S}_p   \widehat{\Sigma}_n   Q_p'(-\lambda) ] =  \frac{\sigma^2}{n} \mathrm{Tr}[\widetilde{S}_p(Q_p(-\lambda) -\lambda Q_p'(-\lambda)) ].
$
From these computations, we finally deduce the following formula of the predictive risk
\begin{eqnarray*}
    \hat{r}_\lambda^{test} (X_n) = \sigma^2 + \frac{\sigma^2}{n}\mathrm{Tr}[\widetilde{S}_p Q_p(-\lambda)] + \lambda\left( \frac{\lambda\alpha^2}{p} -  \frac{\sigma^2}{n} \right) \mathrm{Tr}[\widetilde{S}_p  Q_p'(-\lambda)],
\end{eqnarray*}
Then, \cref{eq:risk} directly follows from these computations since $\widetilde{S}_p$ is a diagonal matrix, which completes the proof of \cref{lem:decomp}.
\end{proof}

\subsection{Derivation of the deterministic equivalent the diagonal of the resolvent}\label{sec:proofs}
This section is dedicated to prove \cref{theoremRMT} that aims to provide a deterministic equivalent of $\Delta[Q_p(z)]$ that is necessary to obtain asymptotic equivalents of the risks.
The proof of this theorem is strongly inspired by results from \cite{HLJ07}, and it constitutes a key element in the proof of \cref{theoremStat}. In order to prove \cref{theoremRMT}, we introduce the diagonal matrices $R_p(z) = \underset{1\leq j \leq p}{\diag} (R^{(j)}_p(z))$ defined by
$
R^{(j)}_p(z) = \frac{-1}{z\left((1 + (1/n) \Tr[\widetilde{D}_j \widetilde{Q}_n(z)]\right)} \quad \mbox{for } z \in \mathbb C \setminus \mathbb R^+,
$
with $\widetilde{Q}_n(z) = \Big(\widetilde{\Sigma}_n - zI_n\Big)^{-1}$ is the resolvent matrix of $\widetilde{\Sigma}_n = \frac{1}{n} X_n X_n^\top $.
\begin{lemma}\label{lem:Q-R}
Let us define $\Delta[\mathbf{Q}(z)]$ as in \cref{theoremRMT} and denote by $\mathbf{R}(z)$ the family of matrices  $(R_p(z))_{p\geq 1}$. Then one has that 
$
\Delta[\mathbf{Q}(z)] \sim \mathbf{R}(z),
$
for all $z\in \mathbb C ^+ = \lbrace z\in \mathbb C, \Im(z) > 0 \rbrace$.
\end{lemma}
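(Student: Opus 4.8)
The plan is to reduce the statement to the classical resolvent identities for sample covariance matrices with a variance profile, taking care that every estimate is uniform in the column index $j$ so that it survives being weighted by an arbitrary bounded diagonal matrix. Denote by $y_j\in\RR^n$ the $j$-th column of $X_n$, so that $y_j=\widetilde D_j^{1/2}z_j$ with $z_j$ the $j$-th column of $Z_n$, let $X_n^{(j)}$ be $X_n$ with its $j$-th column deleted, and set $\widetilde Q_n^{(j)}(z)=\bigl(\tfrac1nX_n^{(j)}(X_n^{(j)})^\top-zI_n\bigr)^{-1}$. A Schur-complement expansion of $(\widehat\Sigma_n-zI_p)^{-1}$ along the index $j$, together with the resolvent-swap identity $A(A^\top A-zI)^{-1}A^\top=I_n+z(AA^\top-zI)^{-1}$ applied to $A=n^{-1/2}X_n^{(j)}$, yields the exact formula
\[
[Q_p(z)]_{jj}=\frac{-1}{z\bigl(1+\tfrac1n\,y_j^\top\widetilde Q_n^{(j)}(z)\,y_j\bigr)},\qquad 1\le j\le p.
\]
Comparing with the definition of $R_p^{(j)}(z)$, the lemma follows once one controls, on a single almost-sure event and uniformly in $j$, the scalar error
\[
\varepsilon_j(z):=\tfrac1n\,y_j^\top\widetilde Q_n^{(j)}(z)\,y_j-\tfrac1n\,\Tr\!\bigl[\widetilde D_j\widetilde Q_n(z)\bigr].
\]

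I would split $\varepsilon_j$ into a quadratic-form fluctuation and a rank-one correction. For the first, write $\tfrac1ny_j^\top\widetilde Q_n^{(j)}(z)y_j-\tfrac1n\Tr[\widetilde D_j\widetilde Q_n^{(j)}(z)]=\tfrac1nz_j^\top B_jz_j-\tfrac1n\Tr B_j$ with $B_j=\widetilde D_j^{1/2}\widetilde Q_n^{(j)}(z)\widetilde D_j^{1/2}$; since $z_j$ has i.i.d.\ centred entries with a finite fourth moment (Assumption \ref{hyp:Z}) and is independent of $B_j$, while $\|B_j\|\le\gamma_{\max}^2/\Im z$ by Assumption \ref{hyp:var1} and the trivial bound $\|\widetilde Q_n^{(j)}(z)\|\le1/\Im z$, the trace lemma gives $\EE\bigl|\tfrac1nz_j^\top B_jz_j-\tfrac1n\Tr B_j\bigr|^2\le C(z)/n$ uniformly in $j$. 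For the second, $\widetilde\Sigma_n$ and $\tfrac1nX_n^{(j)}(X_n^{(j)})^\top$ differ by the rank-one matrix $\tfrac1ny_jy_j^\top$, so the resolvent identity gives $\bigl|\Tr[\widetilde D_j(\widetilde Q_n^{(j)}(z)-\widetilde Q_n(z))]\bigr|\le\gamma_{\max}^2/\Im z$, hence $\tfrac1n$ of it is $O(1/n)$ surely and uniformly in $j$. Altogether $\sup_j\EE|\varepsilon_j(z)|\to0$.

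To pass from $\varepsilon_j$ to $[Q_p(z)]_{jj}-R_p^{(j)}(z)$ I would use elementary a priori lower bounds on the two denominators, valid almost surely for $z\in\CC^+$. On the one hand $\bigl|z(1+\tfrac1ny_j^\top\widetilde Q_n^{(j)}(z)y_j)\bigr|=1/\bigl|[Q_p(z)]_{jj}\bigr|\ge\Im z$ from $\|Q_p(z)\|\le1/\Im z$. On the other hand, $w_j(z):=\tfrac1n\Tr[\widetilde D_j\widetilde Q_n(z)]$ is the Stieltjes transform at $z$ of a positive measure on $\RR^+$ of total mass $\tfrac1n\Tr[\widetilde D_j]\le\gamma_{\max}^2$ (Assumption \ref{hyp:var1}), and a Cauchy--Schwarz argument then gives $|1+w_j(z)|\ge\min\{1/2,\ \Im z/(4\gamma_{\max}^2)\}=:c(z)>0$. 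Consequently
\[
\bigl|[Q_p(z)]_{jj}-R_p^{(j)}(z)\bigr|=\frac{|\varepsilon_j(z)|}{\bigl|z(1+\tfrac1ny_j^\top\widetilde Q_n^{(j)}(z)y_j)\bigr|\;\bigl|1+w_j(z)\bigr|}\le\frac{|\varepsilon_j(z)|}{\Im z\,c(z)},
\]
so that, for any deterministic $U_p$ with $\sup_j|U_p^{(j)}|\le K$,
\[
\Bigl|\tfrac1p\Tr\bigl[(Q_p(z)-R_p(z))U_p\bigr]\Bigr|\le\frac{K}{\Im z\,c(z)}\cdot\frac1p\sum_{j=1}^p|\varepsilon_j(z)|,
\]
and the previous paragraph shows the right-hand side tends to $0$ in $L^1$, hence in probability.

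Finally I would upgrade this to almost-sure convergence. After a standard truncation of the entries of $Z_n$ at a level slightly below $\sqrt n$ — justified by the $(4+\delta)$-th moment assumption, which makes the truncated model have the same asymptotics and renders single-row perturbations of $X_n$ bounded — the functional $\tfrac1p\Tr[(Q_p(z)-R_p(z))U_p]$ has bounded martingale differences of order $K/\bigl(n(\Im z)^2\bigr)$ when the rows of $Z_n$ are revealed one by one, since each row change perturbs $\widehat\Sigma_n$ and $\widetilde\Sigma_n$ by a matrix of rank at most two. The Azuma--Hoeffding inequality then yields $P\bigl(|\tfrac1p\Tr[(Q_p(z)-R_p(z))U_p]-\EE[\,\cdot\,]|>\eta\bigr)\le 2\exp(-c\,\eta^2 n)$, which is summable, so Borel--Cantelli together with the vanishing of the expectation gives the almost-sure convergence required by Definition \ref{def:equiv}. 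The step I expect to be the main obstacle is precisely this combination of \emph{uniformity in $j$} with the \emph{almost-sure} upgrade: each of the Schur expansion, the trace lemma and the rank-one bound is classical and already present in \cite{HLJ07}, but there they are only needed for the single average defining $g_{\hat\mu_n}$, whereas here all $p$ diagonal entries must be controlled on the same probabilistic event and their errors must still add up favourably after weighting by an essentially arbitrary bounded diagonal matrix $U_p$, which is the genuinely new point compared with Proposition \ref{prop:Stieltjes}.
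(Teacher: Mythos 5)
Your argument is correct in substance, but it follows a genuinely different and much longer route than the paper. The paper's proof is a two-step reduction: it invokes \cite{HLJ07}[Lemma 6.1], which already provides the moment bound $\EE\bigl|\tfrac1p\Tr[(Q_p(z)-R_p(z))U_p]\bigr|^{2+\delta/2}\le K_1 p^{-1-\delta/4}$ uniformly over the admissible weights $U_p$, and then sums over $p$ and concludes almost-sure convergence by a Borel--Cantelli-type argument. You instead rebuild that input from scratch: the Schur-complement formula for $[Q_p(z)]_{jj}$, the trace lemma for the quadratic form, the rank-one resolvent comparison between $\widetilde Q_n^{(j)}$ and $\widetilde Q_n$, the Cauchy--Schwarz lower bound $|1+w_j(z)|\ge\min\{1/2,\Im z/(4\gamma_{\max}^2)\}$, and the a priori bound $|z(1+q_j)|\ge\Im z$ — all of these checks are sound — and you then obtain the almost-sure statement by bounded-differences concentration (Azuma--Hoeffding plus Borel--Cantelli) around a mean that you show vanishes at rate $O(n^{-1/2})$, rather than through the $(2+\delta/2)$-moment bound. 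What your route buys is self-containedness and an explicit exponential concentration statement; what the paper's route buys is brevity, since the uniformity in $j$ after weighting by a bounded diagonal $U_p$ — the point you single out as the ``genuinely new'' difficulty — is in fact already packaged in the cited lemma of \cite{HLJ07}. Two minor remarks: the truncation of the entries of $Z_n$ is superfluous, because the bounded-differences property you need holds deterministically from the rank-two resolvent perturbation inequalities (both for $\Delta[Q_p]$ and, via the perturbation of $w_j$, for $R_p$), so you can drop it; if you keep it, the assertion that the truncated model ``has the same asymptotics'' needs the usual truncation--centering comparison argument, which is standard but not free under only a $(4+\delta)$-th moment. Also, since $\Delta[Q_p]-R_p$ is diagonal, only the diagonal of $U_p$ enters the trace, consistent with Definition \ref{def:equiv}.
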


\begin{proof}[Proof of \cref{lem:Q-R}]
This proof is based on [2][Lemma 6.1] which asserts that there exists a constant $K_1 > 0$ such that $\mathbb{E}\left[ \left| \frac{1}{p}\Tr[(Q_p(z)-R_p(z))U_p]\right|^{2+\delta/2}\right]\leq \frac{K_1}{p^{1+\delta/4}}$, where $(U_p)_{p \geq 1}$ denotes a family of deterministic matrices satisfying \cref{eq:diag_family}. We deduce from this inequality that 
\begin{eqnarray*}
    \mathbb{E}\left[ \sum_{p\geq 1} \left| \frac{1}{p}\Tr[(Q_p(z)-R_p(z))U_p] \right|^{2+\delta/2}\right] \leq \sum_{p\geq 1}\frac{K_1}{p^{1+\delta/4}} < +\infty.
\end{eqnarray*}
The series being finite, one has that 
$
\mathbb{P}\left[ \sum\limits_{p\geq 1} \left| \frac{1}{p}\Tr[(Q_p(z)-R_p(z))U_p] \right|^{2+\varepsilon/2} < +\infty \right] = 1
$
, which finally gives us: 
\begin{eqnarray*}
\mathbb{P}\left[  \frac{1}{p}\Tr[(Q_p(z)-R_p(z))U_p]   \xrightarrow[n\rightarrow +\infty]{} 0 \right]\geq \mathbb{P}\left[ \sum_{p\geq 1} \left| \frac{1}{p}\Tr[(Q_p(z)-R_p(z))U_{p}] \right|^{2+\varepsilon/2} < +\infty \right]= 1.
\end{eqnarray*}
This concludes the proof of \cref{lem:Q-R}.
\end{proof}

Then, \cref{lem:Q-R} needs to be combined with the following one in order to prove Theorem \cref{theoremRMT}.
\begin{lemma}\label{lem:T-R}
Let us define $\mathbf{T}(z)$ and $\mathbf{R}(z)$ as in \cref{theoremRMT} and \cref{lem:Q-R}. Then one has that $
\mathbf{R}(z) \sim \mathbf{T}(z),
$
for all $z\in \mathcal{D} = \lbrace z\in \mathbb C^+, \frac{|z|}{|\Im(z)|} < 2, |\Im(z)| > 4d\gamma_{max}^2 \rbrace$.
\end{lemma}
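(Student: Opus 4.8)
The plan is to establish Lemma~\ref{lem:T-R} by a contraction (fixed-point stability) argument on $\mathcal{D}$, combining two exact scalar identities with Lemma~\ref{lem:Q-R} and its companion version. The companion version is obtained by exchanging the roles of $X_n$ and $X_n^\top$: it asserts $\Delta[\widetilde{\mathbf{Q}}(z)] \sim \widetilde{\mathbf{R}}(z)$ in the sense of Definition~\ref{def:equiv} for $n\times n$ matrices, where $\widetilde{R}_n(z) = \diag(\widetilde{R}_n^{(i)}(z))$ with $\widetilde{R}_n^{(i)}(z) = -\big(z(1+\tfrac{1}{n}\Tr[D_i Q_p(z)])\big)^{-1}$. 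Fix $z\in\mathcal{D}$ and a deterministic family $(U_p)$ of diagonal matrices with $\sup_p\max_j|U_p^{(j)}| \le K$; the goal is to show $\tfrac{1}{p}\Tr[(R_p(z)-T_p(z))U_p]\to 0$ almost surely.

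First I would record the elementary identities, valid pointwise,
\[
R_p^{(j)} - T_p^{(j)} = z\,R_p^{(j)}T_p^{(j)}\cdot\tfrac{1}{n}\Tr\big[\widetilde{D}_j(\widetilde{Q}_n - \widetilde{T}_n)\big], \qquad \widetilde{R}_n^{(i)} - \widetilde{T}_n^{(i)} = z\,\widetilde{R}_n^{(i)}\widetilde{T}_n^{(i)}\cdot\tfrac{1}{n}\Tr\big[D_i(Q_p - T_p)\big],
\]
which follow from the definitions of $R_p$, $\widetilde{R}_n$ and the fixed-point equations for $T_p$, $\widetilde{T}_n$, using $1+\tfrac{1}{n}\Tr[\widetilde{D}_j\widetilde{Q}_n] = -1/(zR_p^{(j)})$, $1+\tfrac{1}{n}\Tr[\widetilde{D}_j\widetilde{T}_n] = -1/(zT_p^{(j)})$ and their companion analogues. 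Inserting $\widetilde{Q}_n - \widetilde{T}_n = (\widetilde{Q}_n - \widetilde{R}_n) + (\widetilde{R}_n - \widetilde{T}_n)$ into the first identity, then $Q_p - T_p = (Q_p - R_p) + (R_p - T_p)$ into the second, summing against $U_p$ and reorganising the double sum over $(i,j)$, I obtain a self-referential decomposition
\[
\tfrac{1}{p}\Tr[(R_p - T_p)U_p] = \mathcal{E}_p + \tfrac{1}{p}\Tr[(R_p - T_p)V_p],
\]
where $V_p = \diag(V_1,\dots,V_p)$ is a random diagonal matrix equal to a double average over $i,j$ of the prefactors $u_j z R_p^{(j)}T_p^{(j)}$ and $z\widetilde{R}_n^{(i)}\widetilde{T}_n^{(i)}$ weighted by $\gamma_{ij}^2\gamma_{ik}^2$, and $\mathcal{E}_p$ collects the two ``cross'' contributions carrying the factors $\tfrac{1}{n}\Tr[\widetilde{D}_j(\widetilde{Q}_n-\widetilde{R}_n)]$ and $\tfrac{1}{n}\Tr[D_i(Q_p-R_p)]$.

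Two estimates then finish the argument. On $\mathcal{D}$, the conditions $|z|/|\Im(z)| < 2$ and $|\Im(z)| > 4d\gamma_{\mathrm{max}}^2$ (with $d$ the aspect-ratio bound entering the definition of $\mathcal{D}$), together with $\|Q_p\|,\|\widetilde{Q}_n\|\le 1/|\Im(z)|$ and the Stieltjes-transform bound $|T_p^{(j)}(z)|,|\widetilde{T}_n^{(i)}(z)|\le 1/|\Im(z)|$ valid for solutions of~\eqref{eq:Dyson}, give $|\tfrac{1}{n}\Tr[\widetilde{D}_j\widetilde{Q}_n]|, |\tfrac{1}{n}\Tr[D_iQ_p]| < \tfrac14$ and hence $|zR_p^{(j)}|, |zT_p^{(j)}|, |z\widetilde{R}_n^{(i)}|, |z\widetilde{T}_n^{(i)}| \le \tfrac{4}{3}$; consequently $|\mathcal{E}_p|$ is bounded by an absolute constant times $\eta_p := \tfrac{1}{p}\sum_j\big|\tfrac{1}{n}\Tr[\widetilde{D}_j(\widetilde{Q}_n-\widetilde{R}_n)]\big|$ plus $\eta_p' := \tfrac{1}{n}\sum_i\big|\tfrac{1}{n}\Tr[D_i(Q_p-R_p)]\big|$, and since $\widetilde{D}_j,D_i$ are \emph{deterministic} with entries bounded by $\gamma_{\mathrm{max}}^2$, the $L^{2+\delta/2}$ bound underlying Lemma~\ref{lem:Q-R} applies uniformly in $i,j$, so a power-mean inequality with Markov's inequality and Borel--Cantelli (as in the proof of Lemma~\ref{lem:Q-R}) yield $\eta_p\to 0$ and $\eta_p'\to 0$ almost surely. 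Separately, the same prefactor bounds give $\max_k|V_k| \le \theta K$ with $\theta$ of order $d\gamma_{\mathrm{max}}^4/|\Im(z)|^2$ up to an absolute constant, and the inequality $|\Im(z)| > 4d\gamma_{\mathrm{max}}^2$ is precisely what forces $\theta < 1$. Iterating the displayed identity $m$ times gives $\tfrac{1}{p}\Tr[(R_p-T_p)U_p] = \sum_{\ell=0}^{m-1}\mathcal{E}_p^{(\ell)} + \tfrac{1}{p}\Tr[(R_p-T_p)V_p^{(m)}]$ with $\max_k|V_k^{(m)}| \le \theta^m K$ and $|\mathcal{E}_p^{(\ell)}| \le c\,\theta^\ell(\eta_p + \eta_p')$; since $|R_p^{(j)}|,|T_p^{(j)}| \le 4/(3|z|)$ on $\mathcal{D}$, the remainder is $\le C\theta^m K \to 0$ as $m\to\infty$, while $\sum_{\ell\ge 0}|\mathcal{E}_p^{(\ell)}| \le c(\eta_p+\eta_p')/(1-\theta) \to 0$ almost surely. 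Letting $m\to\infty$ gives $\mathbf{R}(z)\sim\mathbf{T}(z)$ on $\mathcal{D}$.

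The main obstacle is precisely that the weight matrices $V_p$, and the coefficients multiplying $\widetilde{Q}_n-\widetilde{R}_n$ inside $\mathcal{E}_p$, are \emph{random}, being built from $R_p$, $T_p$ and hence from $\widetilde{Q}_n$, so Lemma~\ref{lem:Q-R} cannot be invoked with $U_p$ replaced by one of them. The resolution is the decoupling used above: only the deterministic profile matrices $\widetilde{D}_j$, $D_i$ are ever fed into Lemma~\ref{lem:Q-R}, all remaining randomness being absorbed into operator-norm bounds uniform on $\mathcal{D}$; and one controls the $\ell^1$-type averages $\eta_p,\eta_p'$ almost surely, rather than only the signed traces, which is possible because the moment bound in Lemma~\ref{lem:Q-R} is uniform in the index and summable in $n$. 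The remaining, more routine, point is to keep the absolute constants explicit enough that the contraction ratio $\theta$ is strictly below $1$ on all of $\mathcal{D}$; this follows the corresponding estimates in \cite{HLJ07}.
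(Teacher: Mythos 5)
Your argument is correct in substance but follows a genuinely different route from the paper. The paper's proof is essentially a citation: it writes $R_p^{(i)}(z)-T_p^{(i)}(z)=R_p^{(i)}(z)T_p^{(i)}(z)\bigl(1/R_p^{(i)}(z)-1/T_p^{(i)}(z)\bigr)$, bounds $|R_p^{(i)}(z)T_p^{(i)}(z)|\le |\Im(z)|^{-2}$ via \cite{HLJ07}[Proposition 5.1], invokes the quantitative stability estimate \cite{HLJ07}[Equation (6.15)], namely $\sup_i\mathbb{E}\bigl[|1/R_p^{(i)}(z)-1/T_p^{(i)}(z)|^{2+\delta/2}\bigr]\le K_2\,n^{-1-\delta/4}$, and concludes by the same summability and Borel--Cantelli argument as in Lemma \ref{lem:Q-R}. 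You instead re-derive that stability from scratch: your exact identities for $R_p^{(j)}-T_p^{(j)}$ and $\widetilde R_n^{(i)}-\widetilde T_n^{(i)}$, the splitting through $\widetilde Q_n-\widetilde R_n$ and $Q_p-R_p$, and the contraction with ratio $\theta<1$ on $\mathcal D$ amount to inlining the proof of the cited estimate, while the almost-sure vanishing of the error averages $\eta_p,\eta_p'$ is pulled back to the moment bound underlying Lemma \ref{lem:Q-R} applied with the deterministic test matrices $D_i,\widetilde D_j$ together with its transposed companion for $\widetilde Q_n$. What your route buys is a self-contained explanation of why the domain $\mathcal D$ enters (the conditions $|z|/|\Im(z)|<2$ and $|\Im(z)|>4d\gamma_{\mathrm{max}}^2$ are exactly what force $\theta<1$) and a pathwise bound $\bigl|\tfrac1p\Tr[(R_p(z)-T_p(z))U_p]\bigr|\le c\,(\eta_p+\eta_p')/(1-\theta)$; you also rightly work with $\ell^1$-type averages rather than sup-norms, which the available uniform moment bounds cannot control almost surely. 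What the paper's route buys is brevity, at the price of leaning on a specific display of \cite{HLJ07}. Two points you should make explicit if you keep your version: first, the companion equivalence $\Delta[\widetilde{\mathbf Q}(z)]\sim\widetilde{\mathbf R}(z)$ and its uniform $L^{2+\delta/2}$ bound are not literally Lemma \ref{lem:Q-R}; they do hold in \cite{HLJ07}, where the system for $Q_p$ and $\widetilde Q_n$ is treated jointly with the same $1/n$ normalization, but they must be stated as an input. Second, your numerical bounds (e.g.\ $|\tfrac1n\Tr[\widetilde D_j\widetilde Q_n(z)]|<1/4$ and $\theta<1$) implicitly use that the constant $d$ in the definition of $\mathcal D$ dominates both $1$ and the aspect ratio $p/n$; this matches its role in \cite{HLJ07}, but since the present paper never defines $d$, the dependence should be spelled out.
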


\begin{proof}[Proof of \cref{lem:T-R}]
Since $T_p(z)$ and $R_p(z)$ are diagonal matrices, the following equations hold true $
\mathrm{Tr}[(R_p(z)-T_p(z))U_p] = \sum_{i=1}^p (R_p^{(i)}(z)-T_{p}^{(i)}(z))U_p^{(i)}$,
 where $(U_p)_{p \geq 1}$ denotes a family of deterministic matrices satisfying \cref{eq:diag_family}, and
$$(R_p^{(i)}(z)-T_{p}^{(i)}(z)) = R_p^{(i)}(z)T_{p}^{(i)}(z)(1/ R_p^{(i)}(z)-1/T_{p}^{(i)}(z)).$$
We know from  [2][Proposition 5.1], that $|R_p^{(i)}(z)T_{p}^{(i)}(z)| \leq \frac{1}{|\Im(z)|^2}$. \\Moreover [2][Equation (6.15)] states that
$
    \sup\limits_{1\leq i\leq p}\mathbb{E}[|(1/ R_p^{(i)}(z)-1/T_{p}^{(i)}(z))|^{2+\delta/2}] \leq \frac{K_2}{n^{1+\delta/4}}.
$
 Therefore, we obtain from these inequalities and \cref{eq:diag_family} that
 \begin{eqnarray*}
\mathbb{E}\left[ \sum_{p\geq 1} \left| \frac{1}{p}\Tr[(T_p(z)-R_p(z))U_p] \right|^{2+\delta/2}\right] &\leq&  \sum_{p\geq 1}  \frac{1}{p}\sum_{i =1}^p |U_p^{(i)}|^{2+\delta / 2} \mathbb{E}\left[|T_{p}^{(i)}(z) - R_p^{(i)}(z)|^{2+\delta / 2}\right]\\
&\leq&  \sum_{p\geq 1}\frac{K^{2+\delta/2}K_2}{|\Im (z)|^2p^{1+\delta/4}}
\leq \sum_{p\geq 1} \frac{\widetilde K}{p^{1+\delta/4}}
 <  + \infty, 
\end{eqnarray*}
 with $\widetilde{K} = \frac{K^{2+\delta/2}K_2}{16d^2\gamma_{max}^4}$, since $z \in \mathcal{D}$.
 Therefore, arguing as in the  proof of \cref{lem:Q-R}, we obtain that 
 $
 \mathbb{P}\left[  \frac{1}{p}\Tr[(T_p(z)-R_p(z))U_p]   \xrightarrow[n\rightarrow +\infty]{} 0 \right]  = 1,
 $
and this concludes the proof of \cref{lem:T-R}.
\end{proof}

\begin{proof}[Proof of \cref{theoremRMT}]
    Since the equivalence relation introduced in \cref{def:equiv} is transitive, we deduce from \cref{lem:Q-R,lem:T-R} that$
        \frac{1}{p} \Tr[Q_p(z)U_p] \sim \frac{1}{p} \Tr[T_p(z)U_p]$,
    for $z\in \mathcal{D}$.
    It remains to prove that this equivalence is true for $z\in \mathbb C \setminus \mathbb R^+$. To this end, let us define the  function
    $f_p(z) =\frac{1}{p} \Tr[Q_p(z)U_p] - \frac{1}{p} \Tr[T_p(z)U_p]$ for $z\in \mathbb C \setminus \mathbb R^+$. Proving $
        \frac{1}{p} \Tr[Q_p(z)U_p] \sim \frac{1}{p} \Tr[T_p(z)U_p]$ is equivalent to prove that $f_p(z)$ converges uniformly to zero. This function is analytic on $z\in \mathbb C \setminus \mathbb R^+$ since $Q_p(z)$ and $T_p(z)$ are analytic \cite{HLJ07}[Proposition 5.1]. For $1\leq i \leq p$, $T_p^{(i)} (z)$ is a Stieltjes transforms \cite{HLJ07}[Theorem 2.4], then we can state that $|T_p^{(i)}(z)|\leq \frac{1}{\mathrm{dist}(z,\mathbb{R}^+)}$ with $z \in \mathbb C \setminus \mathbb R^+$, according to \cite{HLJ07}[Proposition 2.2]. Moreover, we deduce from \cite{HLJ07}[Proposition 2.3] and \cite{hislop2012introduction}[Theorem 5.8] that $|Q_{p}^{(i)}(z)| \leq \parallel Q_p(z) \parallel_{sp} \leq \frac{1}{\mathrm{dist}(z,\mathbb{R}^+)}$, where $\parallel . \parallel_{sp}$ is the spectral norm, $Q_{p}^{(i)}(z)$ denotes the i-th diagonal entry of $Q_p(z)$ and $z \in \mathbb C \setminus \mathbb R^+$.
    Thus, using \cref{eq:diag_family}, we have proved   that
    \begin{equation}\label{ineq:QT-U}
        |\frac{1}{p} \Tr[Q_p(z)U_p]| \leq \frac{K}{\mathrm{dist}(z,\mathbb{R}^+)} \quad \mathrm{and} \quad |\frac{1}{p} \Tr[T_p(z)U_p]| \leq \frac{K}{\mathrm{dist}(z,\mathbb{R}^+)}.
    \end{equation}
 Hence, for each compact subset $C \subset \mathbb C \setminus \mathbb R^+$, $f_p$ is uniformly bounded on $C$, that is $|f_p(z)| \leq \frac{2K}{\delta_C}$,
    where $\delta_C$ is the distance between $C$ and $\mathbb R^+$. Then, by the normal family Theorem \cite{rudin1987real}[Theorem 14.6] there exists a sub-sequence $f_{p_k}$ which uniformly converges  to $f^\ast$ that is an analytical function on $\mathbb C \setminus \mathbb R^+$.
    Let $(z_k)_{k\in\mathbb N}$ be a sequence with an accumulation point in $\mathcal{D}$. Then, for each $k$, $f_p(z_k) \rightarrow 0$ with probability one as $p$ tends to $+\infty$. This implies that $f^\ast (z_k) = 0$ for each $k$. We finally obtain that $f^\ast$ is identically zero on $\mathbb C \setminus \mathbb R^+$. Therefore $f_p$ converges uniformly to zero which proves that $
        \frac{1}{p} \Tr[Q_p(z)U_p] \sim \frac{1}{p} \Tr[T_p(z)U_p]$ for $z \in \mathbb C \setminus \mathbb{R}^+$.

    Now, it remains to prove that$
        \frac{1}{p} \Tr[Q_p'(z)U_p] \sim \frac{1}{p} \Tr[T'(z)U_p]$.
    The functions $Q_p(z)$ and $T_p(z)$ are analytic on  $\mathbb C \setminus \mathbb R^+$ (see \cite{hislop2012introduction}[Theorem 1.2] and \cite{HLJ07}[Proposition 5.1]), then the Cauchy integral formula yields that 
    \begin{eqnarray*}
T_p'(z) = \frac{1}{2\pi i} \int_\rho \frac{T_p(w)}{(w-z)^2} dw \quad \mathrm{and} \quad  Q_p'(z) = \frac{1}{2\pi i} \int_\rho \frac{Q_p(w)}{(w-z)^2} dw,
\end{eqnarray*}
    where $\rho$ is a path around $z$ in $\mathbb C \setminus \mathbb R^+$.
Hence, we have that 
    \begin{eqnarray*}
        \frac{1}{p} \Tr[Q_p'(z)U_p] - \frac{1}{p} \Tr[T_p'(z)U_p] &=& \frac{1}{p}\mathrm{Tr}\Bigg[\int_\rho \frac{Q_p(w)-T_p(w)}{(w-z)^2} dw \ U_p\Bigg]= \int_\rho \frac{1}{p}\frac{\mathrm{Tr}[(Q_p(w)-T_p(w))U_p]}{(w-z)^2} dw.
    \end{eqnarray*} 
Since we have proved that $
        \frac{1}{p} \Tr[Q_p(z)U_p] \sim \frac{1}{p} \Tr[T_p(z)U_p]$ holds on $\mathbb C \setminus \mathbb R^+$, we obtain that for all $w\in \rho$ that $
\lim_{n \to \infty, \; p/n \to c}\frac{1}{p}\mathrm{Tr}[(Q_p(w)-T_p(w))U_p] = 0.
$
 Moreover, for $w\in \mathbb C \setminus \mathbb R^+$, we have from Equation \cref{ineq:QT-U} that
$
     \frac{1}{p}\frac{|\mathrm{Tr}[(Q_p(w)-T_p(w))U_p]|}{|w-z|^2}
     \leq \frac{2K}{\mathrm{dist}(z,\mathbb{R}^+)|w-z|^2}
     \leq \frac{2K}{\mathrm{dist}(\rho,\mathbb{R}^+)|w-z|^2}.
$ 
 Since $\frac{2K}{\mathrm{dist}(\rho,\mathbb{R}^+)|w-z|^2}$ is integrable on $\rho$, using the dominated convergence theorem, we obtain that $\frac{1}{p} \Tr[Q_p'(z)U_p] \sim \frac{1}{p} \Tr[T'(z)U_p]$ holds for $z \in \mathbb C \setminus \mathbb R^+$. 
 
 Moreover, one can use the same proof to demonstrate that $
        \frac{1}{n} \Tr[\widetilde Q_n(z)U_n] \sim \frac{1}{n} \Tr[\widetilde T_n(z)U_n]$ and $\frac{1}{n} \Tr[\widetilde Q_n'(z)U_n] \sim \frac{1}{n} \Tr[\widetilde T_n'(z)U_n]$ for $z \in \mathbb C \setminus \mathbb R^+$ and $U_n \in\mathbb R^{n\times n}$ determinsstic, which completes the proof of  \cref{theoremRMT}.
\end{proof}

\subsection{Proof of the main results} \label{sec:mainproofs}

We have now all the ingredients needed to prove \cref{theoremStat}.

\begin{proof}[Proof of \cref{theoremStat}]
    Since \cref{hyp:var1} hold true for the variance profile $\widetilde{S}_p$ and $\Upsilon_n$, it follows that $(\widetilde{S}_p)_{p\geq 1}$ and $(\widehat{\Sigma}_n)_{n\geq 1}$ satisfy \cref{eq:diag_family}. Moreover the family of identity matrices, namely $(I_p)_{p\geq 1}$, also satisfies \cref{eq:diag_family} thus, we deduce from \cref{theoremRMT} that
\begin{equation}\label{eq:equiv_proof-1}
\frac{1}{p} \Tr[Q_p(z)\widetilde{S}_p] \sim \frac{1}{p} \Tr[T_p(z)\widetilde{S}_p]  \mbox{ , }  \frac{1}{p} \Tr[Q_p'(z)\widetilde{S}_p] \sim \frac{1}{p} \Tr[T'(z)\widetilde{S}_p],
\end{equation}
\begin{equation}\label{eq:equiv_proof-2}
\frac{1}{p} \Tr[Q_p'(z)\widehat{\Sigma}_n] \sim \frac{1}{p} \Tr[T_p'(z)\widehat{\Sigma}_n] \mbox{ , } \frac{1}{p} \Tr[Q_p'(z)] \sim \frac{1}{p} \Tr[T'(z)],  
\end{equation}
\begin{equation}\label{eq:equiv_proof-3}
\frac{1}{n} \Tr[\widetilde Q_n(z)] \sim \frac{1}{n} \Tr[\widetilde T_n(z)] \mbox{ , } \frac{1}{n} \Tr[\widetilde Q_n'(z)] \sim \frac{1}{n} \Tr[\widetilde T_n'(z)].  
\end{equation}
Hence \cref{eq:equiv_det} directly follows from Equations \cref{eq:equiv_proof-1,eq:equiv_proof-2,eq:equiv_proof-3} and \cref{eq:risk}.
\end{proof}

\begin{proof}[Proof of \cref{lem:limzero}]
We assume that \cref{hyp:Z,hyp:var1} hold true in all this proof. According to [2][Theorem 2.4], one has that $T_p(-\lambda) = \int_{\mathbb R^+}\frac{\mu (dw)}{w+\lambda}$ for $\lambda > 0$, where $\mu = (\mu_{ij})$ is a positive $p\times p$ matrix valued measure such that $\mu_{ij} (\mathbb R^+) = \delta_{ij}$.
Moreover, $\lambda\mapsto \frac{1}{w/\lambda+1}$ is differentiable for $w\in \mathbb R^+$, its derivative $w\mapsto \frac{w}{(w+\lambda)^2}$ is measurable  for $\lambda > 0$ and $|\frac{w}{(w+\lambda)^2}| \leq 1$, thus $T_p(-\lambda)$ is differentiable and $
    T_p'(-\lambda) = \int_{\mathbb R^+}\frac{\mu (dw)}{(w+\lambda)^2}$.

As stated in \cref{prop:Stieltjes} $\frac{1}{p}\mathrm{Tr}[T_p(z)]$ is a Stieltjes transform and $\frac{1}{p}\mathrm{Tr}[T_p(z)] = \int_{\mathbb R^+}\frac{\nu_p (dw)}{w+\lambda}$,
where there exists $\pi_\ast \in [0,1]$ and $\pi : (0,+\infty) \rightarrow [0,+\infty )$ a locally Hölder-continuous function such that  $\nu_p (dw) = \pi_\ast \delta_0(dw) + \pi (dw) \mathbf{1}_{w>0}dw$ [1][Theorem 2.1].

Under the additional \cref{hyp:var2,hyp:dim_rec}, [1][Theorem 2.9] asserts that there exists $\tau > 0$ such that $\pi ((0,\tau]) = 0$ and if $p < n$ then $\pi_\ast = 0$.
Since $T_p(-\lambda) = \int_{\mathbb R^+}\frac{\mu (dw)}{w+\lambda}$ for $\lambda > 0$, one get that 
$\int_0^\tau \frac{\frac{1}{p}\mathrm{Tr}(\mu(dw))}{w+\lambda} = \int_0^\tau \frac{\nu_p(dw)}{w+\lambda} =  0$.

Since $\frac{1}{w+\lambda} > 0$ for $\lambda > 0$ and $w\geq 0$, we deduce from the previous equation that $\frac{1}{p} \sum\limits_{i=1}^p \mu_{ii}((0,\tau]) = 0$. Which gives us $\mu_{ii}((0,\tau]) = 0$ for $1\leq i\leq p$ because $(\mu_{ii})_{1\leq i \leq p}$ are positive measures. Hence $
    T_p(-\lambda) = \frac{\pi_\ast}{\lambda} + \int_{\tau}^{+\infty}\frac{\mu (dw)}{w+\lambda}$.

Let's focus on the case $p<n$, we then have 
$T_p(-\lambda) = \int_{\tau}^{+\infty}\frac{\mu (dw)}{w+\lambda}$.

Moreover, $|\frac{1}{w+\lambda}| \leq \frac{1}{\tau}$ for $\lambda >0$ and $w \geq \tau$, thus by the dominated convergence theorem $T_p(-\lambda)$ admits a limit when $\lambda$ tends to $0$,  we denote $T_p(0^-) = \int_{ \tau}^{+\infty} \frac{ \mu(dw)}{w}$ this limit. A similar proof allows us to state that $T_p'(-\lambda)$ admits a limits when $\lambda$ tends to $0$, we denote $T'(0^-) = \int_{ \tau}^{+\infty} \frac{ \mu(dw)}{w^2}$ this limit.
\end{proof}

\begin{proof}[Proof of \cref{CorSto}]
As stated in \Cref{sec:varprofile}, in the case of a quasi doubly stochastic variance profile, $T_p(z)$ is a scalar matrix. In particular, $T_p(z) = m_p(z)I_p$ where $m_p(z)$ is the Stieltjes transform of the Marchenko-Pastur distribution. Moreover, we know from \cref{{prop:Stieltjes}} that $\frac{1}{p}\mathrm{Tr}[T_p(z)] $ is a Stieltjes Transform associated to a measure $\nu_p$. Therefore, $\frac{1}{p}\mathrm{Tr}[T_p(z)] = \int_{\mathbb R^+}\frac{\nu_p (dw)}{w+\lambda}$ and
$
m_p(z) = \int_{\mathbb R^+}\frac{\nu_p (dw)}{w+\lambda}.
$
Additionnally, one has from \cite{alt2017local}[Theorem 2.1] that $\nu_p (dw) = \pi_\ast \delta_0(dw) + \pi (dw) \mathbf{1}_{w>0}dw$. We have seen in the previous proof that there exists $\tau > 0$ such that $\nu_p ((0,\tau]) = 0$ and if $p<n$ then $\pi_\ast = 0$. This implies that $0$ belongs to the domain of definition of $m_p(.)$. Hence $\lim\limits_{\lambda \rightarrow 0}T_p(-\lambda) =\lim\limits_{\lambda \rightarrow 0} m_p(-\lambda) = m_p(0)$ and consequently $T_p(0^-) = m_p(0)$.

We prove symmetrically that if $p>n$ then $0$ belongs to the domain of definition of $\tilde{m}_n(.)$. Moreover, we have seen in \Cref{sec:dof} that in the case of a quasi doubly stochastic variance profile $\kappa (z) = \frac{1}{\tilde{m}_n(z)}I_n$. Note that $\tilde{m}_n(0) = \int_{\tilde{\tau}}^{+\infty}\frac{\tilde\nu_n (dw)}{w}$ is positive. Hence we conclude that $\kappa (0^+) = \frac{1}{\tilde{m}_n(0)}I_n$. We prove $\kappa '(0^+) = \frac{\tilde{m}_n'(0)}{\tilde{m}_n^2(0)}I_n$ the same way.
\end{proof}

\begin{proof}[Proof of \cref{CorStat}]
We assume that \cref{hyp:Z,hyp:var1} hold true in all this proof.
We first focus on the limit with $\lambda \rightarrow +\infty$.
According to [2][Theorem 2.4], $T_p(-\lambda)$ can be expressed as follows for $\lambda > 0$
\begin{eqnarray*}
    T_p(-\lambda) = \int_{\mathbb R^+}\frac{\mu (dw)}{w+\lambda} = \frac{1}{\lambda}\int_{\mathbb R^+}\frac{\mu (dw)}{w/\lambda+1},
\end{eqnarray*}
where $\mu = (\mu_{ij})$ is a $p\times p$ matrix valued measure such that $\mu (\mathbb R^+) = I_p$.
Since  $w\mapsto \frac{1}{w/\lambda+1}$ is a measurable function for $\lambda > 0$,$\lim\limits_{\lambda \rightarrow +\infty} \frac{1}{w/\lambda+1} = 1$ for $w\in \mathbb R^+$ and $|\frac{1}{w/\lambda+1}| \leq 1$, we deduce from the dominated convergence theorem that 
$
\int_{\mathbb R^+}\frac{\mu (dw)}{w/\lambda+1} \xrightarrow[\lambda \rightarrow +\infty]{} I_p.
$
Which finally gives us 
\begin{eqnarray}\label{lim:T-infty}
T_p(-\lambda) \xrightarrow[\lambda \rightarrow +\infty]{} 0_p.
\end{eqnarray}
Moreover, $\lambda\mapsto \frac{1}{w/\lambda+1}$ is differentiable for $w\in \mathbb R^+$, its derivative $w\mapsto \frac{w}{(w+\lambda)^2}$ is measurable  for $\lambda > 0$ and $|\frac{w}{(w+\lambda)^2}| \leq 1$, thus $T_p(-\lambda)$ is differentiable and 
$
    T_p'(-\lambda) = \int_{\mathbb R^+}\frac{\mu (dw)}{(w+\lambda)^2}.
$
We can prove the following limits the same way we proved \cref{lim:T-infty}
\begin{eqnarray}\label{lim:T'-infty}
    \lambda T_p'(-\lambda) \xrightarrow[\lambda \rightarrow +\infty]{} 0_p \quad \mathrm{and} \quad \lambda^2 T_p'(-\lambda) \xrightarrow[\lambda \rightarrow +\infty]{} I_p.
\end{eqnarray}
Hence by combining \cref{lim:T-infty,lim:T'-infty}, we get the limit of the predictive risk for large $\lambda$, 

$
    \lim\limits_{\lambda \rightarrow + \infty}r_\lambda^{test} (X_n) = \frac{\alpha^2}{p}\mathrm{Tr}[\widetilde{S}_p]+\sigma^2.
$
Let's now compute the limit of the predictive risk for small $\lambda$. Considering the definition of $T_p(0^-)$ and $T_p'(0^-)$ given in \cref{lem:limzero}, we directly get the limit of $r_\lambda^{test} (X_n)$ when $p < n$, 
$
    \lim_{\lambda \rightarrow 0, \; \lambda > 0} r_\lambda^{test} (X_n) = \sigma^2 + \frac{\sigma^2}{n}\mathrm{Tr}[\widetilde{S}_p T_p(0^-)].
$
Let's now suppose that $p > n$. We have from \cref{eq:T} that 
\begin{eqnarray*}
    T_{p}^{(j)}(-\lambda) = \frac{\kappa_j(\lambda)}{\lambda(\frac{1}{n}\mathrm{Tr}[\widetilde{D}_j] + \kappa_j(\lambda))}, \quad \mathrm{for } \quad 1\leq j \leq p,
\end{eqnarray*}
with $\kappa_j(\lambda) = \frac{\mathrm{Tr}[\widetilde{D}_j]}{\mathrm{Tr}[\widetilde{D}_j\widetilde{T}_n(-\lambda)]}$. Let's denote $\kappa(\lambda) = \underset{1\leq j \leq p}{\diag (\kappa_j(\lambda))}$, we then have a new expression of $T_p(-\lambda)$
\begin{eqnarray}\label{eq:T-kappa}
    T_p(-\lambda) = \frac{1}{\lambda}\kappa(\lambda)(\Sigma_n + \kappa(\lambda))^{-1}.
\end{eqnarray}
This equation allows us to derive a new formula for the predictive risk
\begin{eqnarray*}
r_\lambda^{test}(X_n) = \sigma^2 +  \frac{\alpha^2}{p}\Tr[\widetilde S_p \kappa (\lambda)(\Sigma_n + \kappa (\lambda))^{-1}] +  \left( \frac{\sigma^2}{n}- \frac{\lambda\alpha^2}{p}  \right) Tr[\kappa '(\lambda)\widetilde S_p\Sigma_n (\Sigma_n + \kappa (\lambda))^{-2}].
\end{eqnarray*}
Moreover, according to \cref{lem:limzero} $\kappa(\lambda)$ and $\kappa'(\lambda)$ admit limits when $\lambda$ tends to $0$. Thus we finish this proof by getting considering the definitions of $\kappa(0^+)$ and $\kappa'(0^+)$ from \cref{lem:limzero}.
\begin{eqnarray*}
    \lim_{\lambda \rightarrow 0, \; \lambda > 0} r_\lambda^{test}(X_n) = \frac{\alpha^2}{p}\mathrm{Tr}[\widetilde{S}_p \kappa (0^+) (\Sigma_n + \kappa (0^+))^{-1}] + \frac{\sigma^2}{n}\mathrm{Tr}[\kappa '(0^+)\widetilde{S}_p \Sigma_n (\Sigma_n + \kappa (0^+))^{-2}] + \sigma^2.
\end{eqnarray*}
\end{proof}

\begin{proof}[Proof of Corollary~\ref{cor:lambda_opt}]

Consider first the function
$$
\hat g(\lambda) = \hat r_\lambda^{test} (X_n) = \sigma^2 + \frac{\sigma^2}{n}\mathrm{Tr}\big[\widetilde{S}_p \Delta[Q_p(-\lambda)]\big] + \lambda\left( \frac{\lambda\alpha^2}{p} -  \frac{\sigma^2}{n} \right) \mathrm{Tr}\big[\widetilde{S}_p  \Delta[Q_p'(-\lambda)]\big].
$$ Since $Q_p(z) = (\widehat \Sigma_n - zI_p)^{-1}$ is the resolvent of the matrix $\widehat \Sigma_n$, one has that $Q'_p(z) = Q_p(z)^2$ and $Q''_p(z) = 2 Q_p(z)^3$. Consequently, the function 
$\hat g$ is differentiable for $\lambda > 0$, and 
we obtain that
$$
    \hat g'(\lambda) = 2 \Big( \frac{\lambda \alpha^2}{p}-\frac{\sigma^2}{n}\Big) \mathrm{Tr}\big[\widetilde{S}_p \Delta[ \widehat\Sigma_n Q_p(\lambda)^3]\big]
$$
using standard differential calculus.
Since the symmetric and positive semi-definite matrices $\widehat\Sigma_n$ and $Q_p(\lambda)^3$ are commuting, it follows that $\widetilde{S}_p \Delta[ \widehat\Sigma_n Q_p(\lambda)^3]$ is also a symmetric and positive semi-definite matrix. Thus $\mathrm{Tr}\big[\widetilde{S}_p \Delta[ \widehat\Sigma_n Q_p(\lambda)^3]\big] \geq 0$ which means that the sign of $\hat g'(\lambda)$ only depends on $\Big( \frac{\lambda \alpha^2}{p}-\frac{\sigma^2}{n}\Big)$.  Indeed, if $\lambda \leq \lambda_\ast$ then $\hat g'(\lambda) \leq 0$, and if $\lambda \geq \lambda_\ast$ then $\hat g'(\lambda) \geq 0$. This proves that $\lambda \mapsto \hat r_\lambda^{test} (X_n)$ attains its minimum at $\lambda = \lambda_\ast = \frac{\sigma^2p}{\alpha^2n}$.

Now, let us consider the function
$
g(\lambda) = r^{\mathrm{test}}_\lambda(X_n).
$
This function is differentiable for all $\lambda > 0$ and can be written as
$$
g(\lambda)
= \sigma^2 + \frac{\sigma^2}{n}\Tr\!\big[\widetilde S_p T_p(-\lambda)\big]
+ \lambda\Big( \frac{\lambda\alpha^2}{p} - \frac{\sigma^2}{n} \Big)
\Tr\!\big[\widetilde S_p T_p'(-\lambda)\big].
$$
By \cite{HLJ07}[Theorem~2.4], there exists a positive matrix-valued measure
$
\mu = (\mu_{ij})_{1 \le i,j \le p}
\quad \text{on } \mathbb R^+,
$
satisfying $\mu(\mathbb R^+) = I_p$, such that
$$
T_p(z) = \int_0^{+\infty} \frac{\mu(dw)}{w  - z}, \mbox{ for } z\in\mathbb C \setminus \mathbb R^+.
$$
Since the mapping $ z \mapsto (w-z)^{-1}$ is differentiable for all $w \in \mathbb R^+$ and its derivatives are uniformly bounded for $ z\in\mathbb C \setminus \mathbb R^+$, differentiation under the integral sign is justified. Consequently,
$$
T_p'(z) = \int_0^{+\infty} \frac{\mu(dw)}{(w-z)^2},
\qquad
T_p''(z) = \int_0^{+\infty} \frac{2\,\mu(dw)}{(w-z)^3}, \mbox{ for } z\in\mathbb C \setminus \mathbb R^+.
$$
Differentiating $g$ yields
\begin{align*}
g'(\lambda)
&= -\frac{\sigma^2}{n}\Tr\!\big[\widetilde S_p T_p'(-\lambda)\big]
+  \Big(2\frac{\lambda\alpha^2}{p} -\frac{\sigma^2}{n}\Big)\Tr\!\big[\widetilde S_p T_p'(-\lambda)\big]
- \lambda\Big( \frac{\lambda\alpha^2}{p} - \frac{\sigma^2}{n} \Big)
\Tr\!\big[\widetilde S_p T_p''(-\lambda)\big] \\
&= 2\Big( \frac{\lambda\alpha^2}{p} - \frac{\sigma^2}{n} \Big)
\Tr\!\Bigg[
\widetilde S_p
\int_0^{+\infty}
\Big( \frac{1}{(w+\lambda)^2} - \frac{\lambda}{(w+\lambda)^3} \Big)
\,\mu(dw)
\Bigg] \\
&= 2\Big( \frac{\lambda\alpha^2}{p} - \frac{\sigma^2}{n} \Big)
\Tr\!\Bigg[
\widetilde S_p
\int_0^{+\infty} \frac{w}{(w+\lambda)^3}\,\mu(dw)
\Bigg].
\end{align*}
Since $\mu(\mathbb R^+) = I_p$, the matrix
$
\int_0^{+\infty} \frac{w}{(w+\lambda)^3}\,\mu(dw)
$
is diagonal with $i$-th diagonal entry
$$
\int_0^{+\infty} \frac{w}{(w+\lambda)^3}\,\mu_{ii}(dw).
$$
As $\mu_{ii}$ is a positive measure and $\frac{w}{(w+\lambda)^3} > 0$ for all $w>0$, each diagonal entry is strictly positive. Hence, the matrix is positive semi-definite.
Since $\widetilde S_p$ is also positive semi-definite, we conclude that
$
\Tr\!\Bigg[
\widetilde S_p
\int_0^{+\infty} \frac{w}{(w+\lambda)^3}\,\mu(dw)
\Bigg] \geq 0.
$
Therefore, the sign of $g'(\lambda)$ depends solely on the factor
$
\frac{\lambda\alpha^2}{p} - \frac{\sigma^2}{n},
$
 and we conclude as before for $\hat g$, that the function $g(\lambda) = r^{\mathrm{test}}_\lambda(X_n)$ has a unique minimizer at $\lambda_\ast$.
This concludes the proof of Corollary~\ref{cor:lambda_opt}.
\end{proof}

\subsection{Random matrices with a variance profile in free probability}

We conclude this section by relating the computation of $T_p(z)$ to the notion of $\mathcal{R}$-transform in operator-valued free probability. The study of the spectral distribution of a generalized Wigner matrix (that is with a variance profile) dates back to  \cite{SHL96}, where, using Voiculescu's notion of asymptotic freeness \cite{VDN92}, Shlyakhtenko proved that independent generalized Wigner matrices  are asymptotically free with amalgamation over the diagonal. This property  has then further been studied in \cite{ACDGM17,male2020traffic} for independent permutation invariant matrices with variance profiles.

More formally, let $\mathbb{D}_p(\CC)^+$ (resp.\ $\mathbb{D}_p(\CC)^-$) denotes the set of diagonal matrices $\mathcal{Z}$ of size $p$ with diagonal complex entries having positive (resp.\ negative) imaginary parts. Recall that, for any square matrix $A$, we denote by $\Delta[A]$ the diagonal matrix whose diagonal entries are those of $A$. The operator-valued Stieltjes transform $\mathcal{G}_{A}$ of a Hermitian matrix $A$ is then defined as the map 
\begin{equation} \label{eq:opStieltjes}
	\begin{array}{cccc} 
		\mathcal{G}_{A}: & \mathbb{D}_p(\CC)^- & \to & \mathbb{D}_p(\CC)^+\\
		& \mathcal{Z} & \mapsto & \Delta\big[ (A - \mathcal{Z})^{-1} \big],\end{array} 
\end{equation}
and it is sometimes defined as $-\mathcal G_A$. 
The operator-valued $\mathcal R$-transform $\mathcal R_A$ of a Hermitian matrix $A$ is the unique analytic map satisfying, 
\begin{equation}\label{DefRTransform}
\mathcal{G}_{A}( \mathcal{Z}) = \Big(  -\mathcal{Z} + \mathcal{R}_{A}\big(- \mathcal{G}_{A}( \mathcal{Z}) \big) \Big)^{-1},
\end{equation}
for all $\mathcal Z$ in $\mathbb{D}_p(\CC)^-$ whose diagonal entries have imaginary parts large enough in absolute value. An efficient way to produce a deterministic equivalent for a random matrix $A$ is then to find a simple approximation $\mathcal{R}_{A}^\square$ of $\mathcal{R}_{A}$. In this manner, one can then approximate the operator-valued Stieltjes transform of $A$ by the solution of the fixed point equation \cref{DefRTransform} where $\mathcal{R}_{A}$ is replaced by $\mathcal{R}_{A}^\square$. This method also allows us to compute the spectrum of perturbations of $A$ by independent matrices, see for instance \cite{bigotmale}.

For a generalized Wigner matrix $W_p$ of size $p \times p$ with a symmetric variance profile $\Gamma_p$, Shlyakhtenko proves \cite{SHL96} that a good approximation of $\mathcal{R}_{W_p}$ is the deterministic linear map 
$
\mathcal{R}_{W_p}^\square ( \mathcal{Z}) = \mathrm{deg}\left( \frac{1}{p}\Gamma_p \mathcal{Z} \right),  \quad \mbox{for all} \quad  \mathcal{Z} \in \mathbb{D}_p(\CC)^-,
$
where  for a matrix $A$, we denote by $\mathrm{deg}(A)$ the diagonal matrix, whose $k$-diagonal element is the sum of the entries of the $k$-row of $A$.

To the best of our knowledge, apart from generalized Wigner matrices, there does not exist any other class of random matrices for which a simple approximation of the diagonal-valued $\mathcal R$-transform is known yet. Nevertheless, we ca now remark that, for  $\mathcal{Z} = z  I_p$ with $z \in \CC \setminus \RR^+$, the diagonal matrix $T_{p}(z)$ solution of the Dyson equation \cref{eq:Dyson} can be written as satisfying the fixed-point equation
$
T_{p}(z) = \left(- z I_p + \mathcal{R}_{\Sigma_n}^\square(-T_{p}(z))  \right)^{-1},
$
where $\mathcal{R}_{\Sigma_n}^\square$ is the non-linear map
$
\mathcal{R}_{\Sigma_n}^\square(\mathcal{Z}) = \mathrm{deg}\left(  \frac{1}{n}\Gamma_n^\top \left[I_n - \mathrm{deg}\left( \frac{1}{n}\Gamma_n \mathcal{Z} \right) \right]^{-1}\right),  \quad \mbox{for all} \quad  \mathcal{Z} \in \mathbb{D}_p(\CC)^-.
$
This suggests that $\mathcal{R}_{\Sigma_n}^\square$ is indeed a simple approximation of the operator-valued $\mathcal R$-transform of $\Sigma_n$.

\section*{Acknowledgments}
This work was supported by ANR-STARS funded project.

\bibliographystyle{alpha}
\bibliography{RidgeRegression_VarianceProfile-Final}

\newcommand{\etalchar}[1]{$^{#1}$}
\begin{thebibliography}{TvWM{\etalchar{+}}13}

\bibitem[ACD{\etalchar{+}}21]{ACDGM17}
Benson Au, Guillaume C{\'e}bron, Antoine Dahlqvist, Franck Gabriel, and Camille
  Male.
\newblock {Freeness over the diagonal for large random matrices}.
\newblock {\em The Annals of Probability}, 49(1):157 -- 179, 2021.

\bibitem[AEK17a]{Ajanki2017}
Oskari~H. Ajanki, L{\'a}szl{\'o} Erd{\H{o}}s, and Torben Kr{\"u}ger.
\newblock Universality for general {W}igner-type matrices.
\newblock {\em Probability Theory and Related Fields}, 169(3):667--727, 2017.

\bibitem[AEK17b]{alt2017local}
Johannes Alt, L{\'a}szl{\'o} Erd{\H{o}}s, and Torben Kr{\"u}ger.
\newblock Local law for random {G}ram matrices.
\newblock {\em Electron. J. Probab.}, 22:41 pp., 2017.

\bibitem[AEK18]{alt2018}
Johannes Alt, L{\'a}szl{\'o} Erd{\H{o}}s, and Torben Kr{\"u}ger.
\newblock Local inhomogeneous circular law.
\newblock {\em Ann. Appl. Probab.}, 28(1):148--203, 02 2018.

\bibitem[AEK19]{Ajanki2019}
Oskari~H. Ajanki, L{\'a}szl{\'o} Erd{\H{o}}s, and Torben Kr{\"u}ger.
\newblock Stability of the matrix {D}yson equation and random matrices with
  correlations.
\newblock {\em Probability Theory and Related Fields}, 173(1):293--373, 2019.

\bibitem[AGB{\etalchar{+}}15]{allesina2015predicting}
Stefano Allesina, Jacopo Grilli, Gy{\"o}rgy Barab{\'a}s, Si~Tang, Johnatan
  Aljadeff, and Amos Maritan.
\newblock Predicting the stability of large structured food webs.
\newblock {\em Nature communications}, 6(1):7842, 2015.

\bibitem[ARS15]{aljadeff2015eigenvalues}
Johnatan Aljadeff, David Renfrew, and Merav Stern.
\newblock Eigenvalues of block structured asymmetric random matrices.
\newblock {\em Journal of Mathematical Physics}, 56(10), 2015.

\bibitem[ASS15]{aljadeff2015transition}
Johnatan Aljadeff, Merav Stern, and Tatyana Sharpee.
\newblock Transition to chaos in random networks with cell-type-specific
  connectivity.
\newblock {\em Physical review letters}, 114(8):088101, 2015.

\bibitem[AT15]{allesina2015stability}
Stefano Allesina and Si~Tang.
\newblock The stability--complexity relationship at age 40: a random matrix
  perspective.
\newblock {\em Population Ecology}, 57(1):63--75, 2015.

\bibitem[AZVP24]{atanasov2024risk}
Alexander Atanasov, Jacob~A Zavatone-Veth, and Cengiz Pehlevan.
\newblock Risk and cross validation in ridge regression with correlated
  samples.
\newblock {\em arXiv preprint arXiv:2408.04607}, 2024.

\bibitem[Bac24]{bach2023highdimensional}
Francis Bach.
\newblock High-dimensional analysis of double descent for linear regression
  with random projections.
\newblock {\em SIAM Journal on Mathematics of Data Science}, 6(1):26--50, 2024.

\bibitem[BBK{\etalchar{+}}19]{18-STS694}
Andreas Buja, Lawrence Brown, Arun~Kumar Kuchibhotla, Richard Berk, Edward
  George, and Linda Zhao.
\newblock {Models as Approximations II: A Model-Free Theory of Parametric
  Regression}.
\newblock {\em Statistical Science}, 34(4):545 -- 565, 2019.

\bibitem[BDF17]{Bigot17}
J.~Bigot, C.~Deledalle, and D.~F{{\'e}}ral.
\newblock Generalized sure for optimal shrinkage of singular values in low-rank
  matrix denoising.
\newblock {\em Journal of Machine Learning Research}, 18(137):1--50, 2017.

\bibitem[Ber82]{beran}
Rudolf Beran.
\newblock {Robust Estimation in Models for Independent Non-Identically
  Distributed Data}.
\newblock {\em The Annals of Statistics}, 10(2):415 -- 428, 1982.

\bibitem[BGC16]{benaych2016spectral}
Florent Benaych-Georges and Romain Couillet.
\newblock Spectral analysis of the gram matrix of mixture models.
\newblock {\em ESAIM: Probability and Statistics}, 20:217--237, 2016.

\bibitem[BHMM19]{Belkin19}
Mikhail Belkin, Daniel Hsu, Siyuan Ma, and Soumik Mandal.
\newblock Reconciling modern machine-learning practice and the classical
  bias–variance trade-off.
\newblock {\em Proceedings of the National Academy of Sciences},
  116(32):15849--15854, 2019.

\bibitem[BHX20]{belkin2020two}
Mikhail Belkin, Daniel Hsu, and Ji~Xu.
\newblock Two models of double descent for weak features.
\newblock {\em SIAM Journal on Mathematics of Data Science}, 2(4):1167--1180,
  2020.

\bibitem[BHX23]{bao2023leaveoneout}
Zhigang Bao, Qiyang Han, and Xiaocong Xu.
\newblock A leave-one-out approach to approximate message passing, 2023.

\bibitem[BM20]{bigotmale}
Jérémie Bigot and Camille Male.
\newblock {Freeness over the diagonal and outliers detection in deformed random
  matrices with a variance profile}.
\newblock {\em Information and Inference: A Journal of the IMA},
  10(3):863--919, 07 2020.

\bibitem[BMG13]{Bazerque2013}
{Juan Andres} Bazerque, Gonzalo Mateos, and {Georgios B.} Giannakis.
\newblock {\em Inference of Poisson count processes using low-rank tensor
  data}, pages 5989--5993.
\newblock ICASSP, IEEE International Conference on Acoustics, Speech and Signal
  Processing - Proceedings, 10 2013.

\bibitem[BS10]{MR2567175}
Zhidong Bai and Jack~W. Silverstein.
\newblock {\em Spectral analysis of large dimensional random matrices}.
\newblock Springer Series in Statistics. Springer, New York, second edition,
  2010.

\bibitem[CD11]{Couillet_Debbah_2011}
Romain Couillet and Mérouane Debbah.
\newblock {\em Random Matrix Methods for Wireless Communications}.
\newblock Cambridge University Press, 2011.

\bibitem[CHNR18]{cook2018}
Nicholas Cook, Walid Hachem, Jamal Najim, and David Renfrew.
\newblock Non-hermitian random matrices with a variance profile (i):
  deterministic equivalents and limiting esds.
\newblock {\em Electron. J. Probab.}, 23:61 pp., 2018.

\bibitem[CZL19]{Cao17}
Yuanpei Cao, Anru Zhang, and Hongzhe Li.
\newblock {Multisample estimation of bacterial composition matrices in
  metagenomics data}.
\newblock {\em Biometrika}, 107(1):75--92, 12 2019.

\bibitem[DW18]{17-AOS1549}
Edgar Dobriban and Stefan Wager.
\newblock {High-dimensional asymptotics of prediction: Ridge regression and
  classification}.
\newblock {\em The Annals of Statistics}, 46(1):247 -- 279, 2018.

\bibitem[Efr04]{Efron04}
Bradley Efron.
\newblock The estimation of prediction error.
\newblock {\em Journal of the American Statistical Association},
  99(467):619--632, 2004.

\bibitem[EK18]{elkaroui}
Noureddine El~Karoui.
\newblock On the impact of predictor geometry on the performance on
  high-dimensional ridge-regularized generalized robust regression estimators.
\newblock {\em Probability Theory and Related Fields}, 170:95--175, 02 2018.

\bibitem[ETP{\etalchar{+}}22]{eyre2022effect}
David~W Eyre, Donald Taylor, Mark Purver, David Chapman, Tom Fowler, Koen~B
  Pouwels, A~Sarah Walker, and Tim~EA Peto.
\newblock Effect of covid-19 vaccination on transmission of alpha and delta
  variants.
\newblock {\em New England Journal of Medicine}, 386(8):744--756, 2022.

\bibitem[EYY12]{erdos2012}
László Erdös, Horng-Tzer Yau, and Jun Yin.
\newblock Bulk universality for generalized {W}igner matrices.
\newblock {\em Probability Theory and Related Fields}, 154(1-2):341--407, 2012.

\bibitem[GHW79]{golub1979generalized}
Gene~H Golub, Michael Heath, and Grace Wahba.
\newblock Generalized cross-validation as a method for choosing a good ridge
  parameter.
\newblock {\em Technometrics}, 21(2):215--223, 1979.

\bibitem[Gro21]{grobys2021we}
Klaus Grobys.
\newblock What do we know about the second moment of financial markets?
\newblock {\em International review of financial analysis}, 78:101891, 2021.

\bibitem[hD16]{dicker}
Lee h.~Dicker.
\newblock Ridge regression and asymptotic minimax estimation over spheres of
  growing dimension.
\newblock {\em Bernoulli}, 22(1):1--37, 2016.

\bibitem[HLN06]{HACHEM2006649}
W.~Hachem, P.~Loubaton, and J.~Najim.
\newblock The empirical distribution of the eigenvalues of a gram matrix with a
  given variance profile.
\newblock {\em Annales de l'Institut Henri Poincare (B) Probability and
  Statistics}, 42(6):649 -- 670, 2006.

\bibitem[HLN07]{HLJ07}
Walid Hachem, Philippe Loubaton, and Jamal Najim.
\newblock {Deterministic equivalents for certain functionals of large random
  matrices}.
\newblock {\em The Annals of Applied Probability}, 17(3):875 -- 930, 2007.

\bibitem[HMRT22]{21-AOS2133}
Trevor Hastie, Andrea Montanari, Saharon Rosset, and Ryan~J. Tibshirani.
\newblock {Surprises in high-dimensional ridgeless least squares
  interpolation}.
\newblock {\em The Annals of Statistics}, 50(2):949 -- 986, 2022.

\bibitem[HS12]{hislop2012introduction}
Peter~D. Hislop and Israel~Michael Sigal.
\newblock {\em Introduction to spectral theory: With applications to
  Schr{\"o}dinger operators}, volume 113.
\newblock Springer Science \& Business Media, 2012.

\bibitem[HTW15]{booksparsity}
Trevor Hastie, Robert Tibshirani, and Martin Wainwright.
\newblock {\em Statistical Learning with Sparsity: The Lasso and
  Generalizations}.
\newblock Chapman \& Hall/CRC, 2015.

\bibitem[JFL22]{Fan19}
Xiao~Han Jianqing~Fan, Yingying~Fan and Jinchi Lv.
\newblock Asymptotic theory of eigenvectors for random matrices with diverging
  spikes.
\newblock {\em Journal of the American Statistical Association},
  117(538):996--1009, 2022.

\bibitem[KBB{\etalchar{+}}20]{19-AOS1917}
Arun~K. Kuchibhotla, Lawrence~D. Brown, Andreas Buja, Junhui Cai, Edward~I.
  George, and Linda~H. Zhao.
\newblock {Valid post-selection inference in model-free linear regression}.
\newblock {\em The Annals of Statistics}, 48(5):2953 -- 2981, 2020.

\bibitem[KSS23]{kausik2023double}
Chinmaya Kausik, Kashvi Srivastava, and Rishi Sonthalia.
\newblock Double descent and overfitting under noisy inputs and distribution
  shift for linear denoisers.
\newblock {\em arXiv preprint arXiv:2305.17297}, 2023.

\bibitem[LC18]{liao2018dynamics}
Zhenyu Liao and Romain Couillet.
\newblock The dynamics of learning: A random matrix approach.
\newblock In {\em International Conference on Machine Learning}, pages
  3072--3081. PMLR, 2018.

\bibitem[LDS18]{liu2018}
Lydia~T. Liu, Edgar Dobriban, and Amit Singer.
\newblock $e$pca: High dimensional exponential family pca.
\newblock {\em Ann. Appl. Stat.}, 12(4):2121--2150, 12 2018.

\bibitem[LHT23]{liu2023linear}
Liangchen Liu, Juncai He, and Richard Tsai.
\newblock Linear regression on manifold structured data: the impact of
  extrinsic geometry on solutions, 2023.

\bibitem[LLS24]{luo2024roti}
Kevin Luo, Yufan Li, and Pragya Sur.
\newblock Roti-gcv: Generalized cross-validation for right-rotationally
  invariant data.
\newblock {\em arXiv preprint arXiv:2406.11666}, 2024.

\bibitem[LS23]{li2023spectrum}
Yufan Li and Pragya Sur.
\newblock Spectrum-aware debiasing: A modern inference framework with
  applications to principal components regression.
\newblock {\em arXiv preprint arXiv:2309.07810}, 2023.

\bibitem[Mal20]{male2020traffic}
Camille Male.
\newblock Traffic distributions and independence: permutation invariant random
  matrices and the three notions of independence.
\newblock {\em Memoirs of the American Mathematical Society}, volume 267,
  number 1300, 2020.

\bibitem[Mec14]{meckes2014concentration}
Elizabeth Meckes.
\newblock Concentration of measure and the compact classical matrix groups.
\newblock {\em Lecture Notes, IAS Program for Women and Mathematics}, 2014.

\bibitem[NPW21]{reviewRMT}
Jamshid Namdari, Debashis Paul, and Lili Wang.
\newblock High-dimensional linear models: A random matrix perspective.
\newblock {\em Sankhya A: The Indian Journal of Statistics}, 83(2):645--695,
  2021.

\bibitem[NWB{\etalchar{+}}20]{nagaraj2020least}
Dheeraj Nagaraj, Xian Wu, Guy Bresler, Prateek Jain, and Praneeth Netrapalli.
\newblock Least squares regression with markovian data: Fundamental limits and
  algorithms.
\newblock {\em Advances in neural information processing systems},
  33:16666--16676, 2020.

\bibitem[PDT24]{patil2024optimal}
Pratik Patil, Jin-Hong Du, and Ryan~J Tibshirani.
\newblock Optimal ridge regularization for out-of-distribution prediction.
\newblock {\em arXiv preprint arXiv:2404.01233}, 2024.

\bibitem[RMR21]{RichardsMR21}
Dominic Richards, Jaouad Mourtada, and Lorenzo Rosasco.
\newblock Asymptotics of ridge(less) regression under general source condition.
\newblock In Arindam Banerjee and Kenji Fukumizu, editors, {\em The 24th
  International Conference on Artificial Intelligence and Statistics, {AISTATS}
  2021, April 13-15, 2021, Virtual Event}, volume 130 of {\em Proceedings of
  Machine Learning Research}, pages 3889--3897. {PMLR}, 2021.

\bibitem[RR22]{rabinowicz2022cross}
Assaf Rabinowicz and Saharon Rosset.
\newblock Cross-validation for correlated data.
\newblock {\em Journal of the American Statistical Association},
  117(538):718--731, 2022.

\bibitem[Rud87]{rudin1987real}
Walter Rudin.
\newblock {\em Real and complex analysis}.
\newblock McGraw-Hill Book Co., New York, third edition, 1987.

\bibitem[SBS24]{song2024generalization}
Yanke Song, Sohom Bhattacharya, and Pragya Sur.
\newblock Generalization error of min-norm interpolators in transfer learning.
\newblock {\em arXiv preprint arXiv:2406.13944}, 2024.

\bibitem[SHDW14]{SalmonHDW14}
Joseph Salmon, Zachary~T. Harmany, Charles{-}Alban Deledalle, and Rebecca
  Willett.
\newblock Poisson noise reduction with non-local {PCA}.
\newblock {\em Journal of Mathematical Imaging and Vision}, 48(2):279--294,
  2014.

\bibitem[Shl96]{SHL96}
Dimitri Shlyakhtenko.
\newblock Random gaussian band matrices and freeness with amalgamation.
\newblock {\em International Mathematics Research Notices},
  1996(20):1013--1025, 1996.

\bibitem[SKR{\etalchar{+}}23]{schaeffer2023double}
Rylan Schaeffer, Mikail Khona, Zachary Robertson, Akhilan Boopathy, Kateryna
  Pistunova, Jason~W Rocks, Ila~Rani Fiete, and Oluwasanmi Koyejo.
\newblock Double descent demystified: Identifying, interpreting \& ablating the
  sources of a deep learning puzzle.
\newblock {\em arXiv preprint arXiv:2303.14151}, 2023.

\bibitem[SN23]{sonthalia2023training}
Rishi Sonthalia and Raj~Rao Nadakuditi.
\newblock Training data size induced double descent for denoising feedforward
  neural networks and the role of training noise.
\newblock {\em Transactions on Machine Learning Research}, 2023.

\bibitem[TV04]{TulinoV04}
Antonia~Maria Tulino and Sergio Verd\'u.
\newblock Random matrix theory and wireless communications.
\newblock {\em Found. Trends Commun. Inf. Theory}, 1(1), 2004.

\bibitem[TvWM{\etalchar{+}}13]{tagliazucchi2013breakdown}
Enzo Tagliazucchi, Frederic von Wegner, Astrid Morzelewski, Verena Brodbeck,
  Kolja Jahnke, and Helmut Laufs.
\newblock Breakdown of long-range temporal dependence in default mode and
  attention networks during deep sleep.
\newblock {\em Proceedings of the National Academy of Sciences},
  110(38):15419--15424, 2013.

\bibitem[TZMP23]{tsiamis2023statistical}
Anastasios Tsiamis, Ingvar Ziemann, Nikolai Matni, and George~J Pappas.
\newblock Statistical learning theory for control: A finite-sample perspective.
\newblock {\em IEEE Control Systems Magazine}, 43(6):67--97, 2023.

\bibitem[UHZB16]{MAL2016}
Madeleine Udell, Corinne Horn, Reza Zadeh, and Stephen Boyd.
\newblock Generalized low rank models.
\newblock {\em Foundations and Trends in Machine Learning}, 9(1):1--118, 2016.

\bibitem[VDN92]{VDN92}
D.~V. Voiculescu, K.~J. Dykema, and A.~Nica.
\newblock {\em Free random variables}, volume~1 of {\em CRM Monograph Series}.
\newblock American Mathematical Society, Providence, RI, 1992.
\newblock A noncommutative probability approach to free products with
  applications to random matrices, operator algebras and harmonic analysis on
  free groups.

\bibitem[WCDS12]{wagner2012large}
Sebastian Wagner, Romain Couillet, M{\'e}rouane Debbah, and Dirk~TM Slock.
\newblock Large system analysis of linear precoding in correlated miso
  broadcast channels under limited feedback.
\newblock {\em IEEE transactions on information theory}, 58(7):4509--4537,
  2012.

\bibitem[YR25]{yuval2025cross}
Oren Yuval and Saharon Rosset.
\newblock Cross validation for correlated data in classification models.
\newblock In {\em The 28th International Conference on Artificial Intelligence
  and Statistics}, 2025.

\bibitem[ZCW22]{Zhang18}
Anru~R. Zhang, T.~Tony Cai, and Yihong Wu.
\newblock {Heteroskedastic PCA: Algorithm, optimality, and applications}.
\newblock {\em The Annals of Statistics}, 50(1):53 -- 80, 2022.

\bibitem[ZJVM24]{zhang2024spectral}
Yihan Zhang, Hong~Chang Ji, Ramji Venkataramanan, and Marco Mondelli.
\newblock Spectral estimators for structured generalized linear models via
  approximate message passing.
\newblock In {\em The Thirty Seventh Annual Conference on Learning Theory},
  pages 5224--5230. PMLR, 2024.

\bibitem[ZM24]{zhang2024matrix}
Yihan Zhang and Marco Mondelli.
\newblock Matrix denoising with doubly heteroscedastic noise: Fundamental
  limits and optimal spectral methods.
\newblock {\em arXiv preprint arXiv:2405.13912}, 2024.

\bibitem[ZMP22]{zhang2022causal}
Chi Zhang, Karthika Mohan, and Judea Pearl.
\newblock Causal inference with non-iid data using linear graphical models.
\newblock {\em Advances in Neural Information Processing Systems},
  35:13214--13225, 2022.

\bibitem[ZNM{\etalchar{+}}21]{zscheischler2021evaluating}
Jakob Zscheischler, Philippe Naveau, Olivia Martius, Sebastian Engelke, and
  Christoph~C Raible.
\newblock Evaluating the dependence structure of compound precipitation and
  wind speed extremes.
\newblock {\em Earth system dynamics}, 12(1):1--16, 2021.

\bibitem[ZTPM24]{ziemann2024noise}
Ingvar Ziemann, Stephen Tu, George~J Pappas, and Nikolai Matni.
\newblock The noise level in linear regression with dependent data.
\newblock {\em Advances in Neural Information Processing Systems}, 36, 2024.

\end{thebibliography}

\end{document}